\documentclass[11pt, a4paper]{amsart}
\usepackage{amsmath}
\usepackage{amssymb}
\usepackage{color}
\usepackage[utf8]{inputenc}

\setlength{\textwidth}{16cm} \setlength{\textheight}{21.5cm}
\setlength{\oddsidemargin}{0.0cm} \setlength{\evensidemargin}{0.0cm}

\parskip 4pt

\newtheorem{prop}{Proposition}[section]
\newtheorem{teo}{Theorem}[section]
\newtheorem{lema}{Lemma}[section]
\newtheorem{coro}{Corollary}[section]

\theoremstyle{definition}

\def\ep{\varepsilon}

\def\a{\mathfrak q}
\def\R{\mathbb R}

\def\X{{\mathcal X}}
\def\H{{\mathcal H}}
\def\l{\lambda}
\def\ul{u^\l}

\def\l{\lambda}

\begin{document}
\title[Two-dimensional nonlocal diffusion
in exterior domains]{Asymptotic behavior for a nonlocal diffusion equation in exterior domains: the critical two-dimensional case}

\author[Cort\'{a}zar, Elgueta, Quir\'{o}s \and Wolanski]{C. Cort\'{a}zar, M. Elgueta, F. Quir\'{o}s \and N. Wolanski}

\address{Carmen Cort\'{a}zar\hfill\break\indent
Departamento  de Matem\'{a}tica, Pontificia Universidad Cat\'{o}lica
de Chile \hfill\break\indent Santiago, Chile.} \email{{\tt
ccortaza@mat.puc.cl} }

\address{Manuel Elgueta\hfill\break\indent
Departamento  de Matem\'{a}tica, Pontificia Universidad Cat\'{o}lica
de Chile \hfill\break\indent Santiago, Chile.} \email{{\tt
melgueta@mat.puc.cl} }

\address{Fernando Quir\'{o}s\hfill\break\indent
Departamento  de Matem\'{a}ticas, Universidad Aut\'{o}noma de Madrid
\hfill\break\indent 28049-Madrid, Spain.} \email{{\tt
fernando.quiros@uam.es} }

\address{Noem\'{\i} Wolanski \hfill\break\indent
Departamento  de Matem\'{a}tica, FCEyN,  UBA,
\hfill\break \indent and
IMAS, CONICET, \hfill\break\indent Ciudad Universitaria, Pab. I,\hfill\break\indent
(1428) Buenos Aires, Argentina.} \email{{\tt wolanski@dm.uba.ar} }

\thanks{All authors supported by  FONDECYT grants 1110074 and 1150028. The third author supported by
the Spanish Project MTM2011-24696. The fourth author supported by
 CONICET PIP625,
Res. 960/12, ANPCyT PICT-2012-0153, UBACYT X117, and MathAmSud 13MATH03. F. Quir\'{o}s and N. Wolanski would like to thank the Isaac Newton Institute for Mathematical Sciences, Cambridge, where part of this work was done during the
program \emph{Free Boundary Problems and Related Topics}.}

\keywords{Nonlocal diffusion, exterior domain, asymptotic behavior,
matched asymptotics.}

\subjclass[2010]{%
35R09, 
45K05, 
45M05. 
}

\date{}

\begin{abstract}
We study the long time behavior of bounded, integrable solutions to a
nonlocal diffusion equation,  $\partial _t u=J*u-u$, where $J$ is a smooth, radially symmetric kernel with support $B_d(0)\subset\R^2$. The problem is set in an exterior  two-dimensional domain which excludes a hole $\H$, and with zero Dirichlet data on $\H$. In the far field scale, $\xi_1\le |x|t^{-1/2}\le \xi_2$ with $\xi_1,\xi_2>0$, the scaled function $\log t\, u(x,t)$ behaves as a multiple of the fundamental solution for the local heat equation with a certain diffusivity determined by $J$. The proportionality constant, which characterizes the first non-trivial term in the asymptotic behavior of the mass,  is given by means of  the asymptotic \lq logarithmic momentum' of the solution, $\lim_{t\to\infty}\int_{\R^2}u(x,t)\log|x|\,dx$. This asymptotic quantity can be easily computed in terms of the initial data. In the near field scale, $|x|\le t^{1/2}h(t)$ with $\lim_{t\to\infty} h(t)=0$,
the scaled function $t(\log t)^2u(x,t)/\log |x|$ converges to a multiple of $\phi(x)/\log |x|$, where $\phi$ is the unique stationary solution of the problem that behaves as $\log|x|$ when $|x|\to\infty$. The proportionality constant is obtained through a matching procedure with the far field limit.  Finally, in the very far field, $|x|\ge t^{1/2} g(t)$ with $g(t)\to\infty$, the solution is proved to be of order $o((t\log t)^{-1})$.
\end{abstract}

\maketitle

\date{}

\section{Introduction}
\label{Intro} \setcounter{equation}{0}

Let  $\mathcal{H}\subset \mathbb{R}^N$ be a non-empty bounded open set, which is assumed, without loss of generality, to satisfy
\begin{equation}
\label{hypotheses.H}
\tag{$H_\H$}
B_2(0)\subset \H\subset B_{\mathcal{R}}(0), \qquad \mathcal{R}\in (2,\infty).
\end{equation}
We do not assume $\H$ to be connected, so it may represent one or several holes in an otherwise homogeneous medium.
Our goal is to describe the long time behavior of solutions to the nonlocal diffusion problem
\begin{equation}\label{problem}
\tag{P}
\begin{cases}
\displaystyle\partial_t u(x,t)=Lu(x,t)\ &\mbox{in } (\R^N\setminus\H)\times\R_+,\\
u(x,t)=0&\mbox{in }\mathcal{H}\times\R_+,\\
u(x,0)=u_0(x)&\mbox{in }\mathbb{R}^N,
\end{cases}
\end{equation}
in the critical two-dimensional case $N=2$, thus completing the study for other dimensions performed by the authors in~\cite{CEQW,CEQW2,CEQW3}. The nonlocal operator $L$ is defined by $Lg:=J*g-g$, where the kernel $J$ is assumed to satisfy
\begin{equation}
\label{hypotheses.J}
\tag{$H_J$}
J\in C^2_{\textrm{c}}(\mathbb{R}^N)\quad \text{radially symmetric},\quad J>0\text{ if }|x|<d,\quad J=0\text{ if } |x|\ge d,  \quad \int_{\R^N} J=1.
\end{equation}
Diffusion models of this kind have been widely used to model the
dispersal of a species taking into account long-range effects
\cite{BZ, CF, Fi}, and also to describe phase transitions \cite{BCh1, BCh2, BChQ} and image enhancement \cite{GO}.

If the initial data $u_0$ are integrable and identically zero in the hole $\H$, problem~\eqref{problem} has a unique solution $u\in C\big([0,\infty);L^1(\R^N)\big)$. This is easily proved using Banach's fixed point theorem; see~\cite{CEQW}. However, in order to prove our asymptotic results for $N=2$, we will need further hypotheses on $u_0$. In the sequel we assume
\begin{equation}
\label{eq:hypotheses.u0.1}
\tag{$H_0$}
u_0\ge 0, \quad u_0=0\text{ in }\H,\quad u_0\in L^\infty(\mathbb{R}^2)\cap L^1(\R^2,\log|x|\,dx),
\end{equation}
plus some extra control of the growth of $u_0$ at infinity,
\begin{equation}
\label{eq:cond.second.momentum}
\tag{$H_1$}
u_0\in L^1(\R^2,|x|^2\,dx).
\end{equation}
An easy modification of the existence proof shows that if~\eqref{eq:hypotheses.u0.1} holds, then
\begin{equation}
\label{eq:log.momentum.L.infty}
u\ge0,\qquad\int_{\mathbb{R}^2} u(x,t)\log|x|\,dx<\infty\quad\text{for every }t>0,\qquad \|u\|_{L^\infty(\R^2\times\R_+)}\le \|u_0\|_{L^\infty(\R^2)}.
\end{equation}
Moreover, if $u_0$ satisfies in addition~\eqref{eq:cond.second.momentum}, then $u(\cdot,t)\in L^1(\R^2,|x|^2\,dx)$ for every $t>0$.

\

\noindent\textsc{Comparison with the case without holes. }
In the absence of holes, $\mathcal{H}=\emptyset$, the  mass $M(t)=\int_{\mathbb{R}^N}u(x,t)\,dx$ is conserved. If the initial data are not only integrable, but also bounded, the solution to~\eqref{problem} behaves for large times as the solution, $v$, to the local heat equation
\begin{equation}
\label{eq:definition.alpha}
\partial_t v=\a\Delta v\quad\text{in }\mathbb{R}^N\times\mathbb{R}_+,\qquad \a:=\frac1{2N}\int_{\R^N}|z|^2J(z)\,dz,
\end{equation}
with initial $v(\cdot,0)=u_0$; see~\cite{CCR, IR1}. More precisely,
\begin{equation*}
\label{eq:asymptotic.CCR}
\lim_{t\to\infty}t^{N/2}\max_{x\in\mathbb{R}^N}|u(x,t)-v(x,t)|=0.
\end{equation*}
Hence, the asymptotic behavior of $u$ can be described in terms of
the fundamental (self-similar) solution of~\eqref{eq:definition.alpha},
\begin{equation}
\label{eq:fundamental.solution}
\Gamma_{\a}(x,t)=t^{-N/2}U_\a\left(\frac{x}{t^{1/2}}\right),\qquad
U_\a(y)=(4\pi \a)^{-N/2}e^{-\frac{|y|^2}{4\a}}.
\end{equation}
Indeed, in self-similar variables we have convergence towards the
stationary profile  $MU_\a$, where
$M=\int_{\mathbb{R}^N}u_0$,
\begin{equation*}
\label{eq:behavior.whole.space}
\lim_{t\to\infty}\max_{y\in\mathbb{R}^N}|t^{N/2}u(yt^{1/2},t)-M U_\a(y)|=0.
\end{equation*}
Thus, there is an asymptotic symmetrization: no matter whether the
initial datum is radial or not,   the large time behavior of $u$ is
given by a radial profile, which, of course, has the same mass as
the datum.

The presence of holes introduces a technical difficulty, since Fourier transforms, which were the main tool in \cite{CCR, IR1}, can no longer be used. But there are differences of a more fundamental nature.
On the one hand, mass is not conserved. On the other hand, the presence of the
hole breaks (in general) the symmetry of the spatial domain, and an
asymptotic symmetrization is no longer possible.
It turns out that the asymptotic behavior depends strongly on the spatial dimension. It is already known that there is a big difference between the case of high dimensions, $N\ge3$, considered in~\cite{CEQW}, and the one-dimensional case studied in \cite{CEQW2, CEQW3}. This paper is devoted to  the intermediate, critical two-dimensional case. An analogous study for the local heat equation in dimensions $N\ge2$ was performed in~\cite{H}.

\

\noindent\textsc{Stationary solutions. } A main difference between the various dimensions has to do with stationary solutions of the problem, that is, functions $\phi$ satisfying
\begin{equation}\label{eq:stationary}
L\phi=0\quad \mbox{in }\R^N\setminus\H,\qquad
\phi=0\quad\mbox{in }\mathcal{H}.
\end{equation}
When  $N\ge 3$, there is a unique solution of this kind approaching  the constant 1 at infinity. Such a solution does not exist for low dimensions, $N=1,2$. Nevertheless, for $N=1$ there are stationary solutions that behave linearly at infinity. More precisely, given constants $b_\pm\ge0$, there is a unique solution to~\eqref{eq:stationary} satisfying
\begin{equation}
\label{eq:stationary.behavior.dim.1}
(\phi(x)-\max\{b^+x,-b^-x\})\in L^\infty(\mathbb{R}).
\end{equation}
In the critical two dimensional case there is  a stationary solution with a logarithmic behavior at infinity,
\begin{equation}
\label{eq:stationary.behavior}
\big(\phi(x)-\log|x|\big)\in L^\infty(\R^2\setminus\H).
\end{equation}
The construction of such solution, which is unique, is quite involved, and will be the subject of Section~\ref{sect:stationary.problem}.

\

\noindent\textsc{Conservation laws and  non-trivial asymptotic quantities. } Stationary solutions play an important role in our analysis. Indeed, though mass is not conserved, under adequate conditions on the initial data, there is in all dimensions a conservation law of the form
$$
\int_{\mathbb{R}^N} u(x,t)\phi(x)\,dx=\text{constant}
$$
for functions $\phi$ satisfying \eqref{eq:stationary} and having for each dimension the right behavior at infinity specified above. In the particular case $N=2$ we need $u_0\in L^1(\R^2,\log|x|\,dx)$.

If the initial data are also bounded, this conservation law yields on the one hand the large time behavior for the mass,
$$
\begin{array}{l}
M(t)\to \displaystyle M_\phi^*:=\int_{\mathbb{R}^N} u_0\phi>0\quad\text{if }N\ge3,\\[10pt]
M(t)=\begin{cases}
O(t^{-1/2})&\text{if }N=1,\\[10pt]
O((\log t)^{-1})&\text{if } N=2,
\end{cases}
\\[10pt]
\end{array}
$$
and on the other hand a global size estimate,
$$
\|u(\cdot,t)\|_\infty=
\begin{cases} O(t^{-N/2}),&N\ge3,\\[8pt]
O(t^{-1})&N=1,\\[8pt]
O((t\log t)^{-1})&N=2;
\end{cases}
$$
see Section~\ref{sect:conservation.law} for the statements concerning the two-dimensional case.

Though in low dimensions there is not a residual asymptotic mass, the conservation laws allow  to obtain non-trivial asymptotic quantities, which enter in the characterization of  the large time behavior of the solutions. Thus, for $N=1$ we have a non-trivial limit for the right and left first momenta $M_1^\pm(t)=\int_{\R_\pm} u(x,t)|x|\,dx$. Indeed,
\begin{equation}
\label{eq:limit.momenta.dim1}
M_1^\pm(t)\to  M_{\phi^\pm}^*:=\int_{\R} u_0\phi^\pm\quad\text{as }t\to\infty,
\end{equation}
where $\phi^\pm$ are the solutions to \eqref{eq:stationary} satisfying
$$
(\phi^\pm(x)-\max\{\pm x,0\})\in L^\infty(\mathbb{R}).
$$
In the two-dimensional case, the relevant quantity approaching a non-trivial constant is  the \emph{logarithmic momentum}, $M_{\rm log}(t):=\int_{\mathbb{R}^2}u(x,t)\log|x|\,dx$. Indeed,
$$
M_{\rm log}(t)\to M_\phi^*:=\int_{\mathbb{R}^2} u_0\phi\quad\text{as }t\to\infty,
$$
where $\phi$ is the solution to~\eqref{eq:stationary} satisfying~\eqref{eq:stationary.behavior}; see Section~\ref{sect:conservation.law}.

\

\noindent\textsc{Outer limit. } The non-trivial limit quantities give an indication of the right scalings in order to obtain the asymptotics for the solution  far away from the origin. Thus, for $N\ge3$ the adequate scaling is the one that conserves mass,
$$
u^\lambda(x,t)=\lambda^{N/2}u(\lambda^{1/2} x,\lambda t).
$$
The scaled solution satisfies
$$
\partial_t \ul=L_\l\ul\quad\text{for }x\in(\R^N\setminus\mathcal{H}^\l),\ t>0, \qquad \H^\l=\{x: \l^{1/2} x\in \H\},
$$
where  $L_\l$ is the operator defined by
$$
L_\l \varphi(x)=\l\int_{\R^N} J_\l(x-y)\big(\varphi(y)-\varphi(x)\big)\,dy,\qquad J_\l(x)=\l^{N/2} J(\l^{1/2} x).
$$
If $\varphi\in C^\infty_{\rm c}(\R^N)$,  an easy computation, which uses the symmetry of the kernel plus Taylor's expansion, shows that $L_\l\varphi$ converges uniformly to  $\a\Delta\varphi$ as $\l\to\infty$, with $\a$ as in~\eqref{eq:definition.alpha}.  Hence, the asymptotic behavior is expected to be given by a multiple of the fundamental solution of the \emph{local} heat equation with diffusivity $\a$. Notice that the fundamental solution conserves mass.  The proportionality constant should therefore be given by the limit mass, $M^*_\phi$. Indeed, we have
$$
\lim_{t\to\infty}t^{N/2}\sup\{|u(x,t)-M^*_\phi\Gamma_\a(x,t)|:\,
|x|\ge \delta\sqrt t\}=0 \quad \text{ for  all }\delta>0.
$$
Notice that the only effect of the holes in this outer limit for large dimensions, $N\ge3$, is the loss
of mass.

For $N=1$ the right scaling is the one that preserves the first momentum,
and the asymptotic behavior is given in terms of the \emph{dipole} solution to  the (local) heat equation with diffusivity $\a$,
$$
\mathcal{D}_\a(x,t)=\partial_x \Gamma_\a(x,t)=-\frac{x}{2\a t}\Gamma_q(x,t).
$$
This special solution, which is  self-similar, has $\delta'$, the derivative of the Dirac mass, as initial data,  and preserves the first momentum. Thus,
\begin{equation*}\label{convergence}
\lim_{t\to\infty}t\sup\{|u(x,t)+2M_{\phi^+}^*\mathcal{D}_\a(x,t)|:\,
x\ge \delta\sqrt t\}=0 \quad \text{ for  all }\delta>0,
\end{equation*}
with $M_{\phi^+}^*$ as in~\eqref{eq:limit.momenta.dim1}.
A similar statement holds in sets of the form  $x\le-\delta t^{1/2}$, substituting $M_{\phi^+}^*$ by $-M_{\phi^-}^*$. Notice that in this one-dimensional case the effect of the hole on the outer limit is more dramatic when compared to the case without holes: it changes both the rate of decay and the limit profile. Moreover, we also lose the symmetry of the asymptotic profile, even when the hole is small, compared to the support of the kernel, and hence does not disconnect the domain.

As for the critical two-dimensional case, the fact that the logarithmic momentum has a nontrivial asymptotic limit does not show directly which is the right scaling. However, the \emph{rescaled mass}, $\log t\, M(t)$, behaves for large times as twice the logarithmic momentum (it is here that we use the condition~\eqref{eq:cond.second.momentum}); see~Section~\ref{sect:outer.limit}.  Hence,  the rescaled mass approaches a non-trivial constant, namely
$$
\log t\, M(t)\to 2M_\phi^*.
$$
Therefore, in outer regions the solution is expected to approach a fundamental solution of the local heat equation with variable mass $2M^*_\phi/\log t$. As we will prove in Section~\ref{sect:outer.limit}, this is indeed the case.
\begin{teo}
\label{thm:outer}
\label{thm:far-field.behavior}
Let $N=2$. Assume that $\H$ and $J$ satisfy, respectively,~\eqref{hypotheses.H} and~\eqref{hypotheses.J}. Let $u$ be the solution to~\eqref{problem} with an initial data $u_0$ satisfying~\eqref{eq:hypotheses.u0.1}--\eqref{eq:cond.second.momentum}. Then,
\begin{equation*}
\label{eq:far-field.behavior}
\lim_{t\to\infty}t\log t\sup\left\{\left| u(x,t)-2M_\phi^*\frac{\Gamma_\a(x,t)}{\log t} \right|:\,
|x|\ge \delta\sqrt t\right\}=0 \quad \text{ for  all }\delta>0,
\end{equation*}
where
$\Gamma_\a$ is given by~\eqref{eq:fundamental.solution}, with $\a$ as in~\eqref{eq:definition.alpha}, $M^*_\phi=\int_{\mathbb{R}^2} u_0\phi$, and $\phi$ is the unique solution to~\eqref{eq:stationary}~and~\eqref{eq:stationary.behavior},
\end{teo}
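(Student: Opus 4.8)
The plan is to adapt to $N=2$ the rescaling-and-compactness strategy of~\cite{CEQW,CEQW2,CEQW3}, the two genuinely new features being the logarithmic factor in the scaling and the need to upgrade a soft weak-compactness limit to the uniform statement of the theorem, for which the sharp asymptotics of the nonlocal heat kernel (see~\cite{IR1}) are used. For $\l>1$ set $w^\l(x,t):=\l\log\l\,u(\l^{1/2}x,\l t)$, so that $\partial_t w^\l=L_\l w^\l$ in $(\R^2\setminus\H^\l)\times\R_+$ and $w^\l=0$ in $\H^\l=\{x:\l^{1/2}x\in\H\}$, a hole that shrinks to $\{0\}$ as $\l\to\infty$. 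Taking $\l=t$ and $y=x/\sqrt t$ one checks that
\[
t\log t\,\Big|u(x,t)-\frac{2M^*_\phi\Gamma_\a(x,t)}{\log t}\Big|=\big|w^t(y,1)-2M^*_\phi\Gamma_\a(y,1)\big|,\qquad |x|\ge\delta\sqrt t\iff|y|\ge\delta,
\]
so the theorem is equivalent to the assertion that $w^\l(\cdot,1)\to 2M^*_\phi\Gamma_\a(\cdot,1)$ uniformly on $\{|x|\ge\delta\}$ as $\l\to\infty$.

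For $t$ ranging over a compact subinterval of $(0,\infty)$ and $\l$ large, I would first collect uniform estimates for the family $\{w^\l\}$: (i)~$\|w^\l(\cdot,t)\|_\infty\le C$, from the global size estimate $\|u(\cdot,s)\|_\infty=O((s\log s)^{-1})$; (ii)~$\int_{\R^2}w^\l(\cdot,t)\,dx=\log\l\,M(\l t)\to 2M^*_\phi$, from the rescaled-mass limit of Section~\ref{sect:outer.limit} (this is where~\eqref{eq:cond.second.momentum} is needed); (iii)~equi-Lipschitz continuity of $t\mapsto\int w^\l(\cdot,t)\psi$ for every $\psi\in C_{\mathrm c}^\infty$, since $\frac{d}{dt}\int w^\l\psi=\int w^\l L_\l\psi$ with $L_\l\psi$ uniformly bounded; and (iv)~the tail bound $\int_{|x|\ge R}w^\l(\cdot,t)\,dx\le Ct/R^2$, obtained by comparing $u$ with the hole-free solution $\bar u$ (extended by zero, $u$ is a subsolution of $\partial_t=L$ on $\R^2$), controlling $\int_{\R^2}|x|^2u(\cdot,s)\,dx$ through the differential inequality $\frac{d}{ds}\int|x|^2u\le\big(\int|z|^2J\big)M(0)$ (which uses the symmetry of $L$ and $u=0$ in $\H$), and using the decay of $M(s)$ to absorb the factor $\log\l$. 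Item~(iv) gives tightness of $\{w^\l(\cdot,t)\}$, and can be sharpened to a pointwise Gaussian bound $w^\l(x,t)\le Ce^{-c|x|^2}$ for $|x|\ge K$ and $t$ near $1$ by sliding a supersolution of $\partial_t=L_\l$ of the form $A(t+\tau)^{-1}e^{-c|x|^2/(t+\tau)}$.

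By (i)--(iii) and Arzel\`a--Ascoli in the time variable, along a subsequence $w^{\l_n}(\cdot,t)\rightharpoonup w(\cdot,t)$ as measures, uniformly on compact $t$-intervals. Testing $\partial_t w^\l=L_\l w^\l$ against $\psi\in C_{\mathrm c}^\infty\big((\R^2\setminus\{0\})\times(0,\infty)\big)$ --- for $\l_n$ large, $\operatorname{supp}\psi$ avoids $\H^{\l_n}$ and its $d\l_n^{-1/2}$-neighbourhood, so no boundary term survives --- and using that $L_{\l_n}\psi\to\a\Delta\psi$ uniformly (the computation recalled in the Introduction), one gets $\partial_t w=\a\Delta w$ off the origin; being bounded, $w$ is a classical solution there, and the isolated spatial singularity at $0$ is removable, so $\partial_t w=\a\Delta w$ on $\R^2\times(0,\infty)$. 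To identify $w$ I would compute its initial trace: by~(iv), $\int_{|x|>\rho}w(\cdot,t)\,dx\le\liminf_n\int_{|x|>\rho}w^{\l_n}(\cdot,t)\,dx\le Ct/\rho^2\to0$ as $t\to0^+$, while tightness gives $\int_{\R^2}w(\cdot,t)\,dx=2M^*_\phi$; hence $w(\cdot,t)\rightharpoonup 2M^*_\phi\,\delta_0$ and, by uniqueness for the heat equation with a measure as datum, $w=2M^*_\phi\Gamma_\a$. Since the limit is independent of the subsequence, $w^\l\to 2M^*_\phi\Gamma_\a$ weakly. To promote this to uniform convergence on $\{|x|\ge\delta\}$ at $t=1$, I would write the Duhamel identity for $v^\l:=w^\l-2M^*_\phi\Gamma_\a$ --- which solves $\partial_t v^\l=L_\l v^\l+2M^*_\phi(L_\l-\a\Delta)\Gamma_\a$ away from the (now negligible) hole --- against the fundamental solution $p_\l$ of $\partial_t=L_\l$ in $\R^2$: the forcing tends to $0$ uniformly on compacta off the origin, $p_\l\to\Gamma_\a$ by the scaled form of the estimates in~\cite{IR1}, and $v^\l(\cdot,t_0)\rightharpoonup0$ with uniformly integrable tails, whence $v^\l\to0$ uniformly on $\{\delta\le|x|\le R\}$; the region $\{|x|\ge R\}$ is handled by the Gaussian decay of $\Gamma_\a$ together with the pointwise tail bound of the previous paragraph.

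The main obstacle is that $L_\l$ has no regularizing effect --- convolution with $J_\l$ has Lipschitz constant of order $\l^{1/2}\|w^\l\|_\infty$ --- so $\{w^\l\}$ is not equicontinuous in space and only weak-measure compactness is available; converting this into the pointwise, uniform statement of the theorem is exactly what forces the use of the sharp nonlocal heat-kernel asymptotics and of the comparison with the hole-free evolution. A secondary but delicate point, already at the level of the estimates, is that the crude second-moment bound grows too fast after rescaling, so the tightness and tail control genuinely rely on the decay rate of $M(s)$ --- which is itself tied to the very constant $2M^*_\phi$ one is trying to produce.
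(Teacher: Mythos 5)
Your rescaling--compactness scheme is a genuinely different route from the paper's (which compares $\log t\,u$ with the local heat semigroup started at the intermediate time $t/(\log t)^{1/2}$ and identifies the limit by a Fourier argument), and much of its skeleton is viable, but two concrete steps fail as written. First, the claimed uniform Gaussian bound $w^\l(x,t)\le Ce^{-c|x|^2}$ is false under \eqref{eq:hypotheses.u0.1}--\eqref{eq:cond.second.momentum}: these hypotheses control only a second moment of $u_0$, and an initial datum consisting of bumps of mass $\rho_k^{-3}$ placed at distances $\rho_k=2^k$ (which satisfies $(H_1)$) yields, at times $t\sim\rho_k^{2}/\log\rho_k$ and at $|x|=\rho_k$, values $t\log t\,u(x,t)\gtrsim\rho_k^{-3}\log t$, which exceeds $Ce^{-c|x|^2/t}\sim C\rho_k^{-4}$; so no such bound can be extracted. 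Moreover the proposed derivation is circular: to run the comparison with $A(t+\tau)^{-1}e^{-c|x|^2/(t+\tau)}$ on $\{|x|\ge K\}\times[t_0,1]$ you must dominate $w^\l(\cdot,t_0)$ on the initial slice, where you only have the $L^\infty$ bound (no spatial decay), and starting from $t=0$ is hopeless since the rescaled datum carries mass $\log\l\,\|u_0\|_{L^1}\to\infty$. What the theorem actually needs in the very far field is much weaker, e.g. $\sup_{|x|\ge R}w^\l(x,1)\le C/R^2$, and that does follow from your own ingredients by a Duhamel splitting of the convolution with the rescaled kernel: on $\{|y|\ge|x|/2\}$ use the tail-mass bound (iv), on $\{|x-y|\ge|x|/2\}$ use the pointwise bound \eqref{eq:pointwise.estimate.W}. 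So this step is repairable, but not by the argument you give.

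Second, and more importantly, in the upgrade-to-uniform step you discard the hole as ``negligible'' without any estimate. Extended by zero, $w^\l$ solves $\partial_t w^\l=L_\l w^\l-\chi_{\H^\l}\,\l\,(J_\l*w^\l)$ on all of $\R^2$, so the Duhamel representation against $p_\l$ carries this absorption term over $[t_0,1]$. With only the global bound $w^\l\le C/s$ (equivalently $u\lesssim (s\log s)^{-1}$ near $\H$), its space--time $L^1$ norm is $O(1)$, not $o(1)$, so it cannot be ignored; this is exactly why the paper stresses that the outer result requires the refined near-field estimate of Corollary~\ref{prop:improved size estimate}, $u\lesssim (t(\log t)^2)^{-1}$ near the hole, which makes the absorption $O(1/\log\l)$. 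Alternatively you could exploit the mass balance, $\int_{t_0}^1\int_{\H^\l}\l(J_\l*w^\l)=\log\l\,[M(\l t_0)-M(\l)]\to 2M^*_\phi-2M^*_\phi=0$ by the rescaled-mass limit you already invoke; either fix must appear explicitly, since as written the key step carries an uncontrolled $O(1)$ error. (A smaller issue of the same flavour: the identity $\frac{d}{dt}\int w^\l\psi=\int w^\l L_\l\psi$ in (iii) also omits the absorption term; for $\psi\in C^\infty_{\rm c}$ it is harmless because that term is uniformly bounded, but it should be stated.) With these two repairs, plus the standard but nontrivial facts you gloss over (removability of the singular line for bounded caloric functions, Widder-type uniqueness with measure initial trace), your alternative proof can be made to work; its advantage is conceptual transparency, while the paper's intermediate-time comparison delivers the uniform estimate in one stroke, including the very far field, without any compactness argument.
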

In this borderline dimension the existence of a hole modifies slightly the rate of decay in the outer limit, but it does not change the rescaled asymptotic profile --except for the proportionality constant--, which is still given by the (radially symmetric) profile of the fundamental solution of the heat equation.

It is worth noticing that the mentioned scaling only gives a non-trivial profile if $0<\xi_1\le|x|t^{-1/2}\le \xi_2<\infty$,  in the far field scale. In the \emph{very far field},  $|x|t^{-1/2}\to\infty$, our result says that $t\log t\, u(x,t)\to0$, but not more. Analogous results hold for other dimensions in the very far field.

\

\noindent\textsc{Inner limit. } What happens close to the holes?
For large dimensions, $N\ge 3$, solutions
still decay as $O(t^{-N/2})$ in the inner region
$|x|\le\delta t^{1/2}$ for some $\delta>0$.
If we scale the solutions accordingly,
we get that the new variable $w(x,t)=t^{N/2}u(x,t)$ satisfies
$$
\partial_t w=Lw+\frac{Nw}{2t}.
$$
Thus, $w$ is expected to converge to a stationary solution, solving
\eqref{eq:stationary}. To determine completely this solution we have to prescribe its behavior at infinity. Since there is an overlapping region between the inner and the outer developments,  they can be matched, leading to
$$
t^{N/2}(u(x,t)-M^*_\phi\phi(x)\Gamma_\a(x,t))\to 0\quad\mbox{as }
t\to\infty \mbox{ uniformly in }\R^N.
$$
In particular,
$$
t^{N/2}u(x,t)\to \frac{M^*_\phi\phi(x)}{(4\pi\a)^{N/2}}\quad\mbox{uniformly in compact subsets of }\R^N.
$$
Notice that, as expected, the effect of the hole is more important when we are close to it. In this region, the asymptotic profile does not have radial symmetry, and sees more details of the hole through the function $\phi$.

In low dimensions the determination of the inner behavior is by far more involved. The main reason is that solutions do not decay at the same rate everywhere in sets of the form $|x|\le D t^{1/2}$. Thus, for $N=1$ the rates of decay depend on the ratio $t^{3/2}/|x|$. More precisely, in the \emph{near field} scale, $ |x|\le t^{1/2}h(t)$, $\lim_{t\to\infty}h(t)=0$, the scaled function $t^{3/2}u(x,t)/|x|$ converges to a multiple of $\phi^*(x)/|x|$, where $\phi^*$ is a solution to~\eqref{eq:stationary}--\eqref{eq:stationary.behavior.dim.1} for some constants $b^\pm$. The right choice for the involved constants is, again, obtained through a matching procedure with the outer limit. We obtain
\begin{equation*}
\frac {t^{3/2}}{|x|}\left(u(x,t)+2
\frac{\phi^*(x)}{x}\mathcal{D}_\a(x,t)\right)
\to 0\quad\mbox{as }t\to\infty \mbox{ uniformly in }\R,
\end{equation*}
where $\phi^*$ is the solution to~\eqref{eq:stationary}--\eqref{eq:stationary.behavior.dim.1} with $b^\pm=M^*_{\phi^\pm}$.
In particular,
$$
t^{3/2}u(x,t)\to \frac{\phi^*(x)}{2\a^{3/2}\sqrt{\pi}}\quad\mbox{uniformly in compact subsets of }\R.
$$
Let us remark that in this case the matching is more complicated, since the overlapping region is not so wide as in the case of large dimensions. A main step is obtaining a super-solution giving the right decay rates in the whole inner region and up to the beginning of the far-field scale.

In the two-dimensional case that is the subject of this paper, the rates of decay  depend on the ratio $t(\log t)^2/\log |x|$. Moreover, the scaled function $t(\log t)^2u(x,t)/\log |x|$ converges in the near field scale, $|x|\le t^{1/2}h(t)$, $\lim_{t\to\infty}h(t)=0$, to a multiple of $\phi(x)/\log |x|$, where $\phi$ is the unique solution to~\eqref{eq:stationary} and~\eqref{eq:stationary.behavior}. We finally obtain, after matching the inner and the outer developments, our main result, whose proof is completed in Section~\ref{sect:inner limit}.
\begin{teo}
\label{thm:main}
Under the assumptions of Theorem~\ref{thm:far-field.behavior}, the unique solution $u$ to problem~\eqref{problem} satisfies
\begin{equation}
\label{eq:main.result}
\frac{t(\log t)^2}{\log |x|}\left( u(x,t)-4M^*_\phi\frac{\Gamma_\a(x,t)\phi(x)}{(\log t)^2}  \right)\to 0\quad\mbox{as }t\to\infty \mbox{ uniformly in }\R^2.
\end{equation}
\end{teo}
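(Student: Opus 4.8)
The plan is to treat the statement in a far field, $|x|\gtrsim\sqrt t$, and an inner region, $|x|\lesssim\sqrt t$, glued together by a matching argument; the bulk of the work is the inner region. In the far field, for $|x|\ge\delta\sqrt t$, the estimate follows from Theorem~\ref{thm:far-field.behavior} by writing
\[
u-4M^*_\phi\frac{\Gamma_\a\phi}{(\log t)^2}
=\Big(u-2M^*_\phi\frac{\Gamma_\a}{\log t}\Big)
+2M^*_\phi\,\Gamma_\a\,\frac{\log t-2\phi}{(\log t)^2}.
\]
The first bracket is $o\big((t\log t)^{-1}\big)$ uniformly on $\{|x|\ge\delta\sqrt t\}$ by Theorem~\ref{thm:far-field.behavior}, so multiplying by the weight $\frac{t(\log t)^2}{\log|x|}\le Ct\log t$ keeps it $o(1)$. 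For the second term I would use $\phi(x)=\log|x|+O(1)$ from~\eqref{eq:stationary.behavior} together with $\Gamma_\a(x,t)=t^{-1}U_\a(xt^{-1/2})$: with $r=|x|t^{-1/2}$ one has $\log t-2\phi(x)=-2\log r+O(1)$, so after multiplication by the weight the second term is $\frac{M^*_\phi}{2\pi\a}\cdot\frac{e^{-r^2/(4\a)}(-2\log r+O(1))}{\log|x|}$, which tends to $0$ uniformly once we restrict to $|x|\ge h(t)\sqrt t$ with $h(t)\to0$ more slowly than any power of $t$, so that $\log r=o(\log t)$ there. A diagonal argument shows Theorem~\ref{thm:far-field.behavior} holds with such an $h$ replacing $\delta$, and hence it remains to treat $|x|\le h(t)\sqrt t$.

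\emph{Barriers.} The heart of the argument is to control $u$ in $\{|x|\le D\sqrt t\}$ for every $D$. I would construct sub- and supersolutions of~\eqref{problem} built from $\phi$ and $\Gamma_\a$, schematically $\frac{c_\pm}{(\log t)^2}\,\Gamma_\a(x,t+t_0)\big(\phi(x)+c_\pm\big)$, with constants chosen so that they lie on the correct side of $u$ at an initial time $T$ and on $\H$, and so that the relevant differential inequality holds for large $t$. Since $L\phi=0$ and $\Gamma_\a$, $\phi$ vary slowly at the scale $d$ of the kernel, verifying the inequalities reduces to estimating $L$ of a product, controlled via the compact support of $J$ and a Taylor expansion. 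These barriers simultaneously yield the decay rate $(t(\log t)^2)^{-1}$, the $\phi$-shaped profile vanishing on $\H$, and the Gaussian interpolation to the far field, and they supply the compactness used next.

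\emph{Inner limit and matching.} Let $v=t(\log t)^2u$, which solves $\partial_t v=Lv+\frac{v}{t}\big(1+\frac2{\log t}\big)$. The barriers give $0\le v\le C(\phi+C)$ on $\{|x|\le D\sqrt t\}$ for $t\ge T$, and together with the equation this yields local equicontinuity of the time shifts $v(\cdot,t_n+\cdot)$; along $t_n\to\infty$ (with a diagonal argument in the shift) one extracts an entire-in-time solution $V$ of $\partial_s V=LV$ in $\R^2\setminus\H$ with $V=0$ on $\H$ and $|V|\le C(\phi+1)$. A Liouville-type rigidity argument together with the analysis of the stationary problem in Section~\ref{sect:stationary.problem} forces $V$ to be time-independent and a constant multiple $c_*\phi$ of $\phi$; hence $v(\cdot,t)\to c_*\phi$ uniformly on compacts. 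To identify $c_*$ I would match with the outer behavior in an overlap annulus $h(t)\sqrt t\le|x|\le\delta\sqrt t$: there $\phi(x)=\tfrac12\log t+o(\log t)$ gives $u\approx\tfrac{c_*}{2t\log t}$, whereas Theorem~\ref{thm:far-field.behavior} with $|x|t^{-1/2}\to0$ gives $u\approx\tfrac{M^*_\phi}{2\pi\a\,t\log t}$; equating forces $c_*=M^*_\phi/(\pi\a)$, which is exactly the constant in~\eqref{eq:main.result} since $4M^*_\phi\Gamma_\a(0,t)=M^*_\phi/(\pi\a t)$. Finally, with $c_*$ known, I would rerun the barrier construction using the now-pinned behavior on $\{|x|=R\}$ to obtain refined sub/supersolutions squeezing $\frac{t(\log t)^2u}{\log|x|}$ between two quantities converging uniformly on $\{|x|\le h(t)\sqrt t\}$ to $\frac{4M^*_\phi t\,\Gamma_\a\phi}{\log|x|}$; combining this with the uniform-on-compacts convergence and the far-field estimate yields~\eqref{eq:main.result} on all of $\R^2$.

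\emph{Main obstacle.} The delicate step is the barrier construction, the point already flagged for $N=1$: one needs genuine super/subsolutions of the nonlocal equation that at once carry the correct decay rate, reproduce the stationary profile vanishing on $\H$, and interpolate to the far-field Gaussian. Because $L$ obeys no product rule, $L(\phi\,\Gamma_\a)$ must be estimated by hand using the finite range of $J$, and the slow (logarithmic) decay makes the overlap between the inner and outer developments narrow, so all error terms have to be tracked quantitatively for the matching to close.
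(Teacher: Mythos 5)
Your global architecture (outer region from Theorem~\ref{thm:outer}, inner region by comparison, matching in an intermediate annulus) is the right one, and your far-field reduction and the identification $c_*=M^*_\phi/(\pi\a)$ are correct computations. But the proof has a genuine gap exactly at the point you defer: the barriers. The functions you propose, $\frac{c_\pm}{(\log t)^2}\Gamma_\a(x,t+t_0)\big(\phi(x)+c_\pm\big)$, are not super/subsolutions: since $\partial_t\Gamma_\a=\a\Delta\Gamma_\a$ and $L\approx\a\Delta$ to leading order, the principal terms cancel identically, and what is left consists of the negative term coming from differentiating $(\log t)^{-2}$, the fourth-order Taylor remainder, and the cross term $\int J(x-y)(\Gamma-\Gamma)(\phi-\phi)$ --- none of which has a sign. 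This is precisely why the paper's Lemma~\ref{lema:super1} uses a Gaussian with diffusivity $a<\a$ strictly (to retain a positive term of size $(\frac{\a}{a}-1)\Gamma_a/t$), and even then only obtains a supersolution in the annulus $4r_0^2\le|x|^2\le 2at$; and why, in the comparison of Theorem~\ref{thm:inner.W}, the profile $v=4M^*_\phi\phi W/(\log t)^2$ (which merely satisfies $|\partial_t v-Lv|\le Ct^{-2}(\log t)^{-2}$) must be corrected by the sum $R_\ep=\ep w_\nu+KV$: the stationary-type correction built from Lemma~\ref{stationary-super2} works only for $|x|^2\le \delta t/(\log t)^2$, the evolutionary one only for $|x|^2\ge\delta t/(\log t)^2$, and only the combination, with the tuned exponents $2<\gamma<3$, $0<\nu<3-\gamma$, yields super/subsolutions on all of $\{|x|^2\le 2at\}$. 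Asserting that ``constants can be chosen so that the differential inequality holds'' skips the entire difficulty.

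The compactness/Liouville route you substitute for this also rests on two unproved steps. First, the rigidity statement --- that every eternal solution of $\partial_s V=LV$ in $\R^2\setminus\H$, vanishing on $\H$ and bounded by $C(\phi+1)$, is a time-independent multiple of $\phi$ --- is a nontrivial classification with logarithmic growth; nothing in Section~\ref{sect:stationary.problem} (which only handles bounded stationary solutions) gives it, and you do not supply a proof. Second, even granting it, the identification of $c_*$ uses the inner asymptotics at $|x|\sim h(t)\sqrt t$, a region escaping to infinity, whereas your compactness argument only yields convergence uniformly on compact sets; bridging that gap is again the quantitative squeezing you postpone to a ``rerun of the barrier construction.'' The paper avoids both issues at once: it compares $u$ with $\omega^\pm_\ep=v\pm R_{\ep,K}$ directly on $\{|x|^2\le 2at\}$, taking the boundary data on $\{2at\le|x|^2\le 4at\}$ from Theorem~\ref{thm:outer}, so the constant $4M^*_\phi$ is inherited from the outer limit and no Liouville theorem or intermediate-scale convergence is needed. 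As written, your proposal restates the goal at its hardest point rather than proving it.
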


As a consequence,
$$
\lim_{t\to\infty}t(\log t)^2u(x,t)\to\frac{M^*_\phi}{\pi\a}\phi(x)\quad\mbox{uniformly in compact subsets of }\R^2.
$$

\medskip

\noindent\emph{Remark. } The proof can be easily extended to the case of initial data without sign restrictions.  Indeed, if $u^\pm$ are the solutions with initial data $\{u_0\}_\pm$, then, by the linearity of the equation,  $u=u^+-u^-$. Since $M_\phi^*=\int_{\mathbb{R}^2}\{u_0\}_+\phi-\int_{\mathbb{R}^2}\{u_0\}_-\phi$, the result for general data will follow from the results for $u^+$ and $u^-$. Notice, however, that in the case of initial data with sign changes it may happen that $M_\phi^*=0$.  In this situation our result is not optimal (solutions decay faster), and we should look for a different scaling.

\medskip

Notice that the decay rate $O\left((t\log t)^{-1}\right)$ is seen not only in the far field scale, $|x|\approx\xi t^{1/2}$, $\xi\ne0$, but also  in much smaller \lq parabolas'. Indeed, let $h$ be such that
$h(t)\to0$, $th(t)\to\infty$, $\frac{\log h(t)}{\log t}\to-\alpha$, $0\le\alpha<1$, as $t\to\infty$. Then,
\[
\lim_{t\to\infty}t\log t\,u(x,t)= (1-\alpha)\frac{M^*_\phi}{2\pi\a}\quad\mbox{if }|x|^2=th(t).
\]
In particular, this result holds if $h(t)=t^{-\alpha}$ or $h(t)=(\log t)^{-\gamma}$ with $\gamma>0$ (in which case $\alpha=0$).

\section{The stationary problem}
\label{sect:stationary.problem}
\setcounter{equation}{0}

The first aim of this section is to prove the existence of a unique solution $\phi$ to~\eqref{eq:stationary} and~\eqref{eq:stationary.behavior}. Existence will follow from the fact that there exist ordered sub- and super-solutions with adequate behaviors at infinity. We will also obtain  estimates for the first derivatives of $\phi$, which will be used later, in Section~\ref{sect:inner limit}, to determine the inner behavior.

\subsection{Sub- and super-solutions}
We start by constructing sub- and super-solutions to the stationary problem behaving as $\log|x|$ when $|x|\to\infty$.
\begin{lema}\label{lema-subsolution} Let $N=2$, and assume that $\H$ and $J$ satisfy, respectively,~\eqref{hypotheses.H} and~\eqref{hypotheses.J}. There exist constants $R_0,k_0>0$ such that the function
\begin{equation*}\label{subsolution-stationary}
V_-(x)=
\log(|x|^2+k_0)-R_0,\quad x\in \R^2,
\end{equation*}
satisfies
\[-LV_-\le0 \quad\text{in }\R^2,\qquad V_-\le0\quad \text{in }\H.
\]
\end{lema}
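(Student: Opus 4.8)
The plan is to exploit that $V_-$ differs by a constant from $W(x):=\log(|x|^2+k_0)$, which is smooth and \emph{subharmonic} on all of $\R^2$ once $k_0>0$. Since $L$ annihilates constants (indeed $L\,1=J*1-1=0$ because $\int J=1$), we have $LV_-=LW$, so the statement splits into two independent tasks: proving $LW\ge0$ in $\R^2$ for every $k_0>0$, and then fixing $R_0$ so that $W-R_0\le0$ on $\H$. The second is immediate from~\eqref{hypotheses.H}: on $\H\subset B_{\mathcal R}(0)$ one has $W(x)\le\log(\mathcal R^2+k_0)$, so any $R_0\ge\log(\mathcal R^2+k_0)$ works (e.g. $k_0=1$, $R_0=\log(\mathcal R^2+1)$). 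All the content is in the inequality $LW\ge0$.

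For that, first I would compute $\Delta W(x)=\frac{4k_0}{(|x|^2+k_0)^2}>0$ for every $x\in\R^2$; here $k_0>0$ is precisely what keeps $W$ of class $C^\infty$ up to the origin and makes the inequality hold everywhere. Hence $W$ is a globally smooth subharmonic function, and therefore satisfies the sub-mean value inequality over \emph{every} sphere: with $g_x(s):=\fint_{\partial B_s(x)}W\,d\mathcal H^1$ one has $g_x(0^+)=W(x)$ and $g_x'(s)=\frac1{2\pi s}\int_{B_s(x)}\Delta W\,dy\ge0$, so $g_x(s)\ge W(x)$ for all $s>0$. Now I would bring in the radial symmetry of $J$ from~\eqref{hypotheses.J}: writing $J(z)=\tilde J(|z|)$ and passing to polar coordinates,
\[
LW(x)=\int_{\R^2}\tilde J(|z|)\,\big(W(x+z)-W(x)\big)\,dz=\int_0^d\tilde J(s)\,2\pi s\,\big(g_x(s)-W(x)\big)\,ds\ge0,
\]
since $\tilde J\ge0$ and every factor $g_x(s)-W(x)$ is nonnegative. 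This gives $-LV_-=-LW\le0$ in $\R^2$ and, together with the first paragraph, finishes the argument.

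I do not expect a genuine obstacle here; the one conceptual point to get right is that a function which is smooth and subharmonic \emph{on the whole plane} is automatically an $L$-subsolution for any nonnegative radial kernel, because the convolution only ever compares $W$ with its averages over spheres contained in the domain of subharmonicity. If one prefers to avoid invoking the sub-mean value property abstractly, the inequality $g_x(s)\ge W(x)$ can be checked by hand from the classical identity $\frac1{2\pi}\int_0^{2\pi}\log(A+B\cos\theta)\,d\theta=\log\frac{A+\sqrt{A^2-B^2}}{2}$, valid for $A>|B|\ge0$, applied with $A=|x|^2+k_0+s^2$ and $B=2|x|s$ (the hypothesis $A>|B|$ holds because $(|x|-s)^2+k_0>0$); this reduces the claim to $\sqrt{(a+s^2)^2-4|x|^2s^2}\ge a-s^2$ with $a=|x|^2+k_0$, i.e. ultimately to $a\ge|x|^2$. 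The only remaining routine points are the Laplacian computation and the Fubini justification (licit since $W$ grows at most logarithmically and $\tilde J$ is bounded with support in $[0,d]$) for the polar-coordinate rewriting.
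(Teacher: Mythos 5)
Your proof is correct, and it takes a genuinely different route from the paper's. The paper compares $L$ with $\a\Delta$ via a fourth-order Taylor expansion, obtaining $-LV_-(x)\le (K-4k_0)(|x|^2+k_0)^{-2}$, and must then take $k_0\ge K/4$ so that the Laplacian term dominates the Taylor remainder; you instead observe that $\log(|x|^2+k_0)$ is globally smooth and subharmonic for \emph{any} $k_0>0$ and that a radially symmetric, nonnegative kernel of unit mass only averages a function over circles, so the sub-mean value inequality immediately gives $LW\ge 0$ everywhere (and your optional explicit evaluation of the circular average via the classical $\log(A+B\cos\theta)$ integral checks out, reducing to $a\ge|x|^2$). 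Your argument is sharper and more elementary: it needs no largeness of $k_0$, no $C^2$ regularity or compact support of $J$ — only radial symmetry, nonnegativity and unit mass — and the choice $R_0\ge\log(\mathcal R^2+k_0)$ handles $V_-\le 0$ on $\H$ exactly as in the paper. What the paper's quantitative Taylor computation buys, and your sign argument does not, is the two-sided bound $|D^\beta V_-|\le C(|x|^2+k_0)^{-2}$ and hence $|LV_-|\le C(|x|^2+k_0)^{-2}$, which is recycled later (the bound $|f_2(x)|\le C(1+|x|^2)^{-2}$ for $f_2=Lg$, $g=\tfrac12\log(1+|x|^2)$, invoked in the proof of the gradient estimate for $\phi$ is credited to ``the course of the proof'' of this lemma); so if one adopted your proof verbatim, that auxiliary estimate would have to be established separately, though it is not needed for the statement of the lemma itself.
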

\begin{proof}
Since $V_-\in C^\infty(\R^2)$ and the kernel $J$ is radially symmetric, a simple calculation using Taylor's expansion shows that, for $\a$ as in~\eqref{eq:definition.alpha},
$$
LV_-(x)-\a \Delta V_-(x)=\frac1{4!}\sum_{|\beta|=4}\int_{\mathbb{R}^2}J(|x-y|)D^\beta V_-(\xi)(x-y)^\beta,
$$
for some $\xi$ lying in the segment that joins $x$ and $y$. We are using the standard multi-index notation.
A direct computation yields
$$
\Delta V_-(x)=\frac{4 k_0}{(|x|^2+k_0)^2},\qquad
|D^\beta V_-(x)|\le \frac C{(|x|^2+k_0)^2}, \quad|\beta|=4.
$$
Hence, since we only have to take into account the values $y$ such that $|x-y|\le1$,
$$
|D^\beta V_-(\xi)|\le \frac C{(|x|^2+k_0)^2}, \quad|\beta|=4.
$$
Therefore, there exists a constant $K>0$, independent of $k_0$ and $R$, such that
$$
-LV_-(x)\le  \frac{K-4 k_0}{(|x|^2+k_0)^2}.
$$
The first part of our statement follows taking $k_0\ge K/4$.

Once  $k_0$ is fixed, since $\H$ is bounded, by choosing $R_0$ large enough we get $V_-\le0$ in $\H$.
\end{proof}

The construction of a super-solution, which we do next, is far more involved, and much more complicated than for $N\ne2$.
\begin{lema}
\label{lema-supersolution}
Assume the hypotheses of Lemma~\ref{lema-subsolution}. Given $0<r_0<d/4$ such that $B_{2r_0}(0)\subset\H$, there
are constants $\kappa>0$,  $\gamma_0\ge0$, and $D>0$ such that for every $\gamma\ge \gamma_0$ there is a locally bounded super-solution $V_+$ of the stationary problem~\eqref{eq:stationary} satisfying
\begin{equation}\label{V+}
\begin{cases}
-LV_+(x)\ge\frac {\kappa}{|x|^3},& x\in \R^2\setminus \H,\\
V_+(x)\ge \gamma-\gamma_0\ge0,&x\in\mathbb{R}^2,\\
V_+(x)=\log(|x|-r_0)+\gamma ,&|x|\ge D.
\end{cases}
\end{equation}
\end{lema}

\begin{proof} We will prove that there exist $\kappa>0$, $k\ge2$, and a sequence $0= a_1> a_2>\cdots> a_{k+1}$ such that
\begin{equation*}\label{super-solution}
v_+(x)=
\begin{cases}
\log(|x|-r_0) ,&|x|\ge D:=2r_0+k\frac d2,\\[6pt]
a_j,&x\in \Gamma_j:=\left\{D-j\frac d2\le |x|< D-(j-1)\frac d2\right\}, \ j=1,\cdots,k+1,\\[6pt]
a_{k+1},&x\in \Gamma_{k+1}:=B_{2r_0},
\end{cases}
\end{equation*}
satisfies
\begin{equation*}\label{v+}
\quad -Lv_+(x)\ge\frac {\kappa}{|x|^3}\quad\mbox{in }\R^2\setminus B_{2r_0}.
\end{equation*}
Hence, the result will follow just taking $V_+=v_++\gamma$, with
$\gamma\ge \gamma_0:=|a_{k+1}|$.

For $|x|\ge 2r_0$, a direct computation yields
\[ \Delta \big(\log(|x|-r_0)\big)=-\frac{r_0}{|x|(|x|-r_0)^2},\qquad
\big|D^\beta\big(\log(|x|-r_0)\big)\big|\le \frac C{(|x|-r_0)^4},\quad |\beta|=4.
\]
Hence, Taylor's expansion shows that there exists $k\in\mathbb{N}$, $k\ge2$, large enough, such that
\begin{equation}\label{eq-log}
\log(|x|-r_0)-\int_{\mathbb{R}^2}J(x-y)\log(|y|-r_0)\,dy\ge\frac{r_0|x|}{2(|x|-r_0)^4}\quad\mbox{if }|x|\ge \underbrace{2r_0+k\frac{d}2}_{D}-d.
\end{equation}
In particular
\begin{equation}
\label{eq:super.far.away}
-Lv_+(x)\ge \frac{r_0}{2|x|^3}\quad\text{if }|x|\ge D+d.
\end{equation}

The integer $k$ may be chosen so that $\log(|x|-r_0)\ge0$ for $|x|\ge D-d$. Hence, if $0=a_1\ge a_2$, for $D\le |x|\le D+d$ we have, using~\eqref{eq-log},
$$
\begin{array}{rcl}
-Lv_+(x)&=&\displaystyle\log(|x|-r_0)-\int_{\{|x|\ge D\}}J(x-y)\log(|y|-r_0)\,dy-a_2\int_{\Gamma_2}J(x-y)\,dy\\[10pt]
&\ge&\displaystyle \log(|x|-r_0)-\int_{\mathbb{R}^2}J(x-y)\log(|y|-r_0)\,dy\ge\frac{r_0|x|}{2(|x|-r_0)^4}\ge \underbrace{\frac{r_0D}{2(D+d-r_0)^4}}_{c_0}.
\end{array}
$$
We will  prove that this estimate can be extended to the set $2r_0\le|x|\le D$, so that
$$
-Lv_+(x)\ge c_0 \quad\text{if } 2r_0\le|x|\le D+d.
$$
From this inequality and~\eqref{eq:super.far.away} we conclude~\eqref{V+} immediately, just taking $\kappa>0$ small enough.
To prove this claim we start by considering the annulus $\Gamma_1$, and then proceed inductively towards the hole, choosing in each step one of the constants $a_j$. The \emph{positive} constants
$$
\underline J^j=\inf\Big\{\int_{\Gamma_{j+1}}
J(x-y)\,dy:x\in \Gamma_{j}\Big\},\quad j=2,\cdots,k,
$$
will play an important role in the process.

Let $x\in \Gamma_1$. We already have  $0=a_1\ge a_2$, but we are still free to choose $a_2$, as long as it is negative. Then, if $a_3\le0$, since $J$ has unit integral,
$$
\begin{array}{rcl}
-Lv_+(x)&=&\displaystyle-\int_{\{|x|\ge D\}}J(x-y)\log(|y|-r_0)\,dy-a_2\int_{\Gamma_2}J(x-y)\,dy
-a_3\int_{\Gamma_3}J(x-y)\,dy\\[10pt]
&\ge&\displaystyle -\log(D+d-r_0)+|a_2|\,\underline J^2\ge c_0,
\end{array}
$$
if $|a_2|$ is large enough. Notice that $a_2$ has to be strictly negative.

We have already fixed $a_1$ and $a_2$, and have imposed that $a_3\le0$.   If $x\in \Gamma_2$, and $a_4\le0$,
$$
\begin{array}{rcl}
-Lv_+(x)&=&\displaystyle a_2-\int_{\{|x|\ge D\}}J(x-y)\log(|y|-r_0)\,dy+\sum_{i=2}^4 |a_i|\int_{\Gamma_i}J(x-y)\,dy
\\[10pt]
&\ge&\displaystyle a_2 -\log\left(D+\frac{d}2-r_0\right)+|a_3|\,\underline J^3\ge c_0,
\end{array}
$$
if $|a_3|$ is large enough. Since $\underline J^3<1$, we have $|a_3|> |a_2|$.

We now use induction. Let $3\le j\le k-1$. Assume that we have already chosen the constants $0=a_1>a_2>\cdots> a_j$, and imposed $a_{j+1}\le 0$, so that $-Lv_+(x)\ge0$ for $x\in \cup_{i=1}^{j-1}\Gamma_i$. Let now $x\in \Gamma_j$. If $a_{j+2}\le0$,
$$
-Lv_+(x)=\displaystyle a_j+\sum_{i=j-2}^{j+2} |a_i|\int_{\Gamma_i}J(x-y)\,dy
\ge\displaystyle a_j+|a_{j+1}|\,\underline J^{j+1}\ge c_0,
$$
if $|a_{j+1}|$ is large enough. Since $\underline J^{j+1}< 1$, we have $|a_{j+1}|> |a_j|$.

The final step is nearly identical. We have already fixed $\{a_j\}_{j=1}^{k-1}$, and have imposed that $a_{k+1}\le0$. Let $x\in \Gamma_k$. Then,
$$
-Lv_+(x)=\displaystyle a_k+\sum_{i=k-2}^{k+1} |a_i|\int_{\Gamma_i}J(x-y)\,dy
\ge\displaystyle a_k+|a_{k+1}|\,\underline J^{k+1}\ge c_0,
$$
if $|a_{k+1}|$ is large enough. Since $\underline J^{k+1}< 1$, we have $|a_{k+1}|> |a_k|$.
\end{proof}

With the same ideas we can construct another super-solution, that will be of use in Section \ref{section:inner limit}

\begin{lema}\label{stationary-super2} Assume the hypotheses of Lemma~\ref{lema-subsolution}. Let $0<\nu<1$. There exist constants $\kappa>0$, $\gamma_0\ge0$, and $D\ge1$ such that for every $\gamma\ge \gamma_0$ there is a locally bounded super-solution $w_\nu^+$ of the stationary problem~\eqref{eq:stationary} satisfying
\begin{equation}
\label{eq:stationary.super.2}
\begin{cases}
\displaystyle -Lw_\nu^+(x)\ge \frac{\kappa}{|x|^2(\log|x|)^{2-\nu}},&x\in \R^2\setminus \H,\\[6pt]
w_\nu^+(x)\ge \gamma-\gamma_0\ge0,& x\in \R^2,\\[6pt]
w_\nu^+(x)=(\log|x|)^\nu+\gamma,& |x|\ge D.
\end{cases}
\end{equation}
\end{lema}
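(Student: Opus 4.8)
The statement of Lemma~\ref{stationary-super2} is structurally parallel to Lemma~\ref{lema-supersolution}: we want a super-solution of the stationary problem that behaves like $(\log|x|)^\nu$ at infinity, is nonnegative, and whose operator applied to it decays at a controlled rate. The plan is to mimic the three-stage construction in the proof of Lemma~\ref{lema-supersolution}: first analyze the smooth profile $(\log|x|)^\nu$ in the far field by Taylor's expansion; second, on an intermediate annular region, use a finite staircase of negative constants stepping inward towards $\H$; third, carry out an inductive choice of those constants, using the positive lower bounds $\underline J^j$ for the mass of $J(x-\cdot)$ over the neighboring annulus.

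\textbf{Step 1: the far-field profile.} For $|x|\ge 2$ set $\psi(x)=(\log|x|)^\nu$. A direct computation gives $\Delta\psi(x) = \nu(\log|x|)^{\nu-1}|x|^{-2}\big(\nu-1 - \log|x| + \cdots\big)$; the dominant term for large $|x|$ is $-\nu(\log|x|)^{\nu-1}|x|^{-2}$, which is \emph{negative}, of order $|x|^{-2}(\log|x|)^{\nu-1}$. One also checks $|D^\beta\psi(x)|\le C|x|^{-|\beta|}(\log|x|)^{\nu-1}$ for $|\beta|\le4$. Since $\a$ is defined by~\eqref{eq:definition.alpha} and $J$ is radial, Taylor's expansion (exactly as in Lemma~\ref{lema-subsolution}) gives $L\psi(x) = \a\Delta\psi(x) + O\big(|x|^{-4}(\log|x|)^{\nu-1}\big)$, so $-L\psi(x) \ge c|x|^{-2}(\log|x|)^{\nu-1}$ for $|x|$ large, say $|x|\ge D-d$ after choosing $D$ large enough (here $D$ is a large integer multiple of $d/2$ plus $2r_0$, as in the previous lemma). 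Since $(\log|x|)^{\nu-1}\ge \tfrac12(\log|x|)^{-(2-\nu)}\cdot (\log|x|)$ is false in general, I will simply observe that $(\log|x|)^{\nu-1}\ge (\log|x|)^{-(2-\nu)}$ for $|x|\ge e$, so in particular $-L\psi(x)\ge c|x|^{-2}(\log|x|)^{-(2-\nu)}$ on $|x|\ge D+d$, which is the required bound there. Also $\psi\ge0$ wherever it is defined once $D\ge e$.

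\textbf{Step 2 and 3: the staircase and the induction.} Define $w_\nu^+$ to equal $(\log|x|)^\nu$ for $|x|\ge D$ and a constant $a_j$ on each annulus $\Gamma_j=\{D-j\frac d2\le|x|<D-(j-1)\frac d2\}$, $j=1,\dots,k$, and $a_{k+1}$ on $B_{2r_0}$, with $0=a_1>a_2>\cdots>a_{k+1}$ to be determined. On the transition annulus $D\le|x|\le D+d$ the computation of $-Lw_\nu^+$ is identical to the one in Lemma~\ref{lema-supersolution} and yields a bound below by a positive constant $c_0$ provided $a_2\le0$; since $(\log|x|)^\nu$ is bounded between positive constants there, no new phenomenon appears. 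Then, starting from $\Gamma_1$ and proceeding inductively inward exactly as in the proof of Lemma~\ref{lema-supersolution}, at each stage $-Lw_\nu^+(x)=a_j+\sum_{i} |a_i|\int_{\Gamma_i}J(x-y)\,dy\ge a_j+|a_{j+1}|\,\underline J^{\,j+1}$, which we force to exceed $c_0$ by choosing $|a_{j+1}|$ large; since $\underline J^{\,j+1}<1$ the sequence $|a_j|$ is strictly increasing. This gives $-Lw_\nu^+(x)\ge c_0$ on $2r_0\le|x|\le D+d$. Combining with Step~1 and using that $|x|^{-2}(\log|x|)^{-(2-\nu)}$ is bounded on $2r_0\le|x|\le D+d$, we obtain $-Lw_\nu^+(x)\ge \kappa |x|^{-2}(\log|x|)^{-(2-\nu)}$ for all $x\in\R^2\setminus\H$ after shrinking $\kappa$. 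Finally set $w_\nu^+ = w_\nu^+ + \gamma$ with $\gamma\ge\gamma_0:=|a_{k+1}|$ to make it nonnegative with the stated value $(\log|x|)^\nu+\gamma$ at infinity.

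\textbf{Main obstacle.} The only genuinely delicate point is Step~1: one must verify that the leading correction $L\psi-\a\Delta\psi$ is of strictly smaller order than $\Delta\psi$ itself near infinity — here $\Delta\psi\sim -\nu(\log|x|)^{\nu-1}|x|^{-2}$ while the fourth-order remainder is $O((\log|x|)^{\nu-1}|x|^{-4})$, so the gain is the extra $|x|^{-2}$ — and that the logarithmic factors do not spoil the sign. This requires being slightly careful with the logarithmic derivatives and with choosing $D$ large (it is here that $D\ge1$, indeed $D$ large, enters); the staircase argument of Steps~2--3 is then a verbatim repetition of the bookkeeping already carried out in Lemma~\ref{lema-supersolution} and presents no real difficulty.
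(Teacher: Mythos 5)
Your overall route---Taylor expansion of the smooth profile $(\log|x|)^\nu$ in the far field, plus the inward staircase of negative constants chosen inductively via the quantities $\underline J^{\,j}$---is exactly the construction the paper intends (it gives no separate proof, only the remark that this lemma follows with the same ideas as Lemma~\ref{lema-supersolution}), and your Steps 2--3 are indeed a verbatim repetition of that bookkeeping. However, Step 1 contains a concrete computational error. In two dimensions $\log|x|$ is harmonic away from the origin, so for $\psi(x)=(\log|x|)^\nu$ the terms of order $|x|^{-2}(\log|x|)^{\nu-1}$ cancel exactly:
\[
\Delta\psi(x)=\psi''(r)+\tfrac1r\,\psi'(r)=\nu(\nu-1)\,\frac{(\log|x|)^{\nu-2}}{|x|^{2}},\qquad r=|x|,
\]
so there is no dominant term $-\nu(\log|x|)^{\nu-1}|x|^{-2}$ as you claim. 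Consequently your intermediate estimate $-L\psi\ge c\,|x|^{-2}(\log|x|)^{\nu-1}$ is false, and the bridge ``$(\log|x|)^{\nu-1}\ge(\log|x|)^{-(2-\nu)}$'' is hung on a leading term that is not there.

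The slip is fixable, because for $0<\nu<1$ the correct leading term $-\a\Delta\psi(x)=\a\nu(1-\nu)|x|^{-2}(\log|x|)^{\nu-2}$ is still positive and is \emph{precisely} of the order $|x|^{-2}(\log|x|)^{-(2-\nu)}$ demanded in \eqref{eq:stationary.super.2}; this cancellation is the very reason the exponent $2-\nu$ (and not the stronger $1-\nu$ your computation would suggest) appears in the statement. The fourth-order Taylor remainder is $O\big(|x|^{-4}(\log|x|)^{\nu-1}\big)$, still of lower order than $|x|^{-2}(\log|x|)^{\nu-2}$ since the ratio is $\log|x|/|x|^2\to0$, so choosing $D$ large yields $-L\psi(x)\ge\kappa\,|x|^{-2}(\log|x|)^{-(2-\nu)}$ in the far field, and the remainder of your argument (transition annulus, staircase, induction, adding $\gamma\ge\gamma_0=|a_{k+1}|$, and noting that the first inequality in \eqref{eq:stationary.super.2} is only required on $\R^2\setminus\H$, where $|x|\ge2$ by \eqref{hypotheses.H}, so its right-hand side is bounded on the compact part) goes through unchanged. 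As written, though, the key far-field estimate of Step 1 is misderived and must be redone with the corrected Laplacian.
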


\subsection{Existence and uniqueness}
The function $\phi$ solving~\eqref{eq:stationary} and \eqref{eq:stationary.behavior} will be obtained as the limit when $n$ tends to infinity of the solutions $\phi_n$ to
\begin{equation*}\label{Pn}
L\phi_n=0\quad\text{in }B_n(0)\setminus\H,\qquad \phi_n=0\quad\text{in }\H,\qquad \phi_n=\frac12V_- \quad\text{in }B_{n+d}(0)\setminus B_n(0),
\end{equation*}
where $V_-$ is the sub-solution constructed  in Lemma \ref{lema-subsolution}.
Existence and uniqueness for such problem is a consequence of \cite[Lemma 3.1]{CEQW}

\begin{prop}\label{existence-stationary} Let $N=2$, and assume that $\H$ and $J$ satisfy, respectively,~\eqref{hypotheses.H} and~\eqref{hypotheses.J}.  There exists a solution to~\eqref{eq:stationary} and \eqref{eq:stationary.behavior}.
\end{prop}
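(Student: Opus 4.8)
The plan is to obtain $\phi$ as a monotone limit of solutions on an exhausting sequence of balls, sandwiched between the sub- and super-solutions of Lemmas~\ref{lema-subsolution} and~\ref{lema-supersolution}. Let $\phi_n$ denote the solution of the approximating problem introduced just above, namely $L\phi_n=0$ in $B_n(0)\setminus\H$, $\phi_n=0$ in $\H$, and $\phi_n=\frac12V_-$ in $B_{n+d}(0)\setminus B_n(0)$; existence and uniqueness are guaranteed by~\cite[Lemma~3.1]{CEQW}. Note that for $x\in B_n(0)\setminus\H$ the operator $L$ only samples values of $\phi_n$ in $B_d(x)\subset B_{n+d}(0)$, so the data prescribed on $\H\cup\big(B_{n+d}(0)\setminus B_n(0)\big)$ fully determine $\phi_n$, and the comparison principle for $L$ on $B_n(0)\setminus\H$ applies.

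The first step is the uniform two-sided bound $\frac12V_-\le\phi_n\le V_+$ on $B_{n+d}(0)$. For the lower bound, $\frac12V_-$ is a subsolution of~\eqref{eq:stationary} by Lemma~\ref{lema-subsolution}, satisfies $\frac12V_-\le0=\phi_n$ in $\H$, and coincides with $\phi_n$ in $B_{n+d}(0)\setminus B_n(0)$, so comparison gives $\phi_n\ge\frac12V_-$. For the upper bound I would fix $\gamma$ large in Lemma~\ref{lema-supersolution}: since $V_+(x)=\log(|x|-r_0)+\gamma=\log|x|+\gamma+O(1)$ for $|x|\ge D$ while $\frac12V_-(x)=\log|x|-\frac{R_0}2+O(|x|^{-2})$, the difference $V_+-\frac12V_-$ is bounded below on the whole outer region once $\gamma$ is large enough; combined with $V_+\ge\gamma-\gamma_0\ge0=\phi_n$ in $\H$ and $-LV_+\ge0\ge-L\phi_n$, comparison yields $\phi_n\le V_+$. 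Finally, the sequence is monotone: on $B_n(0)\setminus\H$ both $\phi_n$ and $\phi_{n+1}$ are $L$-harmonic, both vanish in $\H$, and in $B_{n+d}(0)\setminus B_n(0)$ we have $\phi_n=\frac12V_-\le\phi_{n+1}$ by the lower bound already proved for $\phi_{n+1}$; hence $\phi_{n+1}\ge\phi_n$ in $B_n(0)\setminus\H$, and the inequality extends trivially to $B_{n+d}(0)$.

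Therefore $\phi(x):=\lim_{n\to\infty}\phi_n(x)$ exists for every $x$, with $\frac12V_-\le\phi\le V_+$, so $\phi$ is locally bounded (and nonnegative outside a fixed ball, where $V_-\ge0$). Fixing $x\in\R^2\setminus\H$, we have $x\in B_n(0)\setminus\H$ for all large $n$, and passing to the limit in $J*\phi_n(x)=\phi_n(x)$ by monotone (or dominated) convergence gives $J*\phi(x)=\phi(x)$; thus $L\phi=0$ in $\R^2\setminus\H$, while $\phi=0$ in $\H$. The asymptotics~\eqref{eq:stationary.behavior} follow at once from the sandwich, since both $\frac12V_-(x)-\log|x|$ and $V_+(x)-\log|x|$ belong to $L^\infty(\R^2\setminus\H)$, hence so does $\phi(x)-\log|x|$.

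The only delicate point in this argument is the bookkeeping of the comparison principle at the artificial outer boundary together with the choice of $\gamma$ making $V_+$ dominate the prescribed data $\frac12V_-$ there; the genuinely hard work of the section --- the construction of the discontinuous super-solution $V_+$ --- has already been carried out in Lemma~\ref{lema-supersolution}. (Uniqueness of $\phi$, asserted in the opening of this section but not part of the present statement, would be handled separately by a Liouville-type argument: the difference of two solutions with the same logarithmic behavior is bounded and $L$-harmonic in $\R^2\setminus\H$ and vanishes in $\H$, so the maximum principle on $B_R(0)\setminus\H$, letting $R\to\infty$, forces it to vanish.)
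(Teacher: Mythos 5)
Your proposal is correct and follows essentially the same route as the paper: the same approximating problems $\phi_n$ with boundary data $\tfrac12V_-$ on the outer annulus, the same two-sided comparison with the sub- and super-solutions of Lemmas~\ref{lema-subsolution} and~\ref{lema-supersolution} (choosing $\gamma$ large so that $\tfrac12V_-\le V_+$), monotonicity in $n$ by comparison, and passage to the monotone limit, with \eqref{eq:stationary.behavior} read off from the sandwich. The extra details you supply (why $V_+-\tfrac12V_-$ is bounded below for large $\gamma$, and the convergence of $J*\phi_n$) are correct fillings-in of steps the paper leaves implicit.
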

\begin{proof} Let
$V_-$ and $V_+$ be the sub- and super-solutions constructed in Lemmas \ref{lema-subsolution} and \ref{lema-supersolution}.
Taking $\gamma\ge\gamma_0$  large enough, we have $\frac12V_-\le V_+$ in $\R^2$. Thus, the comparison principle gives
\[\frac12V_-\le \phi_n\le V_+ \quad\mbox{in }B_n.
\]

Moreover, by applying the comparison principle once again, we find that $\phi_n\le\phi_{n+1}$ in $B_n$.
In particular, there exists  $\phi=\lim_{n\to\infty}\phi_n$. It is easily seen that this monotone limit solves~\eqref{eq:stationary} and satisfies
\[
\frac12\log(|x|^2+k_0)-\frac12R_0=\frac12V_-(x)\le \phi(x)\le V_+(x)=\log(|x|-r_0)+C,\quad x\in\R^2,
\]
for some constant $C>0$,
which implies~\eqref{eq:stationary.behavior}.
\end{proof}
Uniqueness follows from the fact that the unique bounded solution of~\eqref{eq:stationary} is $\phi\equiv0$.

\begin{prop}
Let $N=2$, and assume that $\H$ and $J$ satisfy, respectively,~\eqref{hypotheses.H} and~\eqref{hypotheses.J}.  The only bounded  solution to \eqref{eq:stationary} is $\phi=0$.
\end{prop}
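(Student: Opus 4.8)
The plan is to show that a bounded solution $\phi$ of \eqref{eq:stationary} must vanish identically by a two-step argument: first reduce to the half-line version of the problem using a maximum-principle comparison, and then apply a Liouville-type argument in the spirit of \cite{CEQW}. Since $\phi$ is bounded, say $|\phi|\le M$, the constant function $M$ is a super-solution of $L\psi=0$ in $\R^2\setminus\H$ that dominates $\phi$ on $\H$ (where $\phi=0$); so the comparison principle of \cite[Lemma 3.1]{CEQW} gives $\phi\le M$ everywhere, and similarly $\phi\ge -M$. The real work is to improve these bounds to $0$.

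First I would treat the nonnegative case: suppose $\phi\ge 0$ (the general case follows by decomposing, or by the argument below applied to $\phi+M$, which is a nonnegative bounded solution of the same equation in $\R^2\setminus\H$ taking the value $M$ on $\H$). Set $S=\sup_{\R^2}\phi$, which is finite, and is attained or approached along a sequence $x_n$. The key observation is that the equation $L\phi(x)=0$ reads
\begin{equation*}
\phi(x)=\int_{\R^2}J(x-y)\phi(y)\,dy,
\end{equation*}
for $x\in\R^2\setminus\H$, i.e.\ $\phi(x)$ is a weighted average of its values on $B_d(x)$. If the supremum $S$ is attained at an interior point $x_0\in\R^2\setminus\H$, then since $\int J=1$ and $J>0$ on $B_d(0)$ we must have $\phi\equiv S$ on $B_d(x_0)$; propagating this along chains of overlapping balls that stay in the connected unbounded component of $\R^2\setminus\overline\H$, we conclude $\phi\equiv S$ on a neighborhood of $\H$, forcing $S=0$ by the boundary condition. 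The hard part is the case where $S$ is only approached at infinity, $\phi(x_n)\to S$ with $|x_n|\to\infty$; then I would use a translation/compactness argument: the translates $\phi(\cdot+x_n)$ are uniformly bounded and satisfy the same nonlocal equation on larger and larger balls (the hole recedes to infinity), and by Arzelà–Ascoli applied to $J*\phi(\cdot+x_n)$ one extracts a locally uniform limit $\Phi$ solving $L\Phi=0$ in all of $\R^2$ with $0\le\Phi\le S$ and $\Phi(0)=S$. By the strong maximum principle just described (now with no hole to obstruct the chains), $\Phi\equiv S$ on $\R^2$.

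It then remains to rule out a nonzero constant solution on the whole plane interacting with the original problem; the clean way is to observe that $\Phi\equiv S$ means the original $\phi$ is asymptotically constant equal to $S$ at infinity, and then compare $\phi$ with the sub-solution $\varepsilon V_-$ built in Lemma \ref{lema-subsolution}: for any $\varepsilon>0$, on a large ball $B_n$ one has $\varepsilon V_- \le S+o(1)$ near $\partial B_n$ while $\varepsilon V_-\le 0\le\phi$ on $\H$, so $\phi \ge$ a lower barrier; but to get the contradiction I would instead exploit that $\phi - S$ is a bounded solution tending to $0$ at infinity and $\le 0$ (since $S=\sup\phi$), apply the comparison principle against $-\delta$ for every $\delta>0$ to get $\phi\ge S-\delta$ hence $\phi\equiv S$ on $\R^2\setminus\H$, and then the boundary condition $\phi=0$ on $\H\ne\emptyset$ together with the averaging identity on the layer $\{2\le|x|<2+d\}$ (which meets $\H$) forces $S=0$. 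The main obstacle is the case of the supremum being attained only at infinity, which is exactly where the translation-and-extract-a-limit device is needed; everything else is a routine application of the comparison principle and the strong maximum principle for the nonlocal averaging operator.
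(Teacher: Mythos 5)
Your first case (supremum attained at a finite point, propagation of the maximum through the connected component, contradiction with the zero boundary values on $\H$) is sound and is essentially the propagation argument the paper itself uses. The genuine gap is in the second case, where $S=\sup\phi>0$ is only approached along a sequence $|x_n|\to\infty$. The translation--compactness step only tells you that $\phi$ is close to $S$ on large balls centered at the points $x_n$; it does \emph{not} give "$\phi$ is asymptotically constant equal to $S$ at infinity", since other sequences tending to infinity are not controlled. And even if you had $\phi\to S$ at infinity, the next step fails: you compare $\phi-S$ (which is $\le 0$, solves the equation outside $\H$, and would tend to $0$ at infinity) with the constant $-\delta$, but on $\H$ one has $\phi-S=-S<-\delta$, so the comparison principle cannot be invoked there and the conclusion $\phi\ge S-\delta$ does not follow. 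There is also a structural reason why this route cannot be completed as written: nothing in your argument uses $N=2$. The same reasoning (interior propagation, translation limits, Liouville for entire bounded solutions of $L\Phi=0$) applies verbatim for $N\ge3$, where the statement is false --- as recalled in the remark following the proposition, for $N\ge3$ there exist bounded stationary solutions tending to a nonzero constant at infinity, and for such a solution your scheme breaks exactly at the comparison-with-$-\delta$ step.

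The paper avoids the "supremum at infinity" case altogether by injecting the two-dimensional ingredient at the start: it considers $\phi_\varepsilon=\phi-\varepsilon V_+$, where $V_+$ is the supersolution of Lemma~\ref{lema-supersolution} growing like $\log|x|$ (its construction is where $N=2$ enters). Since $\phi$ is bounded, $\phi_\varepsilon\to-\infty$ as $|x|\to\infty$, so its maximum is attained at a finite point, and the propagation argument you already have yields $\max\phi_\varepsilon\le0$; letting $\varepsilon\to0$ gives $\phi\le0$, and applying the same to $-\phi$ gives $\phi\ge0$. So your barrier instinct (you mention $\varepsilon V_-$) points in the right direction, but the barrier that does the work is the logarithmically growing $V_+$, used to force the maximum to be attained, not to analyze a hypothetical limit at infinity.
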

\begin{proof} The function $\phi_\varepsilon=\phi-\varepsilon V_+$ satisfies $-L\phi_\varepsilon\ge0$ in $\R^2\setminus\H$, and reaches its maximum at some finite point $\bar x$, since by construction $\phi_\varepsilon(x)\to-\infty$ as $|x|\to\infty$.  A standard (for nonlocal operators) argument shows that if $\phi_\varepsilon(\bar x) > 0$ we reach a contradiction. Indeed, if this happens, $\bar x\in \R^2\setminus\H$ and $\phi_\varepsilon$ is constant in $B_{d}(\bar x)$. We can thus propagate the maximum  to the whole connected component of $\R\setminus \H$ where $\bar x$ lies, which leads to a contradiction for points near the boundary of this component, at a distance of it less than $d$.  Then, passing to the limit as $\varepsilon\to0$, we obtain $\phi\le0$. The same argument applied to $-\phi$ leads to  $\phi \ge 0$.
\end{proof}

\medskip

\noindent\emph{Remark. } This result is also true for $N=1$; see~\cite{CEQW3}. However, it fails for $N\ge3$. Indeed, in large dimensions there are stationary solutions that take a constant value, different from zero, at infinity; see~\cite{CEQW}.

\subsection{Estimates for the derivatives}
To prove them, we use that $\phi$ solves a problem of the form
\begin{equation}
\label{eq:inhomogeneous}
\partial_t u-L u=f\quad\text{in }\R^2\times\R_+, \qquad u(x,0)=u_0(x),\quad x\in\R^2.
\end{equation}
By the variations of constants formula, solutions to~\eqref{eq:inhomogeneous} can be written in terms of the fundamental solution $F=F(x,t)$ of the operator $\partial_t-L$ in the whole space, which
can be decomposed as
\begin{equation*}\label{fund-sol}
F(x,t)=\textrm{e}^{-t}\delta(x)+W(x,t),
\end{equation*}
where $\delta$ is the Dirac mass at the origin and $W$ is a nonnegative smooth function defined via its Fourier transform,
\begin{equation*}
\label{eq:transform.omega}
\widehat W(\xi,t)=\textrm{e}^{-t}\left(\textrm{e}^{\hat J(\xi)t}-1\right);
\end{equation*}
see~\cite{CCR}. Thus,
\begin{equation*}
\label{eq:representation.formula}
\begin{aligned}
u(x,t)=&\textrm{e}^{-t}u_0(x)+\int_{\mathbb{R}^2} W(x-y,t)u_0(y)\,dy\\
&+\int_{0}^t\textrm{e}^{-(t-s)}f(x,s)\,ds+\int_{0}^t\int_{\mathbb{R}^2} W(x-y,t-s)f(y,s)\,dyds.
\end{aligned}
\end{equation*}
Therefore, estimates for solutions to~\eqref{eq:inhomogeneous}, and in particular for $\phi$, will follow if we have good estimates for the right hand side of the equation,  $f$,  and  for the regular part, $W$, of the fundamental solution.

The asymptotic convergence
of $W$ to the fundamental solution of the local heat equation
with diffusivity $\a$ yields a first class of estimates. Indeed,  for every multi-index $\beta\in\mathbb{N}^2$ there is a constant $C$ such that
\begin{equation}\label{estima-W}
|D_x^\beta
W(x,t)-D_x^\beta\Gamma_\a(x,t)|\le Ct^{-\frac{|\beta|+3}2},\quad x\in\mathbb{R}^2,\ t>0;
\end{equation}
see \cite{IR1}. Hence,  in
particular,
\begin{equation}\label{decaimiento-W}
|D_x^\beta W(x,t)|\le Ct^{-\frac{|\beta|+2}2}, \quad x\in\mathbb{R}^2,\ t>0.
\end{equation}
These estimates give the right order of time decay, and will prove to be useful later, in Sections~\ref{sect:outer.limit} and~\ref{sect:inner limit}. However,  they do not take into account the spatial structure of $W$, and are quite poor for $t\approx 0$;  hence they are not enough for our present goal.
Instead, we will use the pointwise estimate
\begin{equation}\label{eq:pointwise.estimate.W}
|D_x^\beta W(x,t)|\le \frac{Ct}{1+|x|^{4+|\beta|}}, \quad x\in\mathbb{R}^2,\ t>0,
\end{equation}
and the integral estimate
\begin{equation}
\label{eq:integral.estimate.W}
\int_{\R^2}|D_x^\beta W(x,t)|\,dx\le C t^{-|\beta|/2}, \quad x\in\mathbb{R}^2,\ t>0,
\end{equation}
both of them valid for all $\beta\in\mathbb{N}^2$. These estimates were proved in~\cite{TW} through a comparison argument, using that $W$ is a solution to
\begin{equation}\label{eq-W}
\begin{cases}
\partial_t W(x,t)-LW(x,t)=e^{-t}J(x)\quad&\mbox{in }\mathbb{R}^2\times\R_+,\\
W(x,0)=0\quad&\mbox{in }\mathbb{R}^2.
\end{cases}
\end{equation}

\begin{lema}
\label{lemma:estimates.derivatives.phi} Assume the hypotheses in Proposition~\ref{existence-stationary}.  Let $\phi$ be the solution to~\eqref{eq:stationary} and~\eqref{eq:stationary.behavior}.
There exists a constant $C>0$
such that
\begin{equation*}\label{psi}
|\nabla\phi(x)|\le \frac {C}{|x|} \quad\mbox{in }\R^2\setminus\H.
\end{equation*}
\end{lema}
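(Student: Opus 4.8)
The plan is to apply the representation formula for solutions of $\partial_t u-Lu=f$ recalled above to the time-independent solution $u(x,t)\equiv\phi(x)$ of~\eqref{eq:inhomogeneous}, for which $u_0=\phi$ and $f(x,t)=-L\phi(x)$. The key structural remark is that, since $\phi$ solves~\eqref{eq:stationary}, $L\phi$ vanishes on $\R^2\setminus\H$, while on $\H$, where $\phi\equiv0$, one has $L\phi=J*\phi$; using the continuity of $\phi$ (which equals $0$ on $\overline{\H}$) together with the equation, one checks that $-L\phi$ is a continuous, bounded function supported in $\overline{\H}\subset B_{\mathcal R}(0)$. Thus the forcing is concentrated in a fixed bounded set, far from the region at infinity where decay is sought. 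We also record that $|\phi(y)|\le C\big(1+\log(1+|y|)\big)$ on $\R^2$, by~\eqref{eq:stationary.behavior} and the local boundedness of $\phi$.

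First I would treat the region $|x|>\mathcal R$, where $L\phi(x)=0$, so that the term $\int_0^t\textrm{e}^{-(t-s)}f(x,s)\,ds$ in the representation formula vanishes and the singular part $\textrm{e}^{-t}\delta$ of $F$ only contributes $\textrm{e}^{-t}\phi(x)$. This yields, for every $t>0$,
\[
(1-\textrm{e}^{-t})\phi(x)=\int_{\R^2}W(x-y,t)\phi(y)\,dy-\int_0^t\!\!\int_{\overline{\H}}W(x-y,\tau)L\phi(y)\,dy\,d\tau .
\]
Both integrals on the right are smooth in $x$ on $\{|x|>\mathcal R\}$ (the kernel $W(\cdot,\tau)$ is $C^\infty$, its $x$-derivatives are integrable against $\phi$ by~\eqref{eq:pointwise.estimate.W} since $|\phi(y)|\lesssim\log(1+|y|)$, and in the second term $y$ stays in $\overline{\H}$, at positive distance from $x$), so in particular $\phi\in C^1(\{|x|>\mathcal R\})$. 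Differentiating in $x$ and letting $t\to\infty$, I claim the first term disappears: writing $\log(1+|y|)\le\log(1+|x|)+\log(1+|x-y|)$ and splitting $\R^2$ into $|x-y|\le t^{1/2}$ and $|x-y|>t^{1/2}$, the estimates~\eqref{eq:integral.estimate.W},~\eqref{decaimiento-W} and~\eqref{eq:pointwise.estimate.W} give
\[
\Big|\int_{\R^2}D_xW(x-y,t)\phi(y)\,dy\Big|\le C\,t^{-1/2}\big(1+\log(1+|x|)+\log(1+t)\big)\to0\quad\text{as }t\to\infty .
\]
Hence $\nabla\phi(x)=-\int_0^\infty\!\int_{\overline{\H}}D_xW(x-y,\tau)L\phi(y)\,dy\,d\tau$. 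Now, for $|x|\ge2\mathcal R$ and $y\in\overline{\H}$ one has $|x-y|\ge|x|/2$, so~\eqref{eq:pointwise.estimate.W} and~\eqref{decaimiento-W} give $|D_xW(x-y,\tau)|\le C\min\{\tau^{-3/2},\tau|x|^{-5}\}$, and splitting the $\tau$-integral at $\tau=|x|^2$,
\[
|\nabla\phi(x)|\le C\,\|L\phi\|_{L^\infty(\overline{\H})}\,|\overline{\H}|\int_0^\infty\min\{\tau^{-3/2},\tau|x|^{-5}\}\,d\tau\le\frac{C}{|x|}.
\]

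It remains to bound $|\nabla\phi|$ on $(\R^2\setminus\H)\cap\{|x|\le2\mathcal R\}$, where the claimed estimate is just boundedness, since $|x|\ge2$ there. For this I would use local regularity: on $\R^2\setminus\H$ the equation reads $\phi(x)=\int_{\R^2\setminus\H}J(x-y)\phi(y)\,dy=\big(J*(\phi\,\mathbf 1_{\R^2\setminus\H})\big)(x)$, and since $J\in C^2_{\textrm{c}}(\R^2)$ and $\phi$ is locally bounded, the right-hand side is $C^2$ on all of $\R^2$; hence $\nabla\phi$ is continuous on $\R^2\setminus\H$, in particular bounded on the compact set $(\R^2\setminus\H)\cap\overline{B_{2\mathcal R}(0)}$. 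Combining this with the estimate for $|x|\ge2\mathcal R$ gives $|\nabla\phi(x)|\le C/|x|$ on $\R^2\setminus\H$.

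The step I expect to be the main obstacle is the passage to the limit in $\int_{\R^2}D_xW(x-y,t)\phi(y)\,dy$: because $\phi$ grows logarithmically it cannot be handled with the $L^1$ bound on $D_xW$ alone, and one must combine the three complementary estimates for $W$ — the pointwise decay~\eqref{eq:pointwise.estimate.W} away from the origin, the $L^1$-in-space bound~\eqref{eq:integral.estimate.W}, and the $O(t^{-3/2})$ time decay~\eqref{decaimiento-W} — over the appropriate ranges ($|x-y|$ below or above $t^{1/2}$, and $\tau$ below or above $|x|^2$ in the final integral). Getting the sharp $1/|x|$ rate out of the last integral is exactly where the balance $\tau\approx|x|^2$ enters. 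The rest is routine bookkeeping with these bounds.
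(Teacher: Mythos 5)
Your argument is correct in substance, but it takes a genuinely different route from the paper's. The paper first subtracts the explicit logarithm, setting $\psi=\phi-\tfrac12\log(1+|x|^2)$ so that the variation-of-constants formula is applied to a \emph{bounded} function (at the price of an extra forcing term $Lg$), and then runs a scaling argument: it differentiates the formula for $\psi^\l(x)=\psi(\l x)$, fixes the single time scale $t=\l^2$, and bounds four terms using~\eqref{decaimiento-W},~\eqref{eq:pointwise.estimate.W},~\eqref{eq:integral.estimate.W}, concluding $|\nabla\psi^\l|\le C$ for $|x|\ge 1/2$. You instead apply the representation formula directly to $\phi$, exploit that the forcing $-L\phi=-\chi_\H\,(J*\phi)$ is bounded and supported in $\overline\H$, kill the free-evolution term by letting $t\to\infty$ (your splitting at $|x-y|=t^{1/2}$ against the logarithmic growth is correct and gives $O(t^{-1/2}\log t)$), and read the $1/|x|$ decay off the closed formula $\nabla\phi(x)=-\int_0^\infty\int_{\overline\H}D_xW(x-y,\tau)L\phi(y)\,dy\,d\tau$ via the crossover of $\min\{\tau^{-3/2},\tau|x|^{-5}\}$ at $\tau=|x|^2$; the compact region is handled, as in the paper, by the local $C^1$ regularity coming from $\phi=J*\phi$ off $\H$. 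Your version is arguably more transparent (compactly supported forcing, an explicit potential representation for $\nabla\phi$), while the paper's scaling device avoids both the passage $t\to\infty$ and the use of the Duhamel formula for unbounded data.

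Two small caveats. First, your preliminary remark is misstated: $\phi$ is \emph{not} continuous across $\partial\H$ and does not vanish on $\overline\H$ (it equals $J*\phi>0$ on $\partial\H$), so $-L\phi$ is bounded and supported in $\overline\H$ but not continuous; this is harmless, since your estimates only use $\|J*\phi\|_{L^\infty(\H)}<\infty$ and the compact support. Second, you invoke the variation-of-constants formula for the time-independent solution $u\equiv\phi$, which has logarithmic growth; strictly speaking one should note that the formula defines a solution (the convolutions converge thanks to the $Ct/(1+|x|^4)$ decay of $W$) and that uniqueness for the Cauchy problem holds in a class of functions of slow growth (a Gronwall argument in a weighted sup norm, using that $J$ has compact support). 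The paper avoids this by working with the bounded $\psi$, though it too applies the formula to non-integrable data without comment, so the level of rigor is comparable.
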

\begin{proof}
The result will follow from an analogous estimate for $\psi(x)=\phi(x)-g(x)$, where $g(x)=\frac12\log(1+|x|^2)$.
The function $\psi$, which is bounded, is a solution to~\eqref{eq:inhomogeneous} with a right hand side
$$
f(x)=-\underbrace{\X_\H(x)(J*\phi)(x)}_{f_1(x)}-\underbrace{Lg(x)}_{f_2(x)},
$$
and initial data $u_0(x)=\psi(x)$.

Let $\psi^\l(x)=\psi(\l x)$. By differentiating the equation for $\psi$ given by the variations of constants formula, we get for $|x|\ge1/2$ and $\l$ large enough so that $\l x\notin\H$,
$$
\begin{aligned}
\nabla\psi^\l(x)=&\underbrace{\frac\l{1-\textrm{e}^{-t}}\int_{\R^2} \nabla W(\l x-y,t)\psi(y)\,dy}_{\mathcal A(x,t)}\\
&-\underbrace{\frac\l{1-\textrm{e}^{-t}}
\int_0^t\int_{\H}
\nabla W(\l x-y,t-s)(J*\phi)(y)\,dy\,ds}_{\mathcal B(x,t)}\\
&-\underbrace{\frac\l{1-\textrm{e}^{-t}}\int_0^t\int_{\R^2}
\nabla W(\l x-y,t-s)f_2(y)\,dy\,ds}_{\mathcal C(x,t)}-\underbrace{\l \nabla f_2(\l x)}_{\mathcal D(x,t)}.
\end{aligned}
$$
The first term is easily estimated using~\eqref{eq:integral.estimate.W} with $|\beta|=1$, since $\psi$ is bounded,
$$
|{\mathcal A}(x,t)|\le \frac{C\l t^{-1/2}}{1-e^{-t}}\le C\quad \text{if we take }t=\l^2,\ \l\ge1.
$$

As for the second term, the key is that, since $\H$ is bounded,  there exists $\l_0$ such that for $y\in\H$ and $\l\ge\l_0$ there holds that $|\l x-y|\ge \frac12\l|x|$. Hence, using that $J*\phi$ is bounded in $\H$, and estimate~\eqref{eq:pointwise.estimate.W} with $|\beta|=1$, we have
$$
|{\mathcal B}(x,t)|\le C|\H|\frac\l{1-e^{-t}}\frac{2^5}{\l^5|x|^5}\int_0^t(t-s)\,ds= \frac{C\l^{-4} t^2}{(1-e^{-t})|x|^5}\le C\quad\text{if }|x|\ge 1/2,\ \l\ge\l_0, \ t=\l^2.
$$

In order to control the third term we decompose the integral in two parts and use the bound $|f_2(x)|\le \frac C{(1+|x|^2)^2}$, which was obtained in the course of the proof of Lemma \ref{lema-subsolution}. We have,
\[
\begin{aligned}
|{\mathcal C}(x,t)|\le&\underbrace{\frac\l{1-e^{-t}}\int_0^t\int_{\{|y|\le\frac\l2|x|\}}|\nabla W(\l x-y,t-s)|\frac{dy}{(1+|y|^2)^2}\,ds}_{\mathcal C_1(x,t)}\\
&+
\underbrace{\frac\l{1-e^{-t}}\int_0^t\int_{\{|y|\ge\frac\l2|x|\}}|\nabla W(\l x-y,t-s)|\frac{dy}{(1+|y|^2)^2}\,ds}_{\mathcal C_2(x,t)}.
\end{aligned}
\]
The first integral is easily controlled thanks to~\eqref{eq:pointwise.estimate.W} with $|\beta|=1$,
$$
{\mathcal C}_1(x,t)\le \frac {C\l}{1-e^{-t}}\frac{t^2}{\l^5|x|^5}\int_{\{|y|\le\frac\l2|x|\}}\frac{dy}{(1+|y|^2)^2}
\le C\quad\text{if }|x|\ge\frac12,\ \l\ge1,\text{ and }t=\l^2.
$$
On the other hand, using the estimates~\eqref{decaimiento-W} and~\eqref{eq:pointwise.estimate.W} with $|\beta|=1$, we get
\[\begin{aligned}
{\mathcal C}_2(x,t)&\le C\frac\l{1-e^{-t}}\Big(\int_0^{t-1}(t-s)^{-3/2}\,ds+\int_{t-1}^t (t-s)\,ds\Big)\int_{\{|y|\ge\frac\l2|x|\}}\frac{dy}{(1+|y|^2)^2}\\
&\le C\frac{\l^{-1}}{(1-e^{-t})|x|^2}\le C\quad\text{if }|x|\ge \frac12,\ \l\ge1,\ t=\lambda^2.
\end{aligned}\]

We finally estimate $\mathcal{D}$. To this aim we notice that $\partial_{x_i} {f_2}(x)=L\left(\frac{x_i}{1+|x|^2}\right)$. Hence, since
$$
\left|\Delta \left(\frac{x_i}{1+|x|^2}\right)\right|\le \frac C{(1+|x|^2)^2},\qquad \left|D^\beta\left(\frac{x_i}{1+|x|^2}\right)\right|\le \frac C{(1+|x|^2)^{2}},\quad |\beta|=4,
$$
using Taylor's expansion we
get $|\partial_{x_i} {f_2}(x)|\le \frac C{(1+|x|^2)^{2}}$, which implies
\[
|\mathcal D(x,t)|\le \frac{C\l}{(1+\l^2|x|^2)^2}\le C\quad\mbox{if }|x|\ge1/2.
\]

In conclusion, $|\nabla\psi^\l(x)|\le C$ if $|x|\ge\frac12$ and $\l\ge\l_0$. This yields, by taking $|y|\ge \l_0$, $x=y/|y|$ and $\l=|y|$,
\[
|y||\nabla\psi(y)|=|\nabla\psi^\l(x)|\le C.
\]
Recalling the definition of $\psi$, we see that this implies that
\[
|\nabla \phi(y)|\le \frac C{|y|} \quad\mbox{if }|y|\ge \l_0.
\]

The lemma is proved, since $\phi\in C^1(\R^2\setminus\H)$.
\end{proof}

\medskip

\noindent\emph{Remark. } As a corollary of Lemma~\ref{lemma:estimates.derivatives.phi} we have that
\begin{equation}
\label{eq:differences.phi}
|\phi(y)-\phi(x)|\le \frac C{|x|}\quad\mbox{if }x\in\mathbb{R}^2\setminus\H, \ |x-y|<d.
\end{equation}
Indeed, applying the Mean Value Theorem,
\[
|\phi(y)-\phi(x)|\le\frac{Cd}{|\xi|}\le\frac{Cd}{|x|-d}\le \frac {2Cd}{|x|}\quad\mbox{if }x\in\mathbb{R}^2\setminus\H,\  |x|\ge 2d, \ |x-y|<d.
\]
The estimate for $|x|\le 2d$ is immediate, since $\phi$ is locally bounded.

\section{A conservation law and some by-products}
\label{sect:conservation.law}
\setcounter{equation}{0}

Solutions to~\eqref{problem} satisfy a conservation law that will allow us to prove that the mass has a logarithmic decay rate. This is later used to obtain the decay rate of solutions, a super-solution that gives the right rates of decay in inner regions, the asymptotic limit of the \lq logarithmic'-momentum, and bounds for the first and second momenta when~\eqref{eq:cond.second.momentum} holds.
\begin{prop}
\label{prop:conservation.law}
Let $N=2$. Assume that $\H$, $J$ and $u_0$ satisfy~\eqref{hypotheses.H},~\eqref{hypotheses.J} and~\eqref{eq:hypotheses.u0.1} respectively. Let $\phi$ be the solution to \eqref{eq:stationary} and~\eqref{eq:stationary.behavior}, and $u$ the solution to problem~\eqref{problem}. Then,
\begin{equation}
\label{eq:conservation.law}
M_\phi(t):=\int_{\mathbb{R}^2} u(x,t)\phi(x)\,dx=\underbrace{\int_{\mathbb{R}^2} u_0\phi}_{M_\phi^*}\quad\mbox{for every } t\ge0.
\end{equation}
\end{prop}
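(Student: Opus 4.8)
\emph{Proof proposal.} The plan is to differentiate $M_\phi$ in time and show the derivative vanishes; the mechanism is the formal self-adjointness of $L$ in $L^2$ --- a consequence of the evenness of $J$ --- together with the two facts $L\phi=0$ in $\R^2\setminus\H$ and $u(\cdot,t)\equiv 0$ in $\H$ for every $t\ge 0$. First I would verify that $t\mapsto M_\phi(t)$ is locally absolutely continuous and that one may differentiate under the integral sign, so that for a.e.\ $t>0$
$$
M_\phi'(t)=\int_{\R^2}\partial_t u(x,t)\,\phi(x)\,dx.
$$
Since $u(\cdot,t)\equiv 0$ in $\H$ we also have $\partial_t u(\cdot,t)\equiv 0$ there, while $\partial_t u=J*u-u$ in $\R^2\setminus\H$; as $\phi=0$ in $\H$ this gives
$$
M_\phi'(t)=\int_{\R^2}\big((J*u)(x,t)-u(x,t)\big)\phi(x)\,dx.
$$
Then I would use Fubini's theorem and the evenness $J(x-y)=J(y-x)$ (recall $J$ is radial) to move the convolution onto $\phi$, that is $\int_{\R^2}(J*u)(x,t)\phi(x)\,dx=\int_{\R^2}u(y,t)(J*\phi)(y)\,dy$, obtaining
$$
M_\phi'(t)=\int_{\R^2}u(x,t)\,L\phi(x)\,dx.
$$
Finally $L\phi=0$ in $\R^2\setminus\H$ and $u(\cdot,t)=0$ in $\H$, so the last integrand vanishes identically, $M_\phi'(t)=0$, and therefore $M_\phi(t)\equiv M_\phi(0)=M_\phi^*$. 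Equivalently, and perhaps more cleanly, one may write $M_\phi(t)-M_\phi(0)=\int_0^t\!\!\int_{\R^2}\partial_s u(x,s)\phi(x)\,dx\,ds$ and carry out the same manipulations on the slab $[0,t]\times\R^2$, which sidesteps the a.e.\ differentiability discussion.

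The one point requiring care --- and the only place I anticipate any work --- is the integrability that legitimizes differentiating under the integral and applying Fubini, and this is precisely where the hypothesis $u_0\in L^1(\R^2,\log|x|\,dx)$ from~\eqref{eq:hypotheses.u0.1} enters. From~\eqref{eq:stationary.behavior} we have $|\phi(x)|\le C\big(1+\log(1+|x|)\big)$, and the same bound holds for $|(J*\phi)(x)|$ since $J$ is compactly supported; together with $\|u(\cdot,t)\|_{L^\infty}\le\|u_0\|_{L^\infty}$ and the local-in-time boundedness of $\int_{\R^2}u(x,t)\big(1+\log(1+|x|)\big)\,dx$ --- which follows from a routine variant of the existence argument, cf.~\eqref{eq:log.momentum.L.infty} --- one gets that $\int_{\R^2}\big((J*u)(x,t)+u(x,t)\big)\big(1+\log(1+|x|)\big)\,dx$ is bounded uniformly for $t$ in compact subsets of $[0,\infty)$. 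This dominates $|\partial_s u(x,s)\phi(x)|$ on such sets and justifies every step above. Note that~\eqref{eq:cond.second.momentum} is not needed here. There is no genuine obstacle: once this weighted bound is in hand, the statement reduces to the short computation displayed above.
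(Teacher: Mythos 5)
Your argument is correct and is essentially the approach the paper itself indicates: the paper omits the proof, stating it is "based on Fubini's Theorem" and identical in spirit to the $N=1$ and $N\ge3$ cases, which is exactly your computation (differentiate $M_\phi$, use the symmetry of $J$ and Fubini to pass $L$ onto $\phi$, then use $L\phi=0$ off $\H$ and $u=0$ in $\H$), with the integrability justified via the logarithmic weight as you do.
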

The proof, based on Fubini's Theorem, is entirely similar to the one for dimensions N=1 or $N\ge3$; see \cite{CEQW,CEQW3}.  Hence, we omit it.

A first consequence of this conservation law is a bound for the mass at time $t$ which in particular shows that the mass decays to 0 as $t\to\infty$.

\begin{prop}\label{prop-mass-decay} If in addition to the assumptions of Proposition~\ref{prop:conservation.law} we have also $u_0\in  L^\infty(\R^2)$, then there exist $t_0>0, C_0>0$ such that
\begin{equation}\label{mass-estimate}
M(t):=\int_{\mathbb{R}^2} u(x,t)\,dx\le \frac {C_0}{\log t}\quad\mbox{if }t\ge t_0.
\end{equation}
\end{prop}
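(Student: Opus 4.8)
The plan is to derive the mass estimate \eqref{mass-estimate} from the conservation law \eqref{eq:conservation.law} together with the global bound $\|u\|_{L^\infty(\R^2\times\R_+)}\le\|u_0\|_{L^\infty(\R^2)}$ from \eqref{eq:log.momentum.L.infty}. The key observation is that the stationary solution $\phi$ grows like $\log|x|$ at infinity (by \eqref{eq:stationary.behavior}), so that the finite quantity $M_\phi^*=\int_{\R^2}u_0\phi$ controls how much mass of $u(\cdot,t)$ can sit out near infinity: if a large portion of the mass were spread over a region $|x|\gtrsim R$, it would contribute roughly $(\log R)\,M(t)$ to $M_\phi(t)=M_\phi^*$, forcing $M(t)\lesssim M_\phi^*/\log R$. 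The role of the $L^\infty$ bound is to prevent mass from concentrating in a bounded region: by the support properties of $J$ and the Dirichlet condition on $\H$, the solution cannot be too large on, say, $|x|\le R$ at a time $t$ comparable to (a power of) $R$, because a pointwise bound of order $(t\log t)^{-1}$ type should hold there — but at this stage we only want the cruder conclusion, so it suffices to bound the mass in a ball by something decaying.

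Concretely, first I would fix $t$ large and split $M(t)=\int_{|x|\le R}u(x,t)\,dx+\int_{|x|>R}u(x,t)\,dx$ for a radius $R=R(t)$ to be chosen, say a small power of $t$. For the outer part, use $\phi(x)\ge c\log|x|\ge c\log R$ on $|x|>R$ (valid for $R$ large, by \eqref{eq:stationary.behavior}, since $\phi(x)=\log|x|+O(1)$), so that
\[
\int_{|x|>R}u(x,t)\,dx\le\frac1{c\log R}\int_{|x|>R}u(x,t)\phi(x)\,dx\le\frac{M_\phi^*}{c\log R},
\]
using $\phi\ge0$ and the conservation law. For the inner part $\int_{|x|\le R}u(x,t)\,dx$, the naive bound $\|u_0\|_\infty\,|B_R|$ is useless since it grows; instead I would obtain decay in time from a separate comparison argument: compare $u$ on the region $|x|\le R$ with a solution of the free equation and exploit the absorbing effect of the hole, or — more in the spirit of this paper — use a supersolution of the stationary problem rescaled appropriately to show $u(x,t)\le C(\log t)^{-1}$ pointwise for $|x|$ bounded, whence $\int_{|x|\le R}u\le CR^2(\log t)^{-1}$.

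The main obstacle is exactly this inner estimate: one needs a quantitative decay rate for $u$ near the hole before the full asymptotic analysis of Sections~\ref{sect:outer.limit}--\ref{sect:inner limit} is available, so the argument must be self-contained and use only the conservation law, the $L^\infty$ bound, and the existence of the stationary sub/super-solutions from Section~\ref{sect:stationary.problem}. The cleanest route is likely a bootstrap: the conservation law with $\phi$ growing logarithmically already forces most of the mass to escape to distances at least $t^\beta$ for some $\beta>0$ (otherwise $M_\phi(t)$ would be too small), but the $L^\infty$ bound says $u\le\|u_0\|_\infty$, and combining these with a covering of an annulus $R\le|x|\le 2R$ by balls of radius $d$ one shows the mass in that annulus is $\lesssim R^2\|u_0\|_\infty$, which is consistent only if the bulk sits where $\log|x|\sim\log t$. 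Making this precise, one chooses $R=t^{1/4}$, say: the outer part is then $\le M_\phi^*/(c\log R)=4M_\phi^*/(c\log t)$, and for the inner ball $|x|\le t^{1/4}$ one invokes a preliminary pointwise bound $u(x,t)\le C_1(t\log t)^{-1}$ for $|x|\le\sqrt t$ — which itself follows from comparison with a suitable rescaled supersolution built from $V_+$ — giving $\int_{|x|\le t^{1/4}}u\le C_1\pi t^{1/2}(t\log t)^{-1}=o((\log t)^{-1})$. Summing the two contributions yields $M(t)\le C_0/\log t$ for $t\ge t_0$, as claimed.
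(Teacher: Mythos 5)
Your outer estimate is fine and is exactly the paper's: on $\{|x|>R\}$ you combine $\phi\ge c\log R$, $\phi\ge0$, and the conservation law $\int_{\R^2}u(\cdot,t)\phi=M^*_\phi$ to get a contribution $O(1/\log R)$. The gap is in the inner ball, and it is a circularity. The pointwise bound $u(x,t)\le C_1(t\log t)^{-1}$ for $|x|\le\sqrt t$ that you invoke is precisely the content of Proposition~\ref{prop-decay-rate} and Corollary~\ref{prop:improved size estimate}, and in the paper both are proved \emph{after}, and \emph{from}, the mass estimate \eqref{mass-estimate} you are trying to establish: the $(t\log t)^{-1}$ decay is obtained by representing $u$ at time $t$ through the data at time $t/2$ and using $M(t/2)\le C_0/\log(t/2)$, and the refined inner bound by comparison with the supersolution $V$ of Lemma~\ref{lema:super1} needs that global $(t\log t)^{-1}$ bound on the lateral boundary $\{|x|^2\approx 2at\}$, where $V$ itself is only of size $c/(t\log t)$. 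With the information actually available at this stage ($\|u\|_{L^\infty}\le\|u_0\|_{L^\infty}$, or at best $u\le Ct^{-1}$), the comparison with a rescaled supersolution built from $V_+$ cannot be started, because the solution is not known to be below the barrier on the parabolic boundary. So, as made precise in your last paragraph, the argument assumes a consequence of the statement being proved.

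The fix is simple and is exactly the paper's route: the solution $u_C$ of the Cauchy problem in all of $\R^2$ with datum $u_0$ (no hole) is positive for $t>0$, hence a supersolution of \eqref{problem}, so $u\le u_C$; and the $L^1$--$L^\infty$ asymptotics of \cite{IR1} (or directly \eqref{decaimiento-W} with $|\beta|=0$ in the representation formula) give $u_C(\cdot,t)\le Ct^{-1}$ in dimension two. This crude, hole-free bound already suffices: with your choice $R=t^{1/4}$ the inner mass is at most $C\pi t^{1/2}\,t^{-1}=O(t^{-1/2})=o(1/\log t)$, while the paper splits at $|x|^2=t/\log t$ so that both contributions are of order $1/\log t$. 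Replacing your inner step by this comparison makes the proof correct and essentially identical to the paper's.
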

\begin{proof}
The solution, $u_C$, of the Cauchy problem with initial condition $u_0$ is a super-solution of the Cauchy-Dirichlet problem~\eqref{problem}, since $u_C(\cdot,t)>0$ for $t>0$. Therefore,  the asymptotic behavior of $u_C$, proved in \cite{IR1}, provides an estimate of the time decay rate of $u$: $u(x,t)\le u_C(x,t)\le C t^{-1}$. Though this rate is not optimal, as will turn out later, it is enough to get the decay rate of the mass. Indeed, let $\sigma>0$ such that $\log|x|\le \sigma\phi(x)$ for $x\notin\H$ (recall that $\phi>0$ on $\partial\H$). Then,
$$
M(t)\le\int_{|x|^2\le \delta(t) t}u(x,t)\,dx+\sigma\int_{|x|^2\ge \delta(t) t}\frac{u(x,t)\phi(x)}{\log|x|}\,dx\le C t^{-1}\delta t+\frac\sigma{\frac12\log(\delta t)}M^*_\phi
$$
if $\delta(t) t\ge \mathcal{R}^2$, with $\mathcal{R}$  as in~\eqref{hypotheses.H}.
Taking $\delta(t)=1/\log t$ we get that both terms on the right hand side of this estimate almost balance, and \eqref{mass-estimate}  follows.
\end{proof}

\medskip

\noindent\emph{Remark. } We will see later that $\log t M(t)$ has a nontrivial limit as $t\to\infty$. Thus, the decay rate in Proposition \ref{prop-mass-decay} is optimal.

\medskip

With this estimate for the decay rate of the mass we can improve the decay rate of $u(\cdot, t)$.

\begin{prop}\label{prop-decay-rate} Under the assumptions of Proposition~\ref{prop-mass-decay}, there exists a constant $C_1>0$ such that
\begin{equation}\label{decay-rate}
u(x,t)\le \frac {C_1}{t\log t}\quad\mbox{if } t\ge t_0.
\end{equation}
\end{prop}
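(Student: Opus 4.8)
The plan is to use the representation formula for $u$ in terms of the fundamental solution $F(x,t)=e^{-t}\delta(x)+W(x,t)$ of $\partial_t-L$, exactly as in the proof of Lemma~\ref{lemma:estimates.derivatives.phi}, but now exploiting the extra information we have just obtained, namely the mass bound $M(t)\le C_0/\log t$ from Proposition~\ref{prop-mass-decay}. Since $u$ itself solves $\partial_t u=Lu$ in $\R^2\setminus\H$ with $u=0$ in $\H$, we may regard $u$ as a solution of the inhomogeneous problem on all of $\R^2$ with forcing $f(x,t)=-\X_\H(x)(J*u)(x,t)$, which is supported in $\H$ and bounded there by $\|u_0\|_\infty$ (better: by $C/(t\log t)$ once we close the argument, but the crude bound suffices after a comparison/bootstrap). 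Writing out the variations-of-constants formula gives
\[
u(x,t)=e^{-t}u_0(x)+\int_{\R^2}W(x-y,t)u_0(y)\,dy-\int_0^t\!\!\int_\H e^{-(t-s)}(J*u)(y,s)\,dy\,ds-\int_0^t\!\!\int_\H W(x-y,t-s)(J*u)(y,s)\,dy\,ds.
\]
The first and third terms decay exponentially and are negligible. The main terms are the two containing $W$.

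First I would treat the term $\int_{\R^2}W(x-y,t)u_0(y)\,dy$. For this I would split the time interval, or rather split $u_0$ according to $|y|$: on $\{|y|\le \sqrt t\}$ use the pointwise bound $|W(x-y,t)|\le Ct^{-2}$ from~\eqref{decaimiento-W} (with $|\beta|=0$), so this piece is at most $Ct^{-2}\int_{|y|\le\sqrt t}u_0\le Ct^{-2}M(0)\cdot$(nothing helpful yet) — better, I would instead use the true structure: the point is that $\int_{\R^2}W(x-y,t)u_0(y)\,dy$ is precisely (up to the exponential term) the solution of the Cauchy problem with data $u_0$ evaluated at $(x,t)$, whose mass at time $t$ I can compare with $M_{\rm Cauchy}(t)=M(0)$, a constant. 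That is too big. The correct route is to compare directly with $u$: the Cauchy solution $u_C$ is a supersolution, but I need the finer bound. So instead I use the pointwise estimate~\eqref{decaimiento-W}, $|W(x,t)|\le C t^{-2}$ uniformly, giving $\int_{\R^2}W(x-y,t)u_0(y)\,dy\le Ct^{-2}M(0)$; this already beats $1/(t\log t)$ for $t$ large. Wait — but $u_0\in L^1$ only, so this piece is $O(t^{-2})$, done. For the fourth term, $\int_0^t\int_\H W(x-y,t-s)(J*u)(y,s)\,dy\,ds$, I use that $\H$ is bounded and $(J*u)(\cdot,s)$ is bounded on $\H$ by $\|u(\cdot,s)\|_\infty$. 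For $s$ bounded away from $t$ I apply~\eqref{decaimiento-W}: $|W(x-y,t-s)|\le C(t-s)^{-2}$, while for $s$ near $t$ I apply~\eqref{eq:pointwise.estimate.W}: $|W(x-y,t-s)|\le C(t-s)$. Since $|\H|$ is finite this gives $\int_0^t\int_\H|W(x-y,t-s)|\,dy\,ds\le C\big(\int_0^{t-1}(t-s)^{-2}ds+\int_{t-1}^t(t-s)\,ds\big)\le C$, a bounded quantity; multiplied by $\sup_{s}\|u(\cdot,s)\|_\infty\le\|u_0\|_\infty$ this is merely $O(1)$, not good enough. Here is where the mass decay enters: I bound $(J*u)(y,s)$ not by $\|u_0\|_\infty$ but by improving the bound iteratively, or — cleaner — I estimate $\int_\H(J*u)(y,s)\,dy\le C\int_{\R^2}u(z,s)\,dz=CM(s)\le C/\log s$ for $s\ge 2$, and combine with the $\sup$-over-$y$ bound $|W(x-y,t-s)|\le C\min\{(t-s)^{-2},(t-s)\}$ pulled out of the integral. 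This yields $\int_0^t\min\{(t-s)^{-2},t-s\}\,M(s)\,ds$, and since $M(s)\le C/\log s$ and the kernel $\min\{(t-s)^{-2},t-s\}$ is integrable in $s$ with the bulk of its mass near $s=t$ where $\log s\approx\log t$, I get a bound $\le C/(\log t)\cdot\int_0^t\min\{(t-s)^{-2},t-s\}\,ds\le C/\log t$. That is still a factor of $1/t$ short.

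The resolution, and the step I expect to be the genuine obstacle, is that the naive splitting above loses the decisive factor $1/t$: one must localize in space, not just use $\|W\|_\infty$. The right estimate is to keep $x$ in the region where the bound is to be proved, note that for $|x|$ large the pointwise bound~\eqref{eq:pointwise.estimate.W} $|W(x-y,t-s)|\le C(t-s)/|x-y|^4\le C(t-s)/|x|^4$ kicks in once $|y|\in\H$ and $|x|$ is large, while for $|x|\lesssim \sqrt t$ one uses instead the parabolic scaling $|W(x-y,t-s)|\le C(t-s)^{-2}$ and the mass factor; balancing $t^{-2}$ against the mass decay and integrating in $s$ with $M(s)\le C/\log s$ over $s\in(t/2,t)$ produces $t^{-2}\cdot t/\log t=1/(t\log t)$, and over $s\in(0,t/2)$ the integrand is controlled by the better decay of $W$ at large times. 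Concretely, I split $\int_0^t=\int_0^{t/2}+\int_{t/2}^t$: on $(t/2,t)$ use $M(s)\le C/\log(t/2)\le C/\log t$ and $\int_{t/2}^t(\text{bound on }W)\,ds\le Ct^{-1}$ (this is the crucial gain, coming from $\int_{t/2}^t(t-s)^{-2}ds$ being finite but the $\sup$-bound on $W$ over the spatial integral over $\H$ scaling like $(t-s)^{-2}$ which integrates against $ds$ to give something $O(t^{-1})$ after using that for $s<t-1$ we also have the bound $(t-s)^{-N/2-\cdots}$; this is exactly the computation in~\cite{TW} and in the analogous one-dimensional result~\cite{CEQW3}), and on $(0,t/2)$ use $M(s)\le M(0)$ together with $|W(x-y,t-s)|\le C(t-s)^{-2}\le Ct^{-2}$ and $|\H|<\infty$ to get $O(t^{-1})$. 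Adding the $O(t^{-2})$ contribution from the Cauchy part and the exponentially small terms, we arrive at $u(x,t)\le C_1/(t\log t)$ for $t\ge t_0$. The argument is the two-dimensional analogue of the decay-rate bootstrap in~\cite{CEQW3}, and I would simply mirror that proof, with~\eqref{decaimiento-W},~\eqref{eq:pointwise.estimate.W},~\eqref{eq:integral.estimate.W} and Proposition~\ref{prop-mass-decay} as the only inputs; the delicate point is organizing the space–time splitting so that the single factor $1/\log t$ from the mass and the single factor $1/t$ from the $W$-kernel integral are captured simultaneously rather than separately.
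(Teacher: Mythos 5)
There is a genuine gap, and it sits at the very first step of your estimate. In two dimensions, \eqref{decaimiento-W} with $|\beta|=0$ gives $\|W(\cdot,t)\|_{L^\infty}\le Ct^{-1}$, not $Ct^{-2}$ as you assert; consequently the term $\int_{\mathbb{R}^2}W(x-y,t)u_0(y)\,dy$ is only $O(t^{-1}M(0))$, which is precisely the decay rate of the Cauchy problem without a hole and is short of the claimed bound by the whole factor $1/\log t$. This is not a fixable bookkeeping issue within your scheme: since the forcing $f=-\chi_{\mathcal H}(J*u)$ is nonpositive, dropping it gives exactly $u\le u_C$ with data $u_0$, i.e.\ $u\le C/t$, and estimating the Duhamel terms in absolute value (as you do) can never \emph{improve} an upper bound — the logarithmic gain must come from the loss of mass caused by the hole, which your decomposition from time $0$ never exploits. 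Your closing paragraph inherits the same error ("$t^{-2}\cdot t/\log t$", "the $O(t^{-2})$ contribution from the Cauchy part"), so the pieces do not add up to $C/(t\log t)$.

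The missing idea — which you actually brush against when you write that "the correct route is to compare directly with $u$" — is to restart the evolution at time $t/2$ rather than at time $0$. The paper's proof is two lines: $u(\cdot,t)$ is dominated by the solution of the Cauchy problem in all of $\mathbb{R}^2$ with initial data $u(\cdot,t/2)$, so
\begin{equation*}
u(x,t)\le \mathrm{e}^{-t/2}\|u_0\|_{L^\infty(\mathbb{R}^2)}+\int_{\mathbb{R}^2}W(x-y,t/2)\,u(y,t/2)\,dy
\le \mathrm{e}^{-t/2}\|u_0\|_{L^\infty(\mathbb{R}^2)}+C\,t^{-1}M(t/2),
\end{equation*}
and now the single factor $t^{-1}$ comes from \eqref{decaimiento-W} while the single factor $1/\log t$ comes from the mass bound \eqref{mass-estimate} applied to $M(t/2)$, which is legitimate because the "initial data" $u(\cdot,t/2)$ already has small mass. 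No space–time splitting, no use of \eqref{eq:pointwise.estimate.W}, and no treatment of the hole term is needed once the restart is made. If you want to salvage your time-$0$ decomposition you would have to extract genuine cancellation from the negative hole term, which is a much harder route than the semigroup restart.
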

\begin{proof} We estimate  $u$ at time $t$ by the solution of the Cauchy problem with initial data $u(x,t/2)$. Thus, using the bound~\eqref{decaimiento-W} with $|\beta|=0$ for $W$, and the bound for the mass \eqref{mass-estimate}, we get
\[\begin{aligned}
u(x,t)&\le u_C(x,t)=e^{-t}u(x,t/2)+\int_{\mathbb{R}^2} W(x-y,t)u(y,t/2)\,dy\\
&\le e^{-t}\|u_0\|_{L^\infty(\R^2)}+Ct^{-1}M(t/2)\le \frac {C}{t\log t}\quad\mbox{if } t\ge t_0.
\end{aligned}
\]
\end{proof}

As we will see in Section~\ref{sect:outer.limit}, the bound provided by~\eqref{decay-rate} gives the optimal global decay rate for the solution. However, $u$ decays faster than $O\left((t\log t)^{-1}\right)$ in inner regions.  Once we have at hand the global decay rate, this last assertion is proved by means of comparison with a suitable super-solution that we construct next. A similar super-solution will be used later in the study of the inner limit. With that application in mind, we keep a parameter $\gamma\ge 2$ that will be set equal to 2 in the estimate of $u$.

\begin{lema} \label{lema:super1} Let $0<a<\a=\frac14\int_{\mathbb{R}^2}J(z)|z|^2\,dz$, and let $V_+$ be the super-solution (depending on the radius $r_0>0$)  to the stationary problem constructed in Lemma \ref{lema-supersolution}. There exist $b,T>0$ such that the function
\begin{equation*}\label{evolutionary-supersolution}
V(x,t)=\frac{\Gamma_a(x,t)\big(V_+(x)+b\big)}{\log^\gamma t},\qquad \Gamma_a(x,t)=\frac{e^{-\frac{|x|^2}{4at}}}{4\pi at},
\end{equation*}
satisfies
\begin{equation}
\label{eq:estimate.evolutionary.supersolution}
(\partial_t V-LV)(x,t)\ge\frac1{10}\Big(\frac \a a-1\Big)\frac{\Gamma_a(x,t)\big(V_+(x)+b\big)}{t\log^\gamma t}\quad\mbox{in }4r_0^2\le|x|^2\le 2at,\quad t\ge T.
\end{equation}
\end{lema}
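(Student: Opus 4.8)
The plan is to reduce the statement to a pointwise lower bound for $\partial_t\Phi-L\Phi$, where $\Phi(x,t):=\Gamma_a(x,t)\big(V_+(x)+b\big)$, so that $V=\Phi/\log^\gamma t$. Since $V_+$ does not depend on $t$ and $L$ acts only in $x$,
\[
\partial_t V-LV=\frac1{\log^\gamma t}\big(\partial_t\Phi-L\Phi\big)-\frac{\gamma}{t\log^{\gamma+1}t}\,\Phi,
\]
and because $\gamma/\log t\to0$ the last term is a small negative correction. It is therefore enough to show that $\partial_t\Phi-L\Phi\ge\frac15\big(\frac\a a-1\big)\Phi/t$ on $4r_0^2\le|x|^2\le 2at$ for $t$ and $b$ large; inserting this back and choosing $T$ so that $\gamma/\log t\le\frac1{10}\big(\frac\a a-1\big)$ for $t\ge T$ then gives~\eqref{eq:estimate.evolutionary.supersolution}.

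To estimate $\partial_t\Phi-L\Phi$ I would expand, writing $g:=V_++b$ and splitting
\[
\Gamma_a(y)g(y)-\Gamma_a(x)g(x)=\Gamma_a(x)\big(g(y)-g(x)\big)+g(x)\big(\Gamma_a(y)-\Gamma_a(x)\big)+\big(\Gamma_a(y)-\Gamma_a(x)\big)\big(g(y)-g(x)\big).
\]
Integrating against $J(x-y)$, using $\int_{\R^2}J=1$, $g(y)-g(x)=V_+(y)-V_+(x)$ and $LV_+=L(V_++b)$, this yields
\[
\partial_t\Phi-L\Phi=g\,(\partial_t\Gamma_a-L\Gamma_a)-\Gamma_a\,LV_+-R,\qquad R(x,t):=\int_{\R^2}J(x-y)\big(\Gamma_a(y)-\Gamma_a(x)\big)\big(V_+(y)-V_+(x)\big)\,dy.
\]
I would then treat the three terms in turn. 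The term $-\Gamma_a\,LV_+=\Gamma_a\big(-LV_+\big)\ge\kappa\,\Gamma_a/|x|^3\ge0$ is nonnegative by~\eqref{V+}; this is where the constraint $|x|^2\ge4r_0^2$ is used, since $-LV_+\ge0$ fails inside $B_{2r_0}\subset\H$. For the diffusive term, write $\partial_t\Gamma_a-L\Gamma_a=(a-\a)\Delta\Gamma_a+\big(\a\Delta\Gamma_a-L\Gamma_a\big)$; since $\Gamma_a$ solves the heat equation with diffusivity $a$, a direct computation gives $(a-\a)\Delta\Gamma_a=\big(\tfrac\a a-1\big)\big(1-\tfrac{|x|^2}{4at}\big)\tfrac{\Gamma_a}{t}\ge\tfrac12\big(\tfrac\a a-1\big)\tfrac{\Gamma_a}{t}$ precisely on $|x|^2\le 2at$ (here the upper restriction on $|x|$ enters, guaranteeing in particular $\Delta\Gamma_a<0$), while $\a\Delta\Gamma_a-L\Gamma_a$ is the fourth-order Taylor remainder of the identity used in the proof of Lemma~\ref{lema-subsolution}, applied now to $\Gamma_a$; using the self-similar form $\Gamma_a(x,t)=t^{-1}\Gamma_a(t^{-1/2}x,1)$ it is bounded by $C\sup_{|\xi-x|<d}|D^\beta\Gamma_a(\xi,t)|\le Ct^{-3}$ for $|\beta|=4$, so that, since $g=V_++b\le C\log t$ on the region, $g\,|\a\Delta\Gamma_a-L\Gamma_a|\le C\log t\cdot t^{-3}$ is negligible compared with $g\,\Gamma_a/t\gtrsim t^{-2}$.

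The genuinely delicate term is $R$, and the main obstacle is its behaviour near the hole, where $V_+$ is only locally bounded --- it is piecewise constant there, so $|V_+(y)-V_+(x)|$ need not be small. Using the Mean Value Theorem, $|\nabla\Gamma_a(x,t)|=\tfrac{|x|}{2at}\Gamma_a(x,t)$, and the fact that $\Gamma_a(\xi,t)$ and $\Gamma_a(x,t)$ are comparable for $|\xi-x|<d$ in this region (their Gaussian exponents differ by $o(1)$ once $|x|\le\sqrt{2at}$), one obtains $|\Gamma_a(y)-\Gamma_a(x)|\le C\,\tfrac{1+|x|}{t}\,\Gamma_a(x,t)$ for $|x-y|<d$. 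For $|x|\ge D+d$, $V_+$ agrees near $x$ with the smooth function $\log(|\cdot|-r_0)+\gamma$, so $|V_+(y)-V_+(x)|\le Cd/|x|$, the factors $|x|$ cancel, and $|R|\le C\,\Gamma_a/t$ with $C$ independent of $b$. For $4r_0^2\le|x|^2\le(D+d)^2$, a fixed bounded annulus, $V_+$ is merely bounded, so $|V_+(y)-V_+(x)|\le C$, while $|\Gamma_a(y)-\Gamma_a(x)|\le Ct^{-2}$ and $\Gamma_a(x,t)\ge\tfrac1{8\pi at}$ on $|x|^2\le 2at$, whence again $|R|\le C\,\Gamma_a/t$ with $C$ independent of $b$. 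Collecting the three contributions and using $V_+\ge0$,
\[
\partial_t\Phi-L\Phi\ \ge\ \Big(\tfrac12\big(\tfrac\a a-1\big)(V_++b)-C\Big)\tfrac{\Gamma_a}{t}\ -\ \varepsilon(t)\,(V_++b)\tfrac{\Gamma_a}{t},\qquad \varepsilon(t)\to0,
\]
and it then suffices to fix $b$ so large that $\tfrac12\big(\tfrac\a a-1\big)b-C\ge\tfrac15\big(\tfrac\a a-1\big)b$ and take $t$ large, which gives $\partial_t\Phi-L\Phi\ge\tfrac15\big(\tfrac\a a-1\big)\Phi/t$ and, combined with the first paragraph, proves~\eqref{eq:estimate.evolutionary.supersolution}. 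In short, the only real difficulty is that near the hole $V_+$ has genuine jumps and the cross term $R$ is of the same order as the leading diffusive term $\tfrac12(\tfrac\a a-1)(V_++b)\Gamma_a/t$; this is precisely why $b$ is kept as a free parameter, to be taken large enough for the diffusion to win, whereas away from the hole the bound $|\nabla V_+|\lesssim1/|x|$ makes $R$ of lower order. (A minor technical point: \eqref{V+} is stated for $x\notin\H$, but its proof actually gives $-LV_+\ge\kappa/|x|^3$ for all $|x|\ge2r_0$, which is what is used, and the restriction $|x|^2\ge4r_0^2$ is sharp, since $-LV_+\le0$ on $B_{2r_0}$.)
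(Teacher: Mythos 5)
Your proposal is correct and follows essentially the same route as the paper: the same splitting into the $\log^{-\gamma}t$ time factor, the term $(V_++b)(\partial_t\Gamma_a-L\Gamma_a)$ handled by Taylor expansion against $\a\Delta$ on $|x|^2\le 2at$, the nonnegative term $-\Gamma_a LV_+$ from the stationary super-solution, and the cross term estimated via the Mean Value Theorem separately for $|x|$ large (where $|\nabla V_+|\le C/|x|$) and on a fixed bounded annulus, with $b$ chosen large at the end since the cross-term constant is independent of $b$. The only differences are cosmetic (e.g.\ bounding the fourth-order remainder by $Ct^{-3}$ and using $\Gamma_a\gtrsim 1/t$ on the region instead of the paper's pointwise bound $C\Gamma_a(x,t)/t^2$), and your side remark that the bound $-LV_+\ge\kappa/|x|^3$ holds on all of $\{|x|\ge 2r_0\}$ by the construction in Lemma~\ref{lema-supersolution} is exactly the fact the paper implicitly uses.
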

\begin{proof} We have
\begin{equation}
\label{eq:decomposition.nonlocal.heat.V}
\begin{array}{rcl}
\displaystyle(\partial_t V-LV)(x,t)&=&\displaystyle\frac{V_+(x)+b}{\log^\gamma t}\left(\partial_t\Gamma_a-L\Gamma_a\right)(x,t)\\[10pt]
&&\displaystyle-\gamma\frac{\Gamma_a(x,t)(V_+(x)+b)}{t\log^{\gamma+1} t}-\frac{\Gamma_a(x,t)LV_+(x)}{\log^\gamma t}
\\[10pt]
&&\displaystyle -\underbrace{\frac{1}{\log^\gamma t}
\int_{\mathbb{R}^2} J(x-y)(\Gamma_a(y,t)-\Gamma_a(x,t))(V_+(y)-V_+(x))\,dy}_{\mathcal{A}(x,t)}.
\end{array}
\end{equation}

In order to estimate $\partial_t\Gamma_a-L\Gamma_a$, we observe that Taylor's expansion yields
$$
L\Gamma_a(x,t)=\a \Delta \Gamma_a(x,t)+\frac1{4!}\sum_{|\beta|=4}\int_{\mathbb{R}^2}J(|x-y|)D_x^\beta \Gamma_a(\xi,t)(x-y)^\beta\,dy
$$
for some $\xi$ lying in the segment that joins $x$ and $y$. Hence, since $\partial_t\Gamma_a=a\Delta\Gamma_a$,
$$
(\partial_t\Gamma_a-L\Gamma_a)(x,t) = \Big(1-\frac \a a\Big)\partial_t\Gamma_a(x,t)-\frac1{4!}\sum_{|\beta|=4}\int_{\mathbb{R}^2}J(|x-y|)D_x^\beta \Gamma_a(\xi,t)(x-y)^\beta\,dy.
$$
To deal with  the main term, we compute
$$
\displaystyle\partial_t\Gamma_a(x,t)=\left(\frac{|x|^2}{4at}-1\right)\frac{\Gamma_a(x,t)}t\le -\frac{\Gamma_a(x,t)}{2t}<0 \quad\mbox{if }|x|^2\le 2a t.
$$
An estimate for the error term  follows from
$$
\displaystyle|D^\beta_x\Gamma_a|(\xi,t)\le\frac{C\Gamma_a(\xi,t)}{t^2}\le \frac{C\Gamma_a(x,t)}{t^2} \quad\mbox{if } |\beta|=4,\ |x|^2\le 2a t.
$$
Thus, since $0<a<\a$, for some $t_0$ large enough we have the lower bound
\begin{equation}
\label{eq:estimate.nonlocal.heat.Gamma}
(\partial_t\Gamma_a-L\Gamma_a)(x,t) \ge \frac12\left(\frac \a a-1\right)\frac{\Gamma_a(x,t)}{t}-\frac{C\Gamma_a(x,t)}{t^2}
\ge \frac14\Big(\frac \a a-1\Big)\frac{\Gamma_a(x,t)}{t},\quad t\ge t_0.
\end{equation}

Let us now bound the integral term $\mathcal{A}(x,t)$. On the one hand,
$$
|V_+(y)-V_+(x)|\le \log(D+d-r_0)-a_{k+1}\quad\text{if }x,y\in B_{D+d}(0).
$$
On the other hand, by the Mean Value Theorem,
$$
|V_+(y)-V_+(x)|=\left|\nabla V_+(\xi)\cdot(y-x)\right|\le \frac{d}{|\xi|-r_0}\le\frac{d}{|x|-d-r_0}\le\frac{C}{|x|}\quad\text{if }x,y\in\R^2\setminus B_D(0).
$$
The difference between the values of $\Gamma_a$ at $x$ and $y$ is also estimated thanks to the Mean Value Theorem, using that
$$
|\nabla\Gamma_a(\xi,t)|=\frac{|\xi|\,\Gamma_a(\xi,t)}{2at}\le \frac{C|x|\,\Gamma_a(x,t)}{t}.
$$
Gathering all this information, we obtain
\begin{equation}
\label{eq:estimate.integral.term}
|\mathcal{A}(x,t)|
\le C_{d, a}\frac{\Gamma_a(x,t)}{t\log^\gamma t}\quad\text{for }4r_0^2\le |x|^2\le 2at.
\end{equation}

If we use estimates~\eqref{eq:estimate.nonlocal.heat.Gamma} and \eqref{eq:estimate.integral.term}, together with~\eqref{V+}, in~\eqref{eq:decomposition.nonlocal.heat.V}, we get
$$
\begin{aligned}
(\partial_t V-LV)(x,t)&\ge \frac{\Gamma_a(x,t)}{t\log^\gamma t}\left(\frac14\Big(\frac \a a-1\Big)\big(V_+(x)+b\big)-\gamma\frac{V_+(x)+b}{\log t}-C_{d, a}\right)\\
&\ge\frac1{10}\Big(\frac \a a-1\Big)\frac{\Gamma_a(x,t)\big(V_+(x)+b\big)}{t\log^\gamma t}
\end{aligned}
$$
if $b$ is large enough and $t\ge T$, with $T$ large.
\end{proof}

Now we can obtain the time decay rates in all near field scales.

\begin{coro}\label{prop:improved size estimate}
Let $0<a<\a$. Under the assumptions of Proposition~\ref{prop-mass-decay}, there exist constants $C,b,T>0$ such that
\[
u(x,t)\le C\frac{e^{-\frac{|x|^2}{4at}}(V_+(x)+b)}{t(\log t)^2}\quad\mbox{if } |x|^2\le 2at,\quad t\ge T.
\]
\end{coro}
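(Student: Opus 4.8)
The plan is to prove the bound by comparing $u$ with a constant multiple of the super-solution $V$ of Lemma~\ref{lema:super1}, used with the choice $\gamma=2$, so that $V(x,t)=\Gamma_a(x,t)\bigl(V_+(x)+b\bigr)(\log t)^{-2}$, on the time-dependent region
\[
\mathcal Q:=\bigl\{(x,t):4r_0^2\le |x|^2\le 2at,\ t\ge T\bigr\}.
\]
I would keep $b,T>0$ as provided by Lemma~\ref{lema:super1}, enlarging $T$ whenever convenient (Lemma~\ref{lema:super1} remains valid). Choosing, if needed, $r_0<1$ so that $\overline{B_{2r_0}(0)}\subset B_2(0)\subset\H$, the estimate is trivial on $\{|x|^2\le 4r_0^2\}$, where $u\equiv 0$ while $V>0$; so everything reduces to proving $u\le CV$ on $\mathcal Q$ for a constant $C$ independent of time. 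It is useful to note first that on $\mathcal Q$ one has $\Gamma_a\le(4\pi at)^{-1}$ and $V_+(x)+b\le C\log t$ (since $|x|\le\sqrt{2at}$ and $V_+(x)=\log(|x|-r_0)+\gamma$ for $|x|\ge D$, see Lemma~\ref{lema-supersolution}), hence $V\le C(t\log t)^{-1}$; together with $u\le C_1(t\log t)^{-1}$ from Proposition~\ref{prop-decay-rate}, this makes $w:=CV-u$ vanish uniformly on $\mathcal Q$ as $t\to\infty$, so that a negative infimum of $w$ over $\mathcal Q$ would be attained at a finite point.

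Two quantitative inputs feed the comparison: the global rate $u\le C_1(t\log t)^{-1}$ of Proposition~\ref{prop-decay-rate}, and sharp lower bounds for $V$. For the latter I would again use $V_+(x)=\log(|x|-r_0)+\gamma$ for $|x|\ge D$: on the parabola $\{|x|^2=2at\}$, and on the $d$-thick outer shell $\mathcal S:=\{2at\le |x|^2\le(\sqrt{2at}+d)^2\}$, one has $|x|^2/(4at)\le \tfrac32$ and $V_+(x)+b\ge\tfrac14\log t$ for $t\ge T$ (enlarging $T$), so $V(x,t)\ge c_0(t\log t)^{-1}$ on $\{|x|^2=2at\}\cup\mathcal S$; moreover $V(\cdot,T)\ge c_T>0$ on the compact slice $\{|x|^2\le 2aT\}$. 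Choosing $C$ large --- depending on $C_1$, $\|u_0\|_{L^\infty}$, $T$, $c_0$ and $c_T$ --- then gives $CV\ge u$ on the parabolic boundary of $\mathcal Q$, i.e. on $\{t=T,\ |x|^2\le 2aT\}$ and on $\{|x|^2=2at,\ t\ge T\}$, and also on the whole shell $\mathcal S$.

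The comparison itself I would run by contradiction. If $\inf_{\mathcal Q}w<0$, it is attained at some $(x_0,t_0)$ which, by the boundary inequalities above, satisfies $t_0>T$, $4r_0^2<|x_0|^2<2at_0$, and $x_0\in\R^2\setminus\H$ (since $w>0$ wherever $u=0$). Then $\partial_t w(x_0,t_0)\le 0$; and for the nonlocal term, if $|x_0-y|<d$ then $|y|<\sqrt{2at_0}+d$, so either $|y|^2\le 2at_0$ and $w(y,t_0)\ge w(x_0,t_0)$ by minimality, or $y\in\mathcal S$ and $w(y,t_0)\ge 0>w(x_0,t_0)$; in either case $w(y,t_0)\ge w(x_0,t_0)$, whence $Lw(x_0,t_0)\ge 0$ and so $(\partial_t-L)w(x_0,t_0)\le 0$. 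But $(\partial_t-L)u(x_0,t_0)=0$ because $x_0\notin\H$, while Lemma~\ref{lema:super1} with $\gamma=2$ gives
\[
(\partial_t-L)V(x_0,t_0)\ge\frac1{10}\Bigl(\frac{\a}{a}-1\Bigr)\frac{\Gamma_a(x_0,t_0)\bigl(V_+(x_0)+b\bigr)}{t_0(\log t_0)^2}>0,
\]
so $(\partial_t-L)w(x_0,t_0)>0$, a contradiction. Since $C$ does not depend on how large $t_0$ is allowed to be, this yields $u\le CV$ on all of $\mathcal Q$, which combined with the trivial bound on $\{|x|^2\le 4r_0^2\}$ is the asserted estimate, after renaming $C/(4\pi a)$.

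I expect the main obstacle to be the interaction between the finite range $d$ of the operator $L$ and the curved, moving boundary $\{|x|^2=2at\}$: the minimum-principle step closes only because $u\le CV$ is known not merely on this parabola but on a $d$-thick shell $\mathcal S$ outside it, and securing that forces one to match the optimal global rate $(t\log t)^{-1}$ of Proposition~\ref{prop-decay-rate} against the size of $V$ near the parabola, which is of order precisely $\frac{\log t}{t(\log t)^2}=\frac1{t\log t}$ because there $V_+(x)\sim\log|x|\sim\tfrac12\log t$, not $O(1)$. A secondary technical point is that $V_+$ is only locally bounded (piecewise constant near the hole, hence merely upper semicontinuous), so the existence of the minimizer $(x_0,t_0)$ and the sign of $\partial_tw$ there should be justified by a small perturbation/approximation argument, or by invoking the semicontinuous comparison principle of~\cite{CEQW}.
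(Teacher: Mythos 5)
Your proposal is correct and is essentially the paper's own proof: the paper also obtains the bound by comparing $u$ with a large multiple of the super-solution $V$ of Lemma~\ref{lema:super1} with $\gamma=2$, using the global rate of Proposition~\ref{prop-decay-rate}, the lower bound $V\ge \kappa/(t\log t)$ on a thin shell just outside $\{|x|^2=2at\}$, and $V(\cdot,T)\ge c>0$ on the initial slice. You have merely written out the minimum-principle mechanics (the $d$-thick shell and the semicontinuity caveat for the piecewise-constant $V_+$) that the paper leaves implicit.
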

\begin{proof}
Once we have established the global decay rate~\eqref{decay-rate}, the result follows by comparison with a large enough multiple of  the super-solution $V$ constructed in Lemma \ref{lema:super1} with $\gamma=2$, taking $T\ge t_0$,  since
\[
V(x,t)\ge\frac{\kappa}{t\log t}\quad\mbox{if } 2at\le |x|^2\le 2at+d
\]
for a certain constant $\kappa>0$, and
\[
V(x,T)\ge c>0 \quad\mbox{if } |x|^2\le 2aT.
\]
\end{proof}

A fourth by-product of the conservation law  is the large time behavior of the \lq\lq logarithmic momentum''.
\begin{coro}
\label{cor:ltb.log.momentum}
Under the assumptions of Proposition~\ref{prop-mass-decay},
$$
\lim_{t\to\infty} \int_{\mathbb{R}^2}u(x,t)\log|x|\,dx=M^*_\phi.
$$
\end{coro}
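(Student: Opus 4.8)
The plan is to combine the conservation law of Proposition~\ref{prop:conservation.law}, which gives $\int_{\R^2}u(x,t)\phi(x)\,dx=M_\phi^*$ for all $t\ge0$, with the fact that $\phi(x)-\log|x|$ is bounded on $\R^2\setminus\H$ (this is~\eqref{eq:stationary.behavior}) and that $\phi=0$ on $\H$. Writing $\phi(x)=\log|x|+r(x)$ with $r\in L^\infty(\R^2\setminus\H)$, we have, for every $t>0$,
\[
M_\phi^*=\int_{\R^2}u(x,t)\phi(x)\,dx=\int_{\R^2\setminus\H}u(x,t)\log|x|\,dx+\int_{\R^2\setminus\H}u(x,t)r(x)\,dx,
\]
and since $u(\cdot,t)=0$ in $\H$, the first integral is exactly $M_{\rm log}(t)=\int_{\R^2}u(x,t)\log|x|\,dx$ (note $\log|x|$ changes sign near the origin, but $u$ vanishes on $B_2(0)\subset\H$, so there is no integrability issue). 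Hence it suffices to prove that $\int_{\R^2\setminus\H}u(x,t)r(x)\,dx\to0$ as $t\to\infty$; since $r$ is bounded, this will follow once we know $M(t)=\int_{\R^2}u(x,t)\,dx\to0$, which is precisely the content of Proposition~\ref{prop-mass-decay} (indeed $M(t)=O((\log t)^{-1})$). Therefore $M_{\rm log}(t)=M_\phi^*-\int_{\R^2\setminus\H}u(x,t)r(x)\,dx\to M_\phi^*$.

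More explicitly, first I would fix $\sigma>0$ and $R>\mathcal R$ so that $|r(x)|\le C_r$ for $|x|\ge2$ (away from $\H$, where $r$ is continuous and asymptotically bounded), and estimate
\[
\Big|\int_{\R^2\setminus\H}u(x,t)r(x)\,dx\Big|\le C_r\int_{\R^2}u(x,t)\,dx=C_r M(t).
\]
Then invoke~\eqref{mass-estimate} from Proposition~\ref{prop-mass-decay} to get $|M_{\rm log}(t)-M_\phi^*|\le C_r C_0/\log t$ for $t\ge t_0$, which not only gives the limit but also a rate. I would be slightly careful about the region near the hole: on $\H$ the integrand $u\log|x|$ and $u\,r$ both vanish because $u(\cdot,t)\equiv0$ there, and on the bounded annulus between $\partial B_2(0)$ and $\partial B_{\mathcal R}(0)$ (the part of $\H^c$ inside $B_{\mathcal R}$) both $\log|x|$ and $r(x)=\phi(x)-\log|x|$ are bounded, so there is nothing delicate; the only structural input needed is the global boundedness of $r$ on $\R^2\setminus\H$ guaranteed by~\eqref{eq:stationary.behavior}.

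There is essentially no obstacle here: this corollary is a direct consequence of the conservation law plus the mass-decay estimate, both of which are already established earlier in the section. The only point requiring a line of justification is the splitting $\phi=\log|\cdot|+r$ with $r$ bounded, and the observation that $u(\cdot,t)\phi$ and $u(\cdot,t)\log|\cdot|$ agree as integrable functions because $u(\cdot,t)$ is supported in $\R^2\setminus\H\supset$ (and in fact vanishes on $B_2(0)$, removing any worry about the logarithmic singularity of $\log|x|$ at the origin). Thus the proof is short and the argument is: conservation law $\Rightarrow$ $M_{\rm log}(t)$ differs from $M_\phi^*$ by $O(M(t))$ $\Rightarrow$ conclude via Proposition~\ref{prop-mass-decay}.
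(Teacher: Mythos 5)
Your argument is correct and is essentially the paper's own proof: both rest on the conservation law of Proposition~\ref{prop:conservation.law}, the bound $\phi-\log|\cdot|\in L^\infty(\R^2\setminus\H)$ from~\eqref{eq:stationary.behavior}, and the mass decay~\eqref{mass-estimate}, yielding $|M_{\rm log}(t)-M_\phi^*|\le C/\log t$. The extra care you take about the hole (where $u$ vanishes, so the logarithmic singularity is harmless) is exactly the implicit justification in the paper's one-line estimate.
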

\begin{proof}
Using the conservation law~\eqref{eq:conservation.law}, together with the mass decay estimate~\eqref{mass-estimate} and the estimate~\eqref{eq:stationary.behavior} for the stationary solution, we immediately obtain
$$
\left|\int_{\mathbb{R}^2}u(x,t)\log|x|\,dx-M^*_\phi\right|\le
\int_{\mathbb{R}^2}u(x,t)|\log|x|-\phi(x)|\le \frac {C}{\log t}\quad\mbox{if } t\ge t_0,
$$
from where the result follows.
\end{proof}

As mentioned in the Introduction, if $u_0$ satisfies~\eqref{eq:cond.second.momentum}, the second momentum of $u(\cdot,t)$ is finite for all times. We now obtain an estimate for its growth, which follows from the mass estimate~\eqref{mass-estimate}. This estimate for the second momentum will be used later to study the limit of the rescaled mass.
\begin{lema}\label{lema:second-momentum} If, in addition to the assumptions of Proposition~\ref{prop-mass-decay}, $u_0$ satisfies~\eqref{eq:cond.second.momentum}, then there are constants $t_1>0$ and $C_2>0$ such that
\begin{equation}\label{second-momentum}
M_2(t):=\int_{\mathbb{R}^2} u(x,t)|x|^2\,dx\le C_2\frac t{\log t}\quad\mbox{for }t\ge t_1.
\end{equation}
\end{lema}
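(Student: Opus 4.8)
The plan is to obtain a differential inequality for $M_2$ directly from the equation, exploiting the radial symmetry of $J$ and the fact that $u(\cdot,t)$ vanishes on $\H$, and then to integrate it using the mass decay estimate~\eqref{mass-estimate}. Formally, since $u(\cdot,t)\equiv0$ in $\H$,
$$
M_2'(t)=\int_{\R^2}\partial_tu(x,t)|x|^2\,dx=\int_{\R^2\setminus\H}\big(J*u-u\big)(x,t)|x|^2\,dx=\int_{\R^2}\big(J*u-u\big)(x,t)|x|^2\,dx-\int_{\H}(J*u)(x,t)|x|^2\,dx,
$$
and the last integral is nonnegative because $u\ge0$. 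By Fubini's theorem, the change of variables $z=y-x$, and the symmetry of $J$,
$$
\int_{\R^2}(J*u)(x,t)|x|^2\,dx=\int_{\R^2}u(y,t)\Big(\int_{\R^2}J(z)|y+z|^2\,dz\Big)dy=\int_{\R^2}u(y,t)\big(|y|^2+4\a\big)\,dy=M_2(t)+4\a M(t),
$$
where we used $\int_{\R^2}zJ(z)\,dz=0$ (radial symmetry) and $\int_{\R^2}|z|^2J(z)\,dz=4\a$, see~\eqref{eq:definition.alpha}. Hence $M_2'(t)\le4\a M(t)$.

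To make this rigorous despite the slow decay of $u$, I would replace $|x|^2$ by the bounded Lipschitz functions $\varphi_n(x)=\min\{|x|^2,n\}$ and work with $M_2^{(n)}(t)=\int_{\R^2}u(x,t)\varphi_n(x)\,dx$, for which differentiation under the integral is legitimate. Repeating the computation above with $\varphi_n$ in place of $|x|^2$ gives $\frac{d}{dt}M_2^{(n)}(t)\le\int_{\R^2}u(x,t)(J*\varphi_n-\varphi_n)(x)\,dx$; since $(J*\varphi_n-\varphi_n)(x)=4\a$ for $|x|\le\sqrt n-d$, vanishes for $|x|\ge\sqrt n+d$, and is $O(\sqrt n)$ in between, and since $\int_{|x|\ge\sqrt n-d}u(x,t)\,dx\le(\sqrt n-d)^{-2}M_2(t)$, we get $\frac{d}{dt}M_2^{(n)}(t)\le4\a M(t)+C n^{-1/2}M_2(t)$ for $n$ large. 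Using~\eqref{eq:cond.second.momentum}, which guarantees $u(\cdot,t)\in L^1(\R^2,|x|^2\,dx)$ for every $t$ with local-in-time control of $M_2$, we may integrate this inequality on $[t_0,t]$ and let $n\to\infty$; the left-hand side converges to $M_2(t)$ by monotone convergence, the $n^{-1/2}$ term disappears, and we are left with
$$
M_2(t)\le M_2(t_0)+4\a\int_{t_0}^tM(s)\,ds\qquad\text{for all }t\ge t_0.
$$

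To finish, insert the mass estimate $M(s)\le C_0/\log s$ from~\eqref{mass-estimate} and use the elementary bound, obtained by splitting the integral at $\sqrt t$,
$$
\int_{t_0}^t\frac{ds}{\log s}\le\frac{\sqrt t}{\log t_0}+\frac{2t}{\log t}\le\frac{3t}{\log t}\qquad\text{for }t\text{ large enough},
$$
so that $M_2(t)\le M_2(t_0)+12\a C_0\,t/\log t$; absorbing the constant $M_2(t_0)$ into the last term for $t\ge t_1$ large gives~\eqref{second-momentum}. The only real subtlety is the rigorous justification of $M_2'(t)\le4\a M(t)$: one must ensure that $|x|^2u(\cdot,t)$ and $|x|^2(J*u)(\cdot,t)$ are integrable and that the contribution of the hole has the favorable sign. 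The truncation above handles this, the key point being that $\sqrt n\int_{|x|\sim\sqrt n}u(x,t)\,dx\to0$ as $n\to\infty$ because $u(\cdot,t)\in L^1(\R^2,|x|^2\,dx)$; once the differential inequality is in hand, the rest is a one-line integration.
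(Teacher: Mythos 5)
Your proof is correct and follows essentially the same route as the paper: the key step in both is that $M_2'(t)\le\int_{\R^2}u\,L|x|^2\,dx=4\a M(t)\le C/\log t$ (the hole contribution having the favorable sign since $u=0$ on $\H$ and $u\ge0$), followed by integration in time. The only difference is that you add a truncation with $\varphi_n=\min\{|x|^2,n\}$ to justify differentiating under the integral, a point the paper handles implicitly via the finiteness of the second momentum guaranteed by~\eqref{eq:cond.second.momentum}.
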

\begin{proof}
There holds,
\[\begin{aligned}
M_2'(t)&=\int_{\mathbb{R}^2} \partial_t u(x,t)|x|^2\,dx=\int_{\R^2\setminus\H} Lu(x,t)|x|^2\,dx\\
&= \int_{\R^2} Lu(x,t)|x|^2\,dx-\int_\H\int_{\mathbb{R}^2} J(x-y)u(y,t)|x|^2\,dy\,dx\\
&\le \int_{\R^2} Lu(x,t)|x|^2\,dx=\int_{\mathbb{R}^2} u(x,t) L|x|^2\,dx\\
&=c\int_{\mathbb{R}^2} u(x,t)\le \frac{cC_0}{\log t}\quad\mbox{for }t\ge t_0.
\end{aligned}
\]
 Integrating this inequality, we obtain \eqref{second-momentum}, for some $t_1\ge t_0$.
\end{proof}
The control of the growth of the second momentum yields in turn a control on the growth of the first momentum.
\begin{lema}\label{lema:first-momentum} Under the assumptions of Lemma~\ref{lema:second-momentum},
\begin{equation}\label{first-momentum}
M_1(t):=\int_{\mathbb{R}^2} u(x,t)|x|\,dx\le C_3\frac {t^{1/2}}{\log t}\quad\mbox{for }t\ge t_1.
\end{equation}
\end{lema}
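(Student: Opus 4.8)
The plan is to deduce the bound on $M_1(t)$ directly from the two estimates already at hand: the mass decay~\eqref{mass-estimate} and the second-momentum growth~\eqref{second-momentum}. Since $u(\cdot,t)\ge 0$ and, for $t\ge t_1$, both $M(t)=\int_{\mathbb{R}^2}u(x,t)\,dx$ and $M_2(t)=\int_{\mathbb{R}^2}u(x,t)|x|^2\,dx$ are finite (by~\eqref{eq:log.momentum.L.infty} and Lemma~\ref{lema:second-momentum}), I would split $u(x,t)|x|=u(x,t)^{1/2}\cdot\big(u(x,t)^{1/2}|x|\big)$ and apply the Cauchy--Schwarz inequality in $L^2(\mathbb{R}^2)$ to obtain
\[
M_1(t)=\int_{\mathbb{R}^2}u(x,t)|x|\,dx\le\Big(\int_{\mathbb{R}^2}u(x,t)\,dx\Big)^{1/2}\Big(\int_{\mathbb{R}^2}u(x,t)|x|^2\,dx\Big)^{1/2}=M(t)^{1/2}M_2(t)^{1/2}.
\]

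It then remains only to insert the quantitative bounds. For $t\ge t_1$ (which we may assume to be $\ge t_0$, enlarging $t_1$ if necessary so that both previous estimates hold simultaneously), \eqref{mass-estimate} gives $M(t)\le C_0/\log t$ and~\eqref{second-momentum} gives $M_2(t)\le C_2\,t/\log t$, whence
\[
M_1(t)\le\Big(\frac{C_0}{\log t}\Big)^{1/2}\Big(\frac{C_2\,t}{\log t}\Big)^{1/2}=\sqrt{C_0C_2}\,\frac{t^{1/2}}{\log t},
\]
which is~\eqref{first-momentum} with $C_3=\sqrt{C_0C_2}$.

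There is essentially no obstacle in this argument: it is a one-line interpolation between the $L^1$ norm and the second moment. The only mild points are bookkeeping — choosing $t_1$ large enough that~\eqref{mass-estimate} and~\eqref{second-momentum} are simultaneously available — and noting that the finiteness of the two integrals on the right (guaranteed for positive times under~\eqref{eq:hypotheses.u0.1}--\eqref{eq:cond.second.momentum}) legitimizes the use of Cauchy--Schwarz.
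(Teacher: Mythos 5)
Your argument is correct. The only difference from the paper is the technical device: the paper splits the integral at the radius $|x|=t^{1/2}$, bounding $|x|\le t^{1/2}$ on the inner region (so that the mass estimate~\eqref{mass-estimate} applies) and $|x|\le |x|^2 t^{-1/2}$ on the outer region (so that the second-moment estimate~\eqref{second-momentum} applies), which gives $M_1(t)\le (C_0+C_2)\,t^{1/2}/\log t$. You instead interpolate in one stroke via Cauchy--Schwarz, $M_1(t)\le M(t)^{1/2}M_2(t)^{1/2}$, which is legitimate since $u\ge0$ and both factors are finite for $t>0$ under~\eqref{eq:hypotheses.u0.1}--\eqref{eq:cond.second.momentum}. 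The two routes are equally elementary and give the same rate; yours avoids choosing a splitting radius and yields the (immaterial) slightly sharper constant $\sqrt{C_0C_2}$ in place of $C_0+C_2$, while the paper's truncation argument has the minor virtue of extending directly to situations where one wants to interpolate between moments whose orders do not sit symmetrically around the one being estimated. Your bookkeeping remark about enlarging $t_1$ so that both estimates hold simultaneously is fine (and in fact the paper's $t_1\ge t_0$ already ensures this).
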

\begin{proof}
Using~\eqref{mass-estimate} and~\eqref{second-momentum}, we get
$$
\begin{array}{l}
M_1(t)\le\displaystyle t^{1/2}\int_{|x|\le t^{1/2}}u(x,t)\,dx+\frac{1}{t^{1/2}}\int_{|x|\ge t^{1/2}}|x|^2u(x,t)\, dx\le \displaystyle \frac{C_0t^{1/2}}{\log t}+\frac{C_2 t^{1/2}}{\log t}.
\end{array}
$$
\end{proof}

\section{Outer limit}
\label{sect:outer.limit}
\setcounter{equation}{0}

We have now all the necessary tools to obtain the outer limit, Theorem~\ref{thm:outer}.
The first step is to determine the asymptotic value of the rescaled mass in terms of the initial condition.
\begin{prop}
Under the assumptions of Theorem~\ref{thm:main},
\begin{equation*}\label{final-mass}
\log t\int_{\mathbb{R}^2} u(x,t)\,dx\to 2M_\phi^*\quad\mbox{as }t\to\infty.
\end{equation*}
\end{prop}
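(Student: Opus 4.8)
The plan is to extract the result from the conservation law~\eqref{eq:conservation.law} --- already distilled into Corollary~\ref{cor:ltb.log.momentum} --- by controlling how the (small) mass of $u(\cdot,t)$ is distributed across spatial scales. Since $\phi-\log|\cdot|$ is bounded by~\eqref{eq:stationary.behavior} and $M(t)=O((\log t)^{-1})$, Corollary~\ref{cor:ltb.log.momentum} already gives $\int_{\R^2}u(x,t)\log|x|\,dx\to M^*_\phi$. Writing $\log|x|=\log\!\big(|x|t^{-1/2}\big)+\tfrac12\log t$, the identity $\int_{\R^2}u(x,t)\log|x|\,dx=\mathcal I(t)+\tfrac12\log t\,M(t)$, with
$$
\mathcal I(t):=\int_{\R^2}u(x,t)\,\log\frac{|x|}{\sqrt t}\,dx,
$$
shows that the statement reduces to proving $\mathcal I(t)\to0$ as $t\to\infty$; indeed this yields $\tfrac12\log t\,M(t)=\int_{\R^2}u(x,t)\log|x|\,dx-\mathcal I(t)\to M^*_\phi$, i.e. $\log t\,M(t)\to 2M^*_\phi$. (All the estimates cited below are available because the hypotheses of Theorem~\ref{thm:main} include~\eqref{eq:hypotheses.u0.1}--\eqref{eq:cond.second.momentum}.)

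To prove $\mathcal I(t)\to0$ I would fix the slowly growing cut-off $g(t)=\log t$ and split $\mathcal I=\mathcal I_1+\mathcal I_2+\mathcal I_3$, where $\mathcal I_1,\mathcal I_2,\mathcal I_3$ are the integrals over $\{|x|\le t^{1/2}/g(t)\}$, $\{t^{1/2}/g(t)\le|x|\le t^{1/2}g(t)\}$ and $\{|x|\ge t^{1/2}g(t)\}$ respectively. On the intermediate annulus $\big|\log(|x|t^{-1/2})\big|\le\log g(t)$, so $|\mathcal I_2|\le\log g(t)\,M(t)\le C\log\log t/\log t\to0$ by the mass bound~\eqref{mass-estimate}. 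On the far region $|x|t^{-1/2}\ge g(t)\ge1$ for $t$ large, hence $0\le\log(|x|t^{-1/2})\le|x|t^{-1/2}$, and therefore $0\le\mathcal I_3\le t^{-1/2}M_1(t)\le C(\log t)^{-1}\to0$ by the first-momentum bound~\eqref{first-momentum}; alternatively, using $\log s\le s^2$ and~\eqref{second-momentum} gives $0\le\mathcal I_3\le t^{-1}M_2(t)\le C(\log t)^{-1}$.

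The only genuinely delicate term is the near one, $\mathcal I_1=-\int_{\{|x|\le t^{1/2}/g(t)\}}u(x,t)\log\tfrac{\sqrt t}{|x|}\,dx\le0$. Since $\log(\sqrt t/|x|)$ can be as large as $\tfrac12\log t$ there, the plain bound $M(t)=O((\log t)^{-1})$ only yields $|\mathcal I_1|=O(1)$, which is useless: we must exploit that $u$ decays faster than $O((t\log t)^{-1})$ close to the hole. I would invoke Corollary~\ref{prop:improved size estimate}: fixing some $0<a<\a$, for $t$ large we have $|x|^2\le t/g(t)^2<2at$ on the near region, so $u(x,t)\le C(V_+(x)+b)\,(t(\log t)^2)^{-1}$ there. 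Because $V_+(x)=\log(|x|-r_0)+\gamma$ for $|x|\ge D$ and is bounded for $|x|\le D$, one has $\int_{\{|x|\le R\}}(V_+(x)+b)\,dx\le CR^2\log R$ for $R$ large; taking $R=t^{1/2}/g(t)$ gives
$$
\int_{\{|x|\le t^{1/2}/g(t)\}}u(x,t)\,dx\ \le\ \frac{C}{t(\log t)^2}\cdot\frac{t\log t}{g(t)^2}\ =\ \frac{C}{g(t)^2\log t},
$$
whence $|\mathcal I_1|\le\tfrac12\log t\cdot C\,(g(t)^2\log t)^{-1}=C/(2g(t)^2)\to0$. Combining the three estimates gives $\mathcal I(t)\to0$, hence $\log t\,M(t)\to2M^*_\phi$.

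I expect $\mathcal I_1$ to be the crux of the argument: it is precisely the near-origin region where the naive mass estimate is off by one logarithmic factor, so the proof is forced to appeal to the refined size estimate of Corollary~\ref{prop:improved size estimate} (which ultimately rests on the stationary super-solution $V_+$ built in Lemma~\ref{lema-supersolution}). Once $\mathcal I(t)\to0$ is established, the constant $2$ is transparent: essentially all the mass of $u(\cdot,t)$ sits at scale $|x|\approx t^{1/2}$, where $\log|x|\approx\tfrac12\log t$, so $\int_{\R^2}u(x,t)\log|x|\,dx\approx\tfrac12\log t\,M(t)$, and the left-hand side converges to $M^*_\phi$.
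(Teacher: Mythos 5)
Your proof is correct and follows the paper's strategy: reduce, via Corollary~\ref{cor:ltb.log.momentum}, to showing that $\log t\,M(t)$ differs from twice the logarithmic momentum by $o(1)$, and establish this by splitting $\mathbb{R}^2$ into three zones; your intermediate annulus is handled exactly as in the paper (mass decay \eqref{mass-estimate} times a $\log\log t$ weight), and your far zone via the first-momentum bound \eqref{first-momentum} is equivalent to the paper's use of the second momentum \eqref{second-momentum}, since the former is derived from the latter. The only real deviation is the near zone, and there your diagnosis is slightly off: you assert that the refined near-field bound of Corollary~\ref{prop:improved size estimate} is indispensable, but with your cutoff $|x|\le t^{1/2}/\log t$ the plain global rate \eqref{decay-rate}, $u\le C_1/(t\log t)$, already suffices, since the pointwise bound times the area $\pi t/(\log t)^2$ gives $\int_{\{|x|\le t^{1/2}/\log t\}}u(x,t)\,dx\le C/(\log t)^3$, hence $|\mathcal{I}_1|\le C/(\log t)^2$. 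This is essentially what the paper does on its (larger) inner region $\{|x|^2\le t/\log t\}$, where the same pointwise bound, the weight estimate $|\log(|x|^2/t)|\le C\log t$ (valid on the support of $u$ because $u=0$ on $B_2$ --- a point worth stating explicitly, as the weight is unbounded near the origin), and the area $\pi t/\log t$ give $O(1/\log t)$. So your appeal to Corollary~\ref{prop:improved size estimate} is harmless overkill rather than the crux; that refined estimate becomes genuinely necessary only later, in the proof of the outer limit (to control the boundary term coming from the hole) and in the inner limit.
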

The result follows immediately from Corollary~\ref{cor:ltb.log.momentum}, combined with the following lemma, that states that the scaled mass behaves for large times as twice the logarithmic momentum.
\begin{prop}
Under the assumptions of Theorem~\ref{thm:main},
$$
\lim_{t\to\infty}\log t\int_{\mathbb{R}^2} u(x,t)\,dx=\lim_{t\to\infty}2\int_{\mathbb{R}^2} u(x,t)\log|x|\,dx.
$$
\end{prop}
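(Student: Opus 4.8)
The plan is to reduce the equality of the two limits to a single statement. Writing
\[
\log t\int_{\R^2}u(x,t)\,dx-2\int_{\R^2}u(x,t)\log|x|\,dx=-2\int_{\R^2}u(x,t)\log\frac{|x|}{\sqrt t}\,dx ,
\]
and recalling that Corollary~\ref{cor:ltb.log.momentum} already gives $\int_{\R^2}u(\cdot,t)\log|x|\,dx\to M^*_\phi$, it suffices to prove
\[
I(t):=\int_{\R^2}u(x,t)\log\frac{|x|}{\sqrt t}\,dx\longrightarrow 0\quad\text{as }t\to\infty ;
\]
then both limits in the statement exist and are equal (to $2M^*_\phi$). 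Fix a constant $a_0\in\big(0,\min\{1/2,\a\}\big)$. I would bound $|I(t)|$ by splitting $\R^2$ into the outer region $\{|x|\ge\sqrt{2a_0t}\,\}$, the bulk $\{\mathcal{R}\le|x|\le\sqrt{2a_0t}\,\}$, and the neighbourhood of the hole $\{|x|\le\mathcal{R}\}$, using throughout that $u(\cdot,t)$ vanishes on $\H\supset B_2(0)$, so that $|x|\ge2$ on its support and $\log|x|$ never becomes singular.

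In the outer region, $\big|\log(|x|/\sqrt t)\big|\le C_{a_0}+|x|/\sqrt t$, so by the mass decay~\eqref{mass-estimate} and the first-momentum bound of Lemma~\ref{lema:first-momentum} --- the only place where hypothesis~\eqref{eq:cond.second.momentum} enters ---
\[
\int_{\{|x|\ge\sqrt{2a_0t}\}}u(x,t)\Big|\log\frac{|x|}{\sqrt t}\Big|\,dx\le C_{a_0}M(t)+\frac1{\sqrt t}\int_{\R^2}u(x,t)|x|\,dx\le\frac{C}{\log t}.
\]
In the neighbourhood of the hole, $\{2\le|x|\le\mathcal{R}\}$, I would invoke the sharp decay rate of Corollary~\ref{prop:improved size estimate}, $u(x,t)\le C\,(V_+(x)+b)\,(t(\log t)^2)^{-1}\le C\,(t(\log t)^2)^{-1}$, and combine it with $\big|\log(|x|/\sqrt t)\big|\le\tfrac12\log t$ to obtain a contribution $O\big((t\log t)^{-1}\big)$.

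The crux is the bulk region $\{\mathcal{R}\le|x|\le\sqrt{2a_0t}\,\}$, where $\log(|x|/\sqrt t)\le0$ but its absolute value may be as large as $\tfrac12\log t$; here the global bound~\eqref{decay-rate} is too crude and one must again use Corollary~\ref{prop:improved size estimate}. Since the super-solution of Lemma~\ref{lema-supersolution} satisfies $V_+(x)+b\le C\log(|x|+e)$ for $|x|\ge\mathcal{R}$, the Corollary gives $u(x,t)\le C\log(|x|+e)\,(t(\log t)^2)^{-1}$ on the whole bulk, whence, in polar coordinates, using $\log(|x|+e)\le C\log t$ there and $\int_0^{\sqrt t}\log(\sqrt t/r)\,r\,dr=t/4$,
\[
0\le\int_{\{\mathcal{R}\le|x|\le\sqrt{2a_0t}\}}u(x,t)\log\frac{\sqrt t}{|x|}\,dx\le\frac{C\log t}{t(\log t)^2}\int_{\{|x|\le\sqrt t\}}\log\frac{\sqrt t}{|x|}\,dx\le\frac{C}{\log t}.
\]
Adding the three contributions yields $|I(t)|\le C/\log t\to0$, which is the assertion. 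I expect this bulk estimate to be the only delicate point: it rests entirely on the improved decay $u=o\big((t\log t)^{-1}\big)$ inside parabolas provided by Corollary~\ref{prop:improved size estimate}, not on the mere global rate~\eqref{decay-rate}, while the outer and inner regions are handled by soft moment and size bounds.
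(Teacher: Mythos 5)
Your argument is correct, but it follows a genuinely different route from the paper's. Both proofs reduce the claim to showing $\int_{\R^2}u(x,t)\bigl(\log t-\log|x|^2\bigr)\,dx\to0$, but the paper splits $\R^2$ at the scales $|x|^2=t/\log t$ and $|x|^2=t\log t$: in $\{|x|^2\le t/\log t\}$ the crude global bound \eqref{decay-rate} suffices because the area of the region is only $O(t/\log t)$; in $\{t/\log t\le|x|^2\le t\log t\}$ the logarithmic factor is only $O(\log\log t)$, so the mass decay \eqref{mass-estimate} suffices; and in $\{|x|^2\ge t\log t\}$ both terms are killed separately by the second--momentum bound of Lemma~\ref{lema:second-momentum} (this is Lemma~\ref{lema:colas}). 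You instead split at $|x|=\mathcal{R}$ and $|x|^2=2a_0t$, and your bulk estimate leans on the refined near--field bound of Corollary~\ref{prop:improved size estimate} (via $V_++b\le C\log(|x|+e)$ and the explicit radial integral $\int_0^{\sqrt t}\log(\sqrt t/r)\,r\,dr=t/4$), while your outer region, starting already at scale $\sqrt t$, requires the first--momentum bound of Lemma~\ref{lema:first-momentum} rather than only the second moment. Both sets of tools are established in Section~\ref{sect:conservation.law} before this proposition, so there is no circularity; what the paper's choice of scales buys is that the proof needs nothing finer than \eqref{decay-rate}, at the price of the slightly worse rate $O(\log\log t/\log t)$, whereas your use of the heavier Corollary~\ref{prop:improved size estimate} handles the whole bulk in one stroke and gives the cleaner rate $O(1/\log t)$ (immaterial for the statement). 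Your verification that $2a_0<1$ makes $\log(\sqrt t/|x|)$ nonnegative on the bulk, justifying the enlargement of the domain of integration to the full disk, is the right small point to check, and it is in order.
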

To prove this proposition we consider separately the regions $\{|x|^2\le t\log t\}$ and $\{|x|^2\ge t\log t\}$.
\begin{lema}
Under the assumptions of Proposition~\ref{prop-mass-decay},
$$
\underbrace{\int_{\{|x|^2\le t\log t\}}u(x,t)\left(\log t-\log|x|^2\right)\,dx}_{\mathcal{A}(t)}=O\left(\frac{\log\log t}{\log t}\right)\quad\text{as }t\to\infty.
$$
\end{lema}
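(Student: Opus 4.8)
The plan is to split the region $\{|x|^2\le t\log t\}$ into the two pieces on which the weight $w(x,t):=\log t-\log|x|^2$ has a definite sign, namely $\{|x|^2\le t\}$ (where $w\ge0$) and $\{t\le|x|^2\le t\log t\}$ (where $w\le0$), and to estimate each of them by a pointwise bound for $u$ that is already at hand. A preliminary observation is that, since $B_2(0)\subset\H$ and $u$ vanishes identically on $\H$, the integrand is zero for $|x|<2$; hence we may work on $\{|x|\ge2\}$, which also disposes of the only place where $w$ is singular.

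On the inner region $\{|x|\ge 2,\ |x|^2\le t\}$ one has $0\le w(x,t)\le\log t$, so I would use the global decay rate $u(x,t)\le C_1/(t\log t)$ from~\eqref{decay-rate} and bound the corresponding contribution by
\[
\frac{C_1}{t\log t}\int_{B_{\sqrt t}(0)}\big(\log t-\log|x|^2\big)\,dx=\frac{C_1}{t\log t}\cdot\pi t=O\big((\log t)^{-1}\big),
\]
where the value $\pi t$ of the integral is a one-line computation in polar coordinates. On the outer region $\{t\le|x|^2\le t\log t\}$ the weight is negative but bounded, $-\log\log t\le w(x,t)\le 0$, so the contribution there is, in absolute value, at most $\log\log t\int_{\R^2}u(x,t)\,dx=\log\log t\,M(t)$, which is $O\big(\tfrac{\log\log t}{\log t}\big)$ by the mass estimate~\eqref{mass-estimate}. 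Combining the two bounds gives $|\mathcal{A}(t)|\le C\,\tfrac{\log\log t}{\log t}$.

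There is no real obstacle in this argument; the only point that must not be overlooked is that the factor $\log t-\log|x|^2$ — rather than simply $\log t$ — has to be kept inside the integral over $B_{\sqrt t}(0)$: after the scaling $x=\sqrt t\,y$ this factor becomes $-\log|y|^2$, with no residual $\log t$, and $\log|y|$ is integrable near the origin, so the integral grows only like $t$ and not like $t\log t$. Alternatively, on $\{|x|^2\le 2at\}$ with $0<a<\a$ fixed one may use the sharper near-field bound of Corollary~\ref{prop:improved size estimate} together with~\eqref{decay-rate} on the remaining annulus, with the same outcome; in any case the $\log\log t$ in the final estimate stems entirely from the outer region and is the best one can extract from the mass bound alone.
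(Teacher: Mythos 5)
Your proof is correct, and it rests on exactly the same two ingredients as the paper's: the pointwise decay $u\le C_1/(t\log t)$ of \eqref{decay-rate} near the origin and the mass bound \eqref{mass-estimate} with the observation that the weight is $O(\log\log t)$ farther out (which is where the $\log\log t$ comes from in both arguments). The only difference is the placement of the cut and the handling of the inner region: the paper splits at $|x|^2=t/\log t$, bounds the weight crudely by $C\log t$ on $\{4\le|x|^2\le t/\log t\}$, and lets the smaller area $\pi t/\log t$ of that ball supply the missing factor $1/\log t$, so the whole annulus $\{t/\log t\le |x|^2\le t\log t\}$ is then covered by the mass estimate; you instead split at $|x|^2=t$ and keep the weight inside the integral, using that $\int_{B_{\sqrt t}}\bigl(\log t-\log|x|^2\bigr)\,dx=\pi t$ exactly (the point, as you say, being the local integrability of $\log|y|$ after rescaling), which lets you treat the larger ball with the pointwise bound alone. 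Both variants are equally elementary and yield $|\mathcal A(t)|\le C/\log t+C\log\log t/\log t$; your extension of the inner integral to all of $B_{\sqrt t}(0)$ is legitimate since the weight is positive there and $u\equiv0$ on $\H\supset B_2(0)$, and the appeal to Corollary~\ref{prop:improved size estimate} is not needed, just as it is not in the paper.
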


\begin{proof} We have
$$
 |\mathcal A(t)|\le
 \underbrace{\int_{\left\{|x|^2\le \frac{t}{\log t}\right\}} u(x,t)\left|\log\frac{|x|^2}t\right|\,dx}_{\mathcal A_1(t)}
  +\underbrace{\int_{\left\{\frac{t}{\log t}\le|x|^2\le t\log t\right\}} u(x,t)\left|\log\frac{|x|^2}t\right|\,dx}_{\mathcal A_2(t)}.
$$
Notice that
$\left|\log\frac{|x|^2}t\right|\le C\log t$ in $\{4\le |x|^2\le t/\log t,\, t\ge \textrm e\}$. Therefore, since $u(x,t)=0$ for $|x|\le 2$,
 using the size estimate~\eqref{decay-rate} we get
$$
|\mathcal{A}_{1}(t)|\le \frac{C \log t}{t\log t}\int_{\left\{|x|^2\le \frac{t}{\log t}\right\}}\,dx=\frac{C}{\log t},\quad t\ge\max\{t_0,\textrm{e}\}.
$$
Using now that $\left|\log\frac{|x|^2}t\right|\le C\log\log t$ in $\{t/\log t\le |x|^2\le t\log t,\, t\ge \textrm e\}$, together with the decay of the mass~\eqref{mass-estimate}, we obtain
$$
|\mathcal{A}_2(t)|\le  C\log\log t\int_{\mathbb{R}^2} u(x,t)\,dx\le \frac{C\log\log t}{\log t},\quad t\ge\max\{t_0,\textrm{e}\}.
$$
\end{proof}

In the case of the second region, both the integral corresponding to the mass and the one corresponding to the logarithmic momentum converge separately to 0.
\begin{lema}
\label{lema:colas}
Under the assumptions of Lemma~\ref{lema:second-momentum},
$$
\begin{array}{l}
\displaystyle\log t\int_{\{|x|^2\ge t\log t\}}u(x,t)\,dx=O\left(\frac1{\log t}\right),\\[10pt]
\displaystyle\int_{\{|x|^2\ge t\log t\}}u(x,t)\log |x|^2\,dx=O\left(\frac1{\log t}\right),
\end{array}
\qquad \text{as }t\to\infty.
$$
\end{lema}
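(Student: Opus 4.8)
The two estimates are tail bounds on the region $\{|x|^2 \ge t\log t\}$, and both should follow from the moment estimates of Section~\ref{sect:conservation.law}, namely the second-momentum bound~\eqref{second-momentum} and (for a sharper trade-off) the mass estimate~\eqref{mass-estimate}. The basic mechanism is: on the tail region $|x|^2 \ge t\log t$, the weight $|x|^{-2}$ is at most $(t\log t)^{-1}$, so Chebyshev against $M_2(t)$ controls the mass of the tail, while an extra logarithmic factor can be extracted because $\log|x|^2$ grows only logarithmically in $|x|^2$.

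\medskip
\noindent\textbf{First estimate.} For the mass integral I would simply write, using $1 \le |x|^2/(t\log t)$ on the tail,
$$
\int_{\{|x|^2\ge t\log t\}} u(x,t)\,dx \le \frac{1}{t\log t}\int_{\mathbb{R}^2} u(x,t)|x|^2\,dx = \frac{M_2(t)}{t\log t}\le \frac{C_2}{(\log t)^2}
$$
for $t\ge t_1$, by Lemma~\ref{lema:second-momentum}. Multiplying by $\log t$ gives $O(1/\log t)$, as claimed.

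\medskip
\noindent\textbf{Second estimate.} For the logarithmic-momentum integral the naive bound $\log|x|^2 \le C|x|^2/(t\log t)\cdot(\text{something})$ is too lossy; instead split the tail dyadically or, more simply, use that on $\{|x|^2 \ge t\log t\}$ one has $\log|x|^2 \le C\,|x|^{2}\,(t\log t)^{-1/2}$... no — better: bound $\log|x|^2 \le |x|^{2\varepsilon}$ for $|x|$ large, which wastes a power; cleaner is to write
$$
\int_{\{|x|^2\ge t\log t\}} u(x,t)\log|x|^2\,dx \le \frac{1}{t\log t}\int_{\{|x|^2\ge t\log t\}} u(x,t)\,|x|^2\,\frac{\log|x|^2}{1}\Big/\Big(\frac{|x|^2}{t\log t}\Big)\,dx,
$$
which is circular. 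The honest route: decompose the tail as $\bigcup_{k\ge 0} A_k$ with $A_k = \{2^k t\log t \le |x|^2 < 2^{k+1} t\log t\}$. On $A_k$, $\log|x|^2 \le \log(2^{k+1}t\log t) \le C(k+\log t)$ and, by Chebyshev against $M_2$, $\int_{A_k} u(x,t)\,dx \le M_2(t)\,(2^k t\log t)^{-1} \le C_2\,2^{-k}(\log t)^{-2}$. Summing,
$$
\int_{\{|x|^2\ge t\log t\}} u(x,t)\log|x|^2\,dx \le \frac{C}{(\log t)^2}\sum_{k\ge 0} 2^{-k}(k+\log t) \le \frac{C}{(\log t)^2}\big(\log t + C'\big) = O\!\left(\frac1{\log t}\right).
$$

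\medskip
\noindent\textbf{Main obstacle.} The only subtlety is getting the logarithmic gain in the second estimate rather than merely $O(1)$: a single Chebyshev inequality with the weight $\log|x|^2$ peeled off at its worst-case value $\log(t\log t)\sim \log t$ already gives $O(1/\log t)$ directly, since $\log(t\log t)\cdot M_2(t)/(t\log t) = O(1/\log t)$ — so in fact the dyadic decomposition is not even needed, and one line suffices:
$$
\int_{\{|x|^2\ge t\log t\}} u(x,t)\log|x|^2\,dx \le \frac{1}{t\log t}\int_{\{|x|^2 \ge t\log t\}} u(x,t)|x|^2\log|x|^2\,dx,
$$
which still carries an unwanted $\log|x|^2$ inside. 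Hence the dyadic splitting (or equivalently an $\varepsilon$-power trick $\log|x|^2 \le C_\varepsilon |x|^{2\varepsilon}$ combined with a fractional moment bound) is the cleanest rigorous way to absorb that factor; I expect the write-up to use precisely this dyadic argument, and that is the one step deserving care.
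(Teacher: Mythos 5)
Your proof is correct and rests on the same key input as the paper, namely the second-momentum bound $M_2(t)\le C_2\,t/\log t$ of Lemma~\ref{lema:second-momentum}; for the first estimate your Chebyshev computation is exactly the paper's. For the second estimate, however, you take a genuinely different route: the paper gets it in one line by observing that $r\mapsto r/\log r$ is nondecreasing for $r\ge \mathrm{e}$, so on the tail $\log|x|^2\le |x|^2\,\log(t\log t)/(t\log t)$, and a single Chebyshev then yields $C_2\,\log(t\log t)/(\log t)^2=O(1/\log t)$; your dyadic decomposition $A_k=\{2^k t\log t\le |x|^2<2^{k+1}t\log t\}$ achieves the same absorption of the logarithm ($\log|x|^2\le C(k+\log t)$ on $A_k$, Chebyshev gives tail mass $\le C_2\,2^{-k}(\log t)^{-2}$, and the geometric sum converges), at the cost of a few extra lines but with no loss of rigor. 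Two small cautions on your side remarks, neither of which affects the dyadic proof you actually give: the claim that one may ``peel off $\log|x|^2$ at its worst-case value $\log(t\log t)$'' is not literally a pointwise bound, since $\log|x|^2\ge\log(t\log t)$ on the tail; the inequality you want, $\int_{\{|x|^2\ge t\log t\}}u\log|x|^2\le \frac{\log(t\log t)}{t\log t}\int u|x|^2$, is true but needs precisely the monotonicity of $r/\log r$ that the paper invokes. Also, the parenthetical $\varepsilon$-power alternative $\log|x|^2\le C_\varepsilon|x|^{2\varepsilon}$ would require a moment of order $2+2\varepsilon$, which is not available under the standing hypothesis~\eqref{eq:cond.second.momentum}, so that variant should be discarded.
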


\begin{proof}
First, using Lemma~\ref{lema:second-momentum},
$$
0\le\log t\int_{\{|x|^2\ge t\log t\}}u(x,t)\,dx\le\frac{\log t}{t\log t}\int_{\mathbb{R}^2} u(x,t)|x|^2\,dx
\le \frac{C_2}{\log t}\quad t\ge\max\{t_1,1\}.
$$

As for the second integral, we use that the function $r\to \frac r{\log r}$ is nondecreasing for $r\ge\textrm{e}$. Hence, taking  $t_2$ such that $t_2\log t_2=\textrm{e}^2$, and using again Lemma~\ref{lema:second-momentum}, we have
$$
\begin{array}{rcl}
\displaystyle\int_{\{|x|^2\ge t\log t\}}u(x,t)\log|x|^2\, dx&=&\displaystyle\int_{\{|x|^2\ge t\log t\}}u(x,t)\frac{|x|^2}{\frac{|x|^2}{\log |x|^2}}\,dx\le \frac{\log (t\log t)}{t\log t}\int_{\mathbb{R}^2} u(x,t)|x|^2\,dx\\[10pt]
&\le& \displaystyle {C_2}
\frac{\log (t\log t)}{t\log t}\frac t{\log t}\le \frac{C}{\log t},\quad t\ge\max\{t_1,t_2\}.
\end{array}
$$
\end{proof}
Once we have identified the limit of the rescaled mass, we can proceed to obtain the outer limit.
Let us remark that, although we are dealing with the outer region, the proof requires the refined bound in the inner region provided by~Corollary~\ref{prop:improved size estimate}.

The idea is that $\log t\, u(\cdot,t)\approx v(\cdot,t)$ for large times in the outer region, where
$v=v(x,s)$ is the solution to
\begin{equation}
\label{eq:def.v}
\partial_s v-\a\Delta v=0,\ x\in\R^2,\ s>\tau(t):=t/(\log t)^{1/2},\quad
v\big(x,\tau(t)\big)=\log \tau(t)u\big(x,\tau(t)\big), \ x\in \R^2.
\end{equation}
\begin{lema}
Under the assumptions of Theorem~\ref{thm:outer}, the solution $u$ to problem~\ref{problem} satisfies
\begin{equation*}
\label{eq:far-field.behavior.u.v}
\lim_{t\to\infty}t\sup\left\{\left|\log t\,u(x,t)-v(x,t) \right|:\,
|x|\ge \delta\sqrt t\right\}=0 \quad \text{ for  all }\delta>0,
\end{equation*}
where $v$ is the solution to~\eqref{eq:def.v}.
\end{lema}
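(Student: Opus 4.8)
The plan is to compare $\log t\, u(\cdot,t)$ with the solution $v$ of the local heat equation started at time $\tau(t)=t/(\log t)^{1/2}$ from the data $\log\tau(t)\,u(\cdot,\tau(t))$, and then show that over the \emph{short} time interval $[\tau(t),t]$ the two functions remain $o(1/t)$ apart in the far field, uniformly in $|x|\ge\delta\sqrt t$. The motivation for this choice of $\tau$ is that the relative increment of the logarithm is small, $\log t/\log\tau(t)\to1$, so that at the initial time the data for $v$ essentially coincides with $\log t\, u(\cdot,\tau(t))$ up to a factor $1+o(1)$, while the elapsed time $t-\tau(t)$ is large enough for the nonlocal operator $L$ to be well approximated by $\a\Delta$ but short enough that the error does not accumulate.

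First I would set $w(x,s):=\log t\, u(x,s)-v(x,s)$ on $[\tau(t),t]$ and write down the equation it satisfies: $\partial_s w-\a\Delta w = \log t\,(L-\a\Delta)u + \log t\,\mathcal{X}_{\H}(J*u)$ (the last term coming from the Dirichlet condition in the hole, which only matters on the bounded set $\H$), with initial data $w(\cdot,\tau(t))=\log t\, u(\cdot,\tau(t))-\log\tau(t)\,u(\cdot,\tau(t))=(\log t-\log\tau(t))\,u(\cdot,\tau(t))=\tfrac12\log\log t\,u(\cdot,\tau(t))$. Using the representation formula via the heat semigroup $e^{\a s\Delta}$, I would estimate each contribution to $w(x,t)$ at a far-field point $|x|\ge\delta\sqrt t$: (i) the initial-data term is controlled because $\log\log t$ times $u(\cdot,\tau(t))$ has $L^1$ norm $O(\log\log t/\log\tau(t))\cdot$ and, crucially, by Corollary~\ref{prop:improved size estimate} its mass is concentrated near the origin with the refined near-field decay, so the heat evolution of it at a far point at distance $\gtrsim\sqrt t$ decays faster than $1/t$; (ii) the commutator term $(L-\a\Delta)u$ is a fourth-order-in-space error, bounded using Taylor expansion of $J$ against $D^\beta$ of the heat kernel applied to $u$, again together with the sharp inner bound; (iii) the hole term is supported in $\H\subset B_{\mathcal{R}}(0)$, so its heat evolution at $|x|\ge\delta\sqrt t$ is Gaussian-small.

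The key technical point — and the main obstacle — is item (ii): controlling $\log t\int_{\tau(t)}^t e^{\a(t-s)\Delta}\big[(L-\a\Delta)u(\cdot,s)\big](x)\,ds$ uniformly for $|x|\ge\delta\sqrt t$ and showing it is $o(1/t)$. One writes $(L-\a\Delta)u(y,s)$ as a remainder involving $D^4_y u(\cdot,s)$ integrated against $J$; but $u$ is not known to be $C^4$ with good bounds, so instead I would move derivatives onto the heat kernel, writing the term as $\sum_{|\beta|=4}\int\int J(z) z^\beta D^\beta_x\Gamma_\a(x-y,t-s)\,u(y,s)\,\mathrm{d}y\,\mathrm{d}z$ times a constant, and then split the $y$-integral into the inner region $|y|^2\le 2as$ (where Corollary~\ref{prop:improved size estimate} gives $u(y,s)\lesssim \Gamma_a(y,s)(V_+(y)+b)/(s\log^2 s)$) and the complementary far region (where $u(y,s)\lesssim 1/(s\log s)$ by~\eqref{decay-rate} and $|y|$ large forces $|x-y|$ comparable to $|x|$ or $|y|$, making $D^\beta_x\Gamma_\a$ small). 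In the inner region one uses $|D^\beta_x\Gamma_\a(x-y,t-s)|\lesssim (t-s)^{-(4+2)/2}$ together with $\int u(y,s)\,\mathrm{d}y = O(1/\log s)$ to bound the inner contribution by $\log t\int_{\tau(t)}^t (t-s)^{-3}\,\mathrm{d}s\cdot O(1/\log s)$, which needs care near $s=t$; there one instead uses the pointwise far-field bound $|D^\beta_x\Gamma_\a(x-y,t-s)|\lesssim |x|^{-4-|\beta|}$ valid since $|x-y|\gtrsim|x|$ for $y$ in a bounded-in-$\sqrt t$ region, turning the singular time integral into $\int_{\tau(t)}^t (t-s)\,\mathrm{d}s\cdot |x|^{-8}\lesssim t^2\cdot t^{-4}=t^{-2}$, which is $o(1/t)$ even after multiplying by $\log t$. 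Assembling (i)–(iii) with these two complementary estimates gives $t\,\sup_{|x|\ge\delta\sqrt t}|w(x,t)|\to0$, which is the claim.
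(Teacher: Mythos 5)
Your scheme (Duhamel for the local heat semigroup on $[\tau(t),t]$ with forcing $\log t\,(L-\a\Delta)u$ plus a hole term) is genuinely different from the paper's, and it breaks down exactly at the step you call the main obstacle. Two problems. First, in the inner region your bound pairs the mass $M(s)=O(1/\log s)$ with the sup bound $|D^\beta_x\Gamma_\a(\cdot,t-s)|\lesssim (t-s)^{-3}$; even ignoring the singularity at $s=t$, say on $[\tau(t),t-1]$, this yields $\log t\cdot\frac{C}{\log t}\int_{\tau(t)}^{t-1}(t-s)^{-3}\,ds=O(1)$, whereas you need the whole commutator contribution to be $o(1/t)$ before the final multiplication by $t$. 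Taking absolute values inside the $y$-integral discards the essential convolution structure: since $u(y,s)\approx \tfrac{2M^*_\phi}{\log s}\Gamma_\a(y,s)$, the convolution of $u(\cdot,s)$ with $D^\beta\Gamma_\a(\cdot,t-s)$ has true size of order $t^{-3}$ (as for $\Gamma_\a(\cdot,s)*D^\beta\Gamma_\a(\cdot,t-s)=D^\beta\Gamma_\a(\cdot,t)$), not $(t-s)^{-3}$, and your estimate cannot see this gain. Second, your rescue near $s=t$ via $|D^\beta_x\Gamma_\a(x-y,t-s)|\lesssim |x|^{-4-|\beta|}$ presupposes $|x-y|\gtrsim|x|$, which is legitimate only when $y$ stays in a bounded set (as for the hole term, $y\in\H$). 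In the commutator term $y$ runs over all of $\R^2$, and the mass of $u(\cdot,s)$ sits at scale $|y|\sim\sqrt s\sim\sqrt t$, i.e.\ precisely at the scale of the far-field point $|x|\ge\delta\sqrt t$; your split $|y|^2\le 2as$ versus $|y|^2\ge 2as$ does not prevent $y$ from being arbitrarily close to $x$, where the kernel has full size $(t-s)^{-3}$ and the time integral diverges. Even the crude pairing $\|u(\cdot,s)\|_\infty\|(L-\a\Delta)\Gamma_\a(\cdot,r)\|_{L^1}\lesssim \frac{1}{s\log s}\,r^{-1}$ leaves a divergent integral at $r=t-s=0$; closing it would require quantitative higher-order smoothness of $u(\cdot,s)$, which you have not established. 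Two smaller points: the hole forcing enters with a minus sign ($\partial_s u=Lu-\chi_\H(J*u)$ on $\R^2$), and that term is not \emph{Gaussian}-small at $|x|\ge\delta\sqrt t$ (since $t-s$ can be of order $t$ the exponential is only a constant $e^{-c\delta^2}$); it is still fine, but only because of the refined bound of Corollary~\ref{prop:improved size estimate}, which gives $(J*u(\cdot,s))\lesssim 1/(s\log^2 s)$ on $\H$.

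The paper sidesteps all of this by never introducing the commutator: on $[\tau(t),t]$ it represents $u$ exactly through the fundamental solution of the \emph{nonlocal} operator, $F=e^{-t}\delta+W$, and compares kernels only once, at the single large elapsed time $t-\tau(t)\approx t$, using $|W-\Gamma_\a|(x,t-\tau(t))\le C(t-\tau(t))^{-3/2}$ from~\eqref{estima-W} together with $M(\tau(t))\le C/\log\tau(t)$; the hole contribution is handled with the pointwise bound $W(x,t)\le Ct/|x|^4$ from~\eqref{eq:pointwise.estimate.W} plus the refined inner estimate. If you wish to keep your local-Duhamel scheme, you would need to retain the semigroup/convolution structure in the commutator term (or prove pointwise bounds on four space derivatives of $u(\cdot,s)$ making the Taylor remainder integrable up to $s=t$); as written, the argument has a genuine gap.
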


\begin{proof}
Since $u$ is a solution to the Cauchy problem \eqref{eq:inhomogeneous} with right hand side   $f=-\chi_\H(J*u)$, and initial data $u(\cdot,\tau(t))$ at time $s=\tau(t)$, then
$$
\begin{array}{l}
t\big(\log t\, u(x,t)-v(x,t)\big)=\underbrace{t\big(\log t-\log \tau(t)\big)u(x,t)}_{\mathcal{A}(x,t)}+\underbrace{t\log \tau(t)\,\textrm{e}^{-(t-\tau(t))}u(x,\tau(t))}_{\mathcal B(x,t)}\\
\qquad\qquad\qquad+\underbrace{t\log \tau(t)\int_{\R^2} \big(W(x-y,t-\tau(t))-\Gamma_\a(x-y,t-\tau(t))\big)u(y,\tau(t))\,dy}_{\mathcal C(x,t)}\\
\qquad\qquad\qquad-\underbrace{t\log \tau(t)\int_{\tau(t)}^t e^{-(t-s)}\chi_\H(x)(J*u(\cdot,s))(x)\,ds}_{\mathcal D(x,t)}\\
\qquad\qquad\qquad-\underbrace{t\log \tau(t)\int_{\tau(t)}^t\int_\H W(x-y,t-s)(J*u(\cdot,s))(y)\,dy\,ds}_{\mathcal E(x,t)}.
\end{array}
$$
We have, using~\eqref{decay-rate}, the $L^\infty$ bound in~\eqref{eq:log.momentum.L.infty}, estimate~\eqref{estima-W} with $|\beta|=0$,  the bound for the mass~\eqref{mass-estimate}, and hypotheses~\eqref{hypotheses.H},
$$
\begin{array}{l}
|\mathcal A(x,t)| =\frac12\,t\log\log t\, u(x,t)\le C_1 \log\log t/(2\log t),\\[10pt]
|\mathcal B(x,t)|\le t\log \tau(t)\,\textrm{e}^{-(t-\tau(t))}\|u_0\|_{L^\infty(\mathbb{R}^2)},\\[10pt]
|\mathcal C(x,t)|\le C t\log (\tau(t))(t-\tau(t))^{-3 /2}M\big(\tau(t)\big)\le C t(t-\tau(t))^{-3/2},\\[10pt]
\mathcal D(x,t)=0\quad\mbox{if }|x|^2\ge \delta^2 t,\quad t\ge \mathcal{R}^2/\delta^2,
\end{array}
$$
which  implies the uniform convergence to 0 of all these terms as $t\to\infty$.
In order to check that also $\mathcal E$ converges to 0, we observe that $u(z,s)\le \frac C{s(\log s)^2}$ for $|z|\le \mathcal{R}+d$; see~\eqref{hypotheses.H} and~Corollary~\ref{prop:improved size estimate}.  Therefore, $(J*u(\cdot,s))\le \frac C{s(\log s)^2}$ in $\H$. Now, by using that $W(x,t)\le C t/|x|^4$, see~\eqref{eq:pointwise.estimate.W},  we get, for $|x|^2\ge\delta^2 t$ with $t$  large enough so that $|x-y|\ge \frac12|x|$ for every $y\in\H$,
$$
|\mathcal E(x,t)|\le t\log \tau(t)\frac C{|x|^4}\int_{\tau(t)}^t \frac{t-s}{s\log^2 s}\,ds\le \frac {Ct\log \tau(t)(t-\tau(t))^2}{\tau(t)(\log \tau(t))^2|x|^4}
\le  \frac{C_\delta t}{\tau(t)\log \tau(t)}\to0\quad\mbox{as }t\to\infty.
$$
\end{proof}

In view of Lemma~\ref{eq:far-field.behavior.u.v}, Theorem~\ref{thm:outer} will follow from the large time behavior for $v(\cdot,t)$ given next. Notice that the result is not immediate, since the initial data for $v$ moves with $t$.
\begin{lema}
Under the assumptions of Theorem~\ref{thm:outer}, the solution $v$ to~\eqref{eq:def.v} satisfies
\[
\lim_{t\to\infty} t\sup\{|v(x,t)-2M^*_\phi\Gamma_\a(x,t)|: x\in\R^2\}=0.
\]
\end{lema}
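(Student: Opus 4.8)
The plan is to write $v$ explicitly as the heat convolution with its moving initial datum at time $\tau=\tau(t)=t/(\log t)^{1/2}$, and to compare it with $2M^*_\phi\Gamma_\a(\cdot,t)$ term by term, exploiting that $\tau\ll t$ while $\log\tau\sim\log t$. Since $\log\tau\,u(\cdot,\tau)\in L^1(\R^2)\cap L^\infty(\R^2)$ by~\eqref{eq:log.momentum.L.infty}, the solution of~\eqref{eq:def.v} is
\[
v(x,t)=\log\tau\int_{\R^2}\Gamma_\a(x-y,t-\tau)\,u(y,\tau)\,dy,\qquad t>\tau .
\]
Because $\int_{\R^2}v(y,\tau)\,dy=\log\tau\,M(\tau)$, we first replace $2M^*_\phi$ by $\log\tau\,M(\tau)$: by the limit of the rescaled mass established above (which is where the second momentum bound~\eqref{second-momentum} enters), $|\log\tau\,M(\tau)-2M^*_\phi|=o(1)$, and since $t\,\Gamma_\a(x,t)=U_\a(x/\sqrt t)\le(4\pi\a)^{-1}$, this replacement changes $t\,|v(x,t)-2M^*_\phi\Gamma_\a(x,t)|$ by $o(1)$ uniformly in $x$. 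It therefore suffices to prove
\[
t\log\tau\;\sup_{x\in\R^2}\Big|\int_{\R^2}\big(\Gamma_\a(x-y,t-\tau)-\Gamma_\a(x,t)\big)\,u(y,\tau)\,dy\Big|\longrightarrow 0 .
\]

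\noindent I would then split $\Gamma_\a(x-y,t-\tau)-\Gamma_\a(x,t)=\big(\Gamma_\a(x-y,t-\tau)-\Gamma_\a(x,t-\tau)\big)+\big(\Gamma_\a(x,t-\tau)-\Gamma_\a(x,t)\big)$ into a spatial-shift part and a time-shift part, and estimate each using only the \emph{uniform-in-$x$} Gaussian derivative bounds $|\nabla\Gamma_\a(\cdot,s)|\le Cs^{-3/2}$ and $|\partial_s\Gamma_\a(\cdot,s)|=\a|\Delta\Gamma_\a(\cdot,s)|\le Cs^{-2}$. For the spatial part, the Mean Value Theorem gives $|\Gamma_\a(x-y,t-\tau)-\Gamma_\a(x,t-\tau)|\le C|y|(t-\tau)^{-3/2}$; integrating against $u(\cdot,\tau)$ produces the first momentum $M_1(\tau)$, and $\log\tau\,M_1(\tau)\le C\tau^{1/2}$ by~\eqref{first-momentum}, so this contribution is bounded by $C\,t\,\tau^{1/2}(t-\tau)^{-3/2}$. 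For the time part, $|\Gamma_\a(x,t-\tau)-\Gamma_\a(x,t)|\le\int_{t-\tau}^{t}|\partial_s\Gamma_\a(x,s)|\,ds\le C\tau(t-\tau)^{-2}$; integrating against $u(\cdot,\tau)$ produces the mass $M(\tau)$, and $\log\tau\,M(\tau)\le C$ by~\eqref{mass-estimate}, so this contribution is bounded by $C\,t\,\tau(t-\tau)^{-2}$.

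\noindent Since $\tau/t=(\log t)^{-1/2}\to0$, one has $t-\tau\ge t/2$ for large $t$, and the two bounds become $C(\tau/t)^{1/2}=C(\log t)^{-1/4}$ and $C(\tau/t)=C(\log t)^{-1/2}$, both tending to $0$ uniformly in $x$; together with the previous lemma this yields the statement and hence Theorem~\ref{thm:outer}. The main point is not any single estimate but the calibration of scales: $\tau(t)=t(\log t)^{-1/2}$ is chosen precisely so that the parabolic ``freezing'' errors, once weighted by the required factor $t\log\tau\sim t\log t$, still vanish — a slower choice of $\tau(t)$ would spoil $\log\tau\sim\log t$ in the mass limit, a faster one would spoil $\tau/t\to0$ here — and one must genuinely use the global-in-$x$ bounds for $\nabla\Gamma_\a$ and $\partial_t\Gamma_\a$, since $x$ ranges over all of $\R^2$ and the bare pointwise Gaussian decay near its peak is not enough on its own.
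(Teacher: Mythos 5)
Your proof is correct, and it takes a genuinely different route from the paper's. The paper handles the main term in Fourier space: it writes $\hat v(\xi,t)=\hat v(\xi,\tau(t))\,\mathrm{e}^{-4\pi^2\a(t-\tau(t))|\xi|^2}$, subtracts $\hat v(0,\tau(t))=\log\tau(t)\,M(\tau(t))$, and splits frequencies at an auxiliary radius $R(t)=t^{-1/2}(\log t)^{\alpha}$, $0<\alpha<1/4$; the low-frequency oscillation of $\hat v(\cdot,\tau)$ is controlled by the Mean Value Theorem together with the first-momentum bound \eqref{first-momentum}, and the high-frequency tail by the mass bound \eqref{mass-estimate}, after which it performs the same time-shift replacement $\Gamma_\a(\cdot,t-\tau)\to\Gamma_\a(\cdot,t)$ and mass replacement $\log\tau\,M(\tau)\to 2M^*_\phi$ that you do. You instead stay in physical space: the convolution representation plus the uniform bounds $\sup_x|\nabla\Gamma_\a(x,s)|\le Cs^{-3/2}$ and $\sup_x|\partial_s\Gamma_\a(x,s)|\le Cs^{-2}$ (valid in dimension two) turn the spatial-shift error into $C(t-\tau)^{-3/2}M_1(\tau)$ and the time-shift error into $C\tau(t-\tau)^{-2}M(\tau)$, and the same three quantitative inputs (rescaled-mass limit, \eqref{mass-estimate}, \eqref{first-momentum}) close the argument with the rate $(\log t)^{-1/4}$. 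The two proofs are thus equivalent in substance — both exploit the calibration $\tau(t)=t(\log t)^{-1/2}$, i.e. $\log\tau\sim\log t$ while $\tau/t\to0$ — but yours dispenses with the Fourier inversion and the choice of the intermediate radius $R(t)$, so it is somewhat more elementary and self-contained, while the paper's Fourier split is the more standard template for such moving-initial-data arguments and generalizes readily when only weaker moment information is available.
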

\begin{proof}
We have
$$
\begin{array}{rcl}
t\left|v(x,t)-2M^*_\phi\Gamma_\a(x,t)\right|&=& \underbrace{t\left|v(x,t)-\log \tau(t)M\big(\tau(t)\big)
\Gamma_\a(x,t-\tau(t))\right|}_{\mathcal{A}(x,t)}\\[10pt]
&&+\underbrace{t\log \tau(t)M\big(\tau(t)\big)\left|\Gamma_\a(x,t-\tau(t))-\Gamma_\a(x,t)\right|}_{\mathcal{B}(x,t)}\\[10pt]
&&+\underbrace{t\left|\Gamma_\a(x,t)\right|\,\left|\log \tau(t)M\big(\tau(t)\big)-2M^*_\phi\right|}_{\mathcal{C}(x,t)}.
\end{array}
$$
Notice that $t\Gamma_\a(x,t)\in L^\infty(\R^2)$, and
$$
\lim_{t\to\infty}t\sup\{|\Gamma_\a(x,t)-\Gamma_\a(x,t-\tau(t))|: x\in\R^2\}=0.
$$
Hence,
Propositions~\ref{prop-mass-decay} and~\ref{final-mass} imply, respectively, that $\mathcal{B}$ and $\mathcal{C}$ converge, uniformly in $x$, to 0 as $t\to\infty$.  Therefore, since $t-\tau(t)\ge t/2$ if $t$ is large enough, the result  will follow if we prove that
$$
\lim_{t\to\infty}(t-\tau(t)) \sup\left\{\left|v(x,t)-\log \tau(t)M\big(\tau(t)\big)
\Gamma_\a(x,t-\tau(t))\right|: x\in\mathbb{R}^2\right\}=0.
$$
This is what we do next.

Since $v$ is a solution to~\eqref{eq:def.v}, its Fourier transform satisfies
$$
\hat v(\xi,s)=\hat v(\xi, \tau(t))\textrm{e}^{-4\pi^2q(s-\tau(t))|\xi|^2}.
$$
Moreover, $\hat v(0,\tau(t))=\log \tau(t)M\big(\tau(t)\big)$. Therefore, performing the change of variables $\eta =(s-\tau(t))^{1/2} \xi$, we have, for some function $R(t)$ that will be conveniently chosen later,
$$
\begin{array}{l}
(s-\tau(t)) \Big|v(x,s)-\log \tau(t)M\big(\tau(t)\big)
\Gamma_\a(x,s-\tau(t))\Big|\\[10pt]
\displaystyle \qquad\qquad \le (s-\tau(t)) \int_{\R^2}\Big|(\hat v(\xi,\tau(t))-\hat v(0,\tau(t)))\textrm{e}^{-4\pi^2q(s-\tau(t))|\xi|^2}\Big|\,d\xi\\[10pt]
\displaystyle \qquad\qquad \le \sup_{|\xi|\le R(t)}|\hat v(\xi,\tau(t))-\hat v(0,\tau(t))|\int_{\R^N}\textrm{e}^{-4\pi^2q|\eta|^2}\,d\eta\\[10pt]
\displaystyle \qquad\qquad \quad
+2\log \tau(t) M\big(\tau(t)\big)\int_{|\eta|\ge(s-\tau(t))^{1/2} R(t)}\textrm{e}^{-4\pi^2q|\eta|^2}\,d\eta.
\end{array}
$$
On the other hand, using the Mean Value Theorem and~\eqref{first-momentum}, we obtain that
$$
\displaystyle\sup_{|\xi|\le R(t)}|\hat v(\xi,\tau(t))-\hat v(0,\tau(t))|\le \displaystyle2R(t)\log\tau(t)\int_{\R^2} |x| |u(x,\tau(t))|\, dx\le C R(t) (\tau(t))^{1/2}.
$$
Thus,  taking  $s=t$, and considering Proposition~\ref{prop-mass-decay}, we arrive at
$$
\begin{array}{l}
\displaystyle(t-\tau(t)) \left|v(x,t)-\log \tau(t)M\big(\tau(t)\big)
\Gamma_\a(x,t-\tau(t))\right|\\[10pt]
\displaystyle\qquad\qquad \le C R(t)(\tau(t))^{1/2}+ C\int_{|\eta|\ge(t-\tau(t))^{1/2}R(t)}\textrm{e}^{-4\pi^2q|\eta|^2}\,d\eta\to0 \quad\text{as } t\to\infty,
\end{array}
$$
if $R(t)(\tau(t))^{1/2}\to0$ and $(t-\tau(t))^{1/2} R(t)\to\infty$ as $t\to\infty$. These two conditions are fulfilled if,
for example,
$$
R(t)=t^{-1/2}(\log t)^\alpha, \quad 0<\alpha<1/4.
$$
\end{proof}

\section{Inner limit}\label{section:inner limit}
\label{sect:inner limit}
\setcounter{equation}{0}

The goal of this section is to complete the proof of
Theorem~\ref{thm:main}.
In view of Theorem~\ref{thm:outer}, what is left  is to show that the limit~\eqref{eq:main.result} is
valid uniformly in an inner set of the form $\{|x|^2<\delta t\}$ for some
$\delta>0$.  Since $\Big|\phi(x)/\log|x|\Big|\le C$ in $\mathbb{R}^2\setminus\H$, this will follow from the next result, if we take into account~\eqref{estima-W} with $|\beta|=0$.
\begin{teo}
\label{thm:inner.W} Under the assumptions of Theorem~\ref{thm:main}, for every $0<a<\a$ we have
\begin{equation}
\label{eq:inner.W}
\lim_{t\to\infty}t(\log t)^2\sup\left\{\frac{1}{\log|x|}\left|
u(x,t)-4M^*_\phi\frac{\phi(x)W(x,t)}{(\log t)^2}\right|: x\in\R^2\setminus\H,\ |x|^2\le 2at \right\}=0.
\end{equation}
\end{teo}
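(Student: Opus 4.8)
The plan is to prove \eqref{eq:inner.W} by a comparison argument around the candidate profile
\[
\Phi(x,t):=\frac{4M^*_\phi\,\phi(x)\,W(x,t)}{(\log t)^2}
\]
appearing in the statement (we may assume $M^*_\phi>0$, the case $u_0\equiv0$ being trivial; note also $0\le\phi(x)/\log|x|\le C$ on $\R^2\setminus\H$ by \eqref{eq:stationary.behavior}, so the weight $1/\log|x|$ is harmless). The first step is to compute the defect of $\Phi$. Using the identity $L(\phi W)=\phi\,LW+W\,L\phi+\mathcal I$ with
\[
\mathcal I(x,t):=\int_{\R^2}J(x-y)\bigl(\phi(y)-\phi(x)\bigr)\bigl(W(y,t)-W(x,t)\bigr)\,dy,
\]
together with $L\phi=0$ in $\R^2\setminus\H$ and the equation \eqref{eq-W} for $W$, one obtains, for $x\in\R^2\setminus\H$,
\[
(\partial_t\Phi-L\Phi)(x,t)=-\frac{8M^*_\phi\,\phi(x)\,W(x,t)}{t(\log t)^3}+\frac{4M^*_\phi}{(\log t)^2}\Bigl(\phi(x)\,e^{-t}J(x)-\mathcal I(x,t)\Bigr).
\]
The term with $e^{-t}J$ is exponentially small and compactly supported. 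The key point is that $\mathcal I$ is far smaller than one would naively expect: combining \eqref{eq:differences.phi}, which gives $|\phi(y)-\phi(x)|\le C/|x|$ for $|x-y|<d$, with \eqref{estima-W} for $|\beta|=1$, which gives $|\nabla W(z,t)|\le|\nabla\Gamma_\a(z,t)|+Ct^{-2}\le C(|z|+1)t^{-2}$ (since $|\nabla\Gamma_\a(z,t)|=\tfrac{|z|}{2\a t}\Gamma_\a(z,t)\le\tfrac{|z|}{8\pi\a^2 t^2}$), the Mean Value Theorem yields $|\mathcal I(x,t)|\le C\,t^{-2}$ for $x\in\R^2\setminus\H$, $|x|^2\le 2at$, $t$ large.

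Given a small $\ep>0$, I would then build a super-solution $\bar U_\ep=(1+\ep)\Phi+\ep Z_\ep$ and a sub-solution $\underline U_\ep=(1-\ep)\Phi-\ep Z_\ep$ of \eqref{problem}, valid in the moving region $\Omega_t=\{x\in\R^2\setminus\H:|x|^2<2at\}$ for $t\ge T_\ep$, where $Z_\ep\ge0$ is a corrector of \emph{strictly lower order} than $\Phi$ — concretely $Z_\ep=o\!\bigl(\log|x|/(t(\log t)^2)\bigr)$ uniformly in $\Omega_t$ for each fixed $\ep$ — whose defect $\partial_tZ_\ep-LZ_\ep$ nevertheless dominates $\tfrac{\phi W}{t(\log t)^3}+\tfrac{|\mathcal I|}{(\log t)^2}+e^{-t}$ in $\Omega_t$, and which is large enough at $t=T_\ep$ and on $\R^2\setminus\Omega_t$ to order the comparison data. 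I expect $Z_\ep$ to be patched from two pieces furnished by Sections~\ref{sect:stationary.problem}--\ref{sect:conservation.law}. Near $\H$ and at moderate scales $|x|\lesssim(t/\log t)^{1/3}$ one uses the time-weighted stationary super-solution $\tfrac{A_\ep(V_+(x)+b)}{t(\log t)^3}$, whose defect is $\gtrsim\tfrac{A_\ep\kappa}{|x|^3\,t(\log t)^3}$ by $-LV_+\ge\kappa/|x|^3$ — large compared with the defect error $\lesssim t^{-2}(\log t)^{-2}$ there — with $V_+$ the super-solution of Lemma~\ref{lema-supersolution} (the variant $w_\nu^+$ of Lemma~\ref{stationary-super2} helps to optimize the intermediate range). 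For the outer part of the inner region, $(t/\log t)^{1/3}\lesssim|x|^2\le2at$, one uses the evolutionary super-solution of Lemma~\ref{lema:super1} with exponent $\gamma=3$, namely $\tfrac{B_\ep\Gamma_a(x,t)(V_+(x)+b)}{(\log t)^3}$, whose defect is $\gtrsim\tfrac{B_\ep\Gamma_a(V_++b)}{t(\log t)^3}\gtrsim\tfrac{B_\ep\log|x|}{t^2(\log t)^3}$, again larger than the error once $\log|x|$ is a fixed fraction of $\log t$. One takes $0<a<\a$, the constants $A_\ep\ll B_\ep$ large (depending on $\ep$ and $T_\ep$), and adds a further piece majorizing $u$ on $\R^2\setminus\Omega_t$ (available from Corollary~\ref{prop:improved size estimate}, used with some $a<a'<\a$, together with the outer limit). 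All these pieces are $o(\Phi)$ in $\Omega_t$, smaller by at least a factor $\log t$.

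With $\bar U_\ep,\underline U_\ep$ in hand, run the comparison on $\Omega_t$ for $t\ge T_\ep$. On $\H$, $\Phi=0$ and $Z_\ep\ge0$, so $\bar U_\ep\ge0=u\ge\underline U_\ep$. On $\{|x|^2\ge2at\}$ — including the $d$-wide nonlocal collar that protrudes into it — one invokes the already proved outer limit, Theorem~\ref{thm:outer}: there $u(x,t)=\tfrac{2M^*_\phi}{\log t}\Gamma_\a(x,t)(1+o(1))$, while \eqref{estima-W} gives $W(x,t)=\Gamma_\a(x,t)(1+o(1))$ and \eqref{eq:stationary.behavior} gives $\phi(x)=\log|x|+O(1)=\tfrac12\log t\,(1+o(1))$ for $|x|^2\approx2at$; hence $\Phi(x,t)=u(x,t)(1+o(1))$ there, and together with the corrector and the far-tail majorization one gets $\underline U_\ep\le u\le\bar U_\ep$ on $\R^2\setminus\Omega_t$ for $t\ge T_\ep$. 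At $t=T_\ep$, Corollary~\ref{prop:improved size estimate} gives $u(\cdot,T_\ep)\le\bar U_\ep(\cdot,T_\ep)$, and with $Z_\ep$ chosen large enough at $T_\ep$ one has $\underline U_\ep(\cdot,T_\ep)\le0\le u(\cdot,T_\ep)$. The (routine, moving-domain) comparison principle for \eqref{problem} then yields $\underline U_\ep\le u\le\bar U_\ep$ in $\Omega_t$ for all $t\ge T_\ep$, so $|u-\Phi|\le\ep\Phi+\ep Z_\ep$ in $\{|x|^2\le2at\}$ for $t$ large. Therefore
\[
\frac{t(\log t)^2}{\log|x|}\,\bigl|u(x,t)-\Phi(x,t)\bigr|\le\ep\,\frac{4M^*_\phi\,\phi(x)\,tW(x,t)}{\log|x|}+\ep\,\frac{t(\log t)^2\,Z_\ep(x,t)}{\log|x|},
\]
where the first term is $\le C\ep$ by \eqref{decaimiento-W} (so $tW\le C$) and $\phi/\log|x|\le C$, and the second tends to $0$ as $t\to\infty$ because $Z_\ep=o(\log|x|/(t(\log t)^2))$. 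Letting $t\to\infty$ and then $\ep\to0$ gives \eqref{eq:inner.W}.

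The crux — and the step I expect to cause the most trouble — is the construction of the corrector $Z_\ep$: being $o(\Phi)$ while still having a defect large enough to absorb $\tfrac{\phi W}{t(\log t)^3}$ and, above all, the nonlocal error $\mathcal I$ are competing demands, and since $\mathcal I$ stays of size $\sim t^{-2}$ with essentially no spatial decay across the whole inner region, whereas the stationary super-solutions of Section~\ref{sect:stationary.problem} produce a defect that does decay in $|x|$, one is forced into a multi-scale patching of two (or more) correctors and must exploit the refined bound on $\mathcal I$ coming from \eqref{estima-W}; arranging that the regions of validity of the pieces overlap, and that all pieces remain $o(\Phi)$, is the delicate part. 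A secondary, more routine matter is the comparison principle on the moving domain $\Omega_t$ and the matching of the profiles along its lateral boundary, for which Theorem~\ref{thm:outer} and Corollary~\ref{prop:improved size estimate} supply precisely the two-sided control required.
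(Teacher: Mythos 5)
Your proposal is correct and follows essentially the same route as the paper: the same defect computation for $4M^*_\phi\phi(x)W(x,t)/(\log t)^2$ (with the nonlocal cross term controlled by combining \eqref{eq:differences.phi} with \eqref{estima-W}), a two-piece corrector built from a time-weighted stationary super-solution of Section~\ref{sect:stationary.problem} plus the evolutionary super-solution of Lemma~\ref{lema:super1}, and comparison in $\{|x|^2\le 2at\}$ using Theorem~\ref{thm:outer} on the lateral collar and Corollary~\ref{prop:improved size estimate} at the initial time. The only deviation is cosmetic: you weight $V_+$ by $1/(t(\log t)^3)$ and take $\gamma=3$ with large $\ep$-dependent constants, while the paper uses $w_\nu^+/(t(\log t)^2)$ and $\gamma\in(2,3)$ with $0<\nu<3-\gamma$; both choices yield the required super- and sub-solutions.
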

The advantage of this formulation in terms of $W$ is that  it is more straightforward to apply the nonlocal operator $L$ to $W$ than to $\Gamma_\a$.

In order to prove~\eqref{eq:inner.W} we perform a comparison argument in $\{|x|^2\le 2at\}$ with suitable barriers~$\omega_\ep^\pm$ which are fast enough $\ep$-close to the asymptotic limit as $t\to\infty$,
\begin{equation}
\label{eq:def.omega}
\omega_\ep^\pm(x,t)=\underbrace{4M^*_\phi\frac{\phi(x)W(x,t)}{(\log t)^2}}_{v(x,t)}\pm R_\ep(x,t), \quad R_\ep\ge0,\quad \lim_{t\to\infty}t(\log t)^2\sup_{x\in\R^2}\left|\frac{R_\ep(x,t)}{\log|x|}\right|\le \ep.
\end{equation}

We start by estimating how far is $v$ from being a solution, since this will be needed to prove that $\omega^+_\ep$ and $\omega^-_\ep$ are respectively a super- and a sub-solution.
\begin{lema} There exists a constant $C>0$ such that
\[|\partial_t v-Lv|(x,t)\le \frac C{t^2(\log t)^2}\quad\mbox{if } x\in\mathbb{R}^2\setminus\H, \quad |x|^2\le 2at,\  t>1.
\]
\end{lema}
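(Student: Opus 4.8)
The function $v(x,t)=4M^*_\phi\,\phi(x)W(x,t)/(\log t)^2$ is a product of three factors, and the quantity $\partial_t v-Lv$ measures the failure of $v$ to be a caloric function for the nonlocal operator. The natural first step is to expand $\partial_t v-Lv$ by the Leibniz-type rule for $L$ applied to a product $\phi(x)W(x,t)$, which generates, besides the terms $\phi\,(\partial_t W-LW)$ and $-W\,L\phi$, a cross term of the form $\int_{\R^2}J(x-y)\big(\phi(y)-\phi(x)\big)\big(W(y,t)-W(x,t)\big)\,dy$, together with the time derivative hitting the $(\log t)^{-2}$ prefactor. Since $L\phi=0$ in $\R^2\setminus\H$ by~\eqref{eq:stationary} and $|x|^2\le 2at$ keeps us away from $\H$ for $t$ large, the $-W\,L\phi$ term does not contribute. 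The term coming from $\partial_t W-LW=e^{-t}J(x)$ (see~\eqref{eq-W}) vanishes identically once $|x|\ge d$, which it is for $|x|^2\le 2at$ with $t$ large; more precisely it is exponentially small and harmless.

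\medskip

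\noindent The estimates I would assemble are the following. First, $|\phi(x)|\le C\log(2+|x|)\le C\log t$ in the region $|x|^2\le 2at$, by~\eqref{eq:stationary.behavior}. Second, by~\eqref{eq:differences.phi}, $|\phi(y)-\phi(x)|\le C/|x|$ whenever $|x-y|<d$, and by Lemma~\ref{lemma:estimates.derivatives.phi}, $|\nabla\phi|\le C/|x|$. Third, for $W$ and its first derivatives I would use~\eqref{decaimiento-W}: $|W(x,t)|\le Ct^{-1}$ and $|\nabla_x W(x,t)|\le Ct^{-3/2}$; hence the Mean Value Theorem gives $|W(y,t)-W(x,t)|\le Ct^{-3/2}$ for $|x-y|<d$. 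Multiplying the cross term by $(\log t)^{-2}$ yields a bound $C t^{-3/2}(|x|\log^2 t)^{-1}\le C t^{-2}(\log t)^{-2}$ once $|x|\ge t^{1/2}/\log t$; in the complementary small ball $|x|\le t^{1/2}/\log t$ one uses instead the cruder bound $|\phi(y)-\phi(x)|\le C$ (local boundedness of $\phi$) together with $|W(y,t)-W(x,t)|\le Ct^{-3/2}$, and the same target bound follows. The term where $\partial_t$ hits $(\log t)^{-2}$ produces $C\,t^{-1}\,\phi(x)\,(\log t)^{-3}\le C t^{-1}(\log t)^{-2}$, which is \emph{too large} by a factor $t$; so this term must be grouped with $\phi\cdot\partial_t W$ rather than discarded, using that $\partial_t W-LW$ is what is small, not $\partial_t W$ itself.

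\medskip

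\noindent Here lies the real point, and the step I expect to be the main obstacle: one cannot treat $\phi\,\partial_t W$ and the prefactor derivative separately, because each is only $O(t^{-1})$, not $O(t^{-2})$. Instead one must write
$\partial_t v-Lv=\frac{4M^*_\phi}{(\log t)^2}\,\phi(x)\,(\partial_t W-LW)(x,t)
-\frac{8M^*_\phi}{t(\log t)^3}\,\phi(x)\,W(x,t)
-\frac{4M^*_\phi}{(\log t)^2}\,W(x,t)\,L\phi(x)
-\frac{4M^*_\phi}{(\log t)^2}\,\mathcal{A}(x,t)$,
where $\mathcal A$ is the cross integral above. The first term is $O(e^{-t})$ since $\partial_t W-LW=e^{-t}J$ and $J(x)=0$ for $|x|\ge d$. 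The third term is $0$ in $\R^2\setminus\H$. For the second term, one must absorb it: since $\phi(x)|W(x,t)|\le C\log t\cdot t^{-1}$, it is bounded by $C/(t(\log t)^2)$ — but that is \emph{not} $O(t^{-2}(\log t)^{-2})$. The resolution is that the statement to be proved is exactly $|\partial_t v-Lv|\le C t^{-2}(\log t)^{-2}$, so the term $\frac{8M^*_\phi\,\phi W}{t(\log t)^3}$ is in fact \emph{not} admissible unless we use a sharper bound on $W$: namely the Gaussian-type bound $|W(x,t)|\le Ct^{-1}e^{-c|x|^2/t}$ valid in $|x|^2\le 2at$, which still only saves a constant. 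Hence I would need the refined statement $|W(x,t)|\le C\Gamma_\a(x,t)$ together with $\Gamma_\a(x,t)\le C/t$; this alone does not suffice, so the genuine mechanism must be that in the region $|x|^2\le 2at$ one has the extra smallness $|W(x,t)|\le C/(t\log t)$? — this is false in general. Therefore I expect the correct argument uses $|\phi(x)|\le C\log|x|$ with $|x|\le (2at)^{1/2}$ and factors the $\log$ against the $(\log t)^{-3}$: $\phi(x)/(\log t)^{3}\le C\log t/(\log t)^3=C/(\log t)^2$, giving $|{\rm 2nd\ term}|\le C|W(x,t)|/(t(\log t)^2)\le C/(t^2(\log t)^2)$, using $|W|\le C/t$. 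That closes it. So the key subtlety — and the main obstacle — is recognizing that the logarithmic growth of $\phi$ in $|x|^2\le 2at$ is precisely compensated by one of the factors $1/\log t$, turning the apparently borderline term into an admissible one; the rest is routine application of~\eqref{decaimiento-W}, \eqref{eq:differences.phi}, Lemma~\ref{lemma:estimates.derivatives.phi}, and $L\phi=0$.
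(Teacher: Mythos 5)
Your decomposition is the same as the paper's: $\partial_t v-Lv$ is split into $\phi(\partial_tW-LW)/(\log t)^2$ (harmless because $\partial_tW-LW=e^{-t}J$ by \eqref{eq-W}), $W L\phi/(\log t)^2$ (zero in $\R^2\setminus\H$), the prefactor term $-2\phi W/(t(\log t)^3)$, and the cross integral $\mathcal{A}(x,t)=\int J(x-y)\big(\phi(y)-\phi(x)\big)\big(W(y,t)-W(x,t)\big)\,dy$ divided by $(\log t)^2$. Your final treatment of the prefactor term — $|\phi(x)|\le C\log|x|\le C\log t$ for $|x|^2\le 2at$, paired with $|W|\le C/t$ from \eqref{decaimiento-W}, so that one $\log t$ is absorbed by the $(\log t)^{-3}$ — is exactly the paper's step, so that part is fine despite the hesitations.

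The genuine gap is your estimate of the cross term. Using only the Mean Value Theorem with $|\nabla_x W|\le Ct^{-3/2}$ you get $|\mathcal{A}(x,t)|\le Ct^{-3/2}/|x|$, and near the hole (where $|x|$ is of order one, which is allowed in the set $|x|^2\le 2at$) your fallback bound is $|\mathcal{A}|\le Ct^{-3/2}$; after dividing by $(\log t)^2$ this is $Ct^{-3/2}(\log t)^{-2}$, which misses the claimed $Ct^{-2}(\log t)^{-2}$ by a factor $t^{1/2}$, so the lemma does not follow. (Even in your outer split $|x|\ge t^{1/2}/\log t$ the arithmetic gives only $Ct^{-2}(\log t)^{-1}$, a factor $\log t$ short.) The bound $|\nabla W|\le Ct^{-3/2}$ is sharp, so no rebalancing of the splitting can rescue this route. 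The missing idea, which is where the paper's proof does its real work, is to expand $W(y,t)-W(x,t)$ to \emph{second} order and to write $\nabla W(x,t)=\nabla\Gamma_\a(x,t)+\big(\nabla W-\nabla\Gamma_\a\big)(x,t)$: the difference is $O(t^{-2})$ by \eqref{estima-W} with $|\beta|=1$, the second-order remainder is $O(t^{-2})$ by \eqref{decaimiento-W} with $|\beta|=2$, and the leading term satisfies $|\nabla\Gamma_\a(x,t)|\le C|x|/t^{2}$, whose factor $|x|$ is cancelled precisely by the $C/|x|$ from \eqref{eq:differences.phi}. This gives $|\mathcal{A}(x,t)|\le C/t^{2}$ uniformly for $x\in\R^2\setminus\H$, which is what the statement needs; without this refinement your argument fails in the inner part of the region.
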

\begin{proof} A straightforward computation shows that, for $x\in\mathbb{R}^2\setminus\H$,
$$
\begin{array}{rcl}
\displaystyle\frac1{4M_\phi^*}(\partial_t v-Lv)(x,t)&=&\displaystyle
\frac{(\partial_t W-LW)(x,t)\phi(x)}{(\log t)^2}-\frac{2\phi(x)W(x,t)}{t(\log t)^3}\\[10pt]
&&+\displaystyle\frac1{(\log t)^2}\underbrace{\int_{\mathbb{R}^2} J(x-y)(W(y,t)-W(x,t))(\phi(y)-\phi(x))\,dy}_{\mathcal{A}(x,t)}.
\end{array}
$$
In order to estimate the integral term, we write it as
$$
\begin{array}{rcl}
\mathcal{A}(x,t)&=&\displaystyle\nabla\Gamma_\a(x,t)\cdot\int_{\mathbb{R}^2} J(x-y)(y-x)(\phi(y)-\phi(x))\,dy\\[10pt]
&&\displaystyle+\left(\nabla W(x,t)-\nabla\Gamma_\a(x,t)\right)\cdot\int_{\mathbb{R}^2} J(x-y)(y-x)(\phi(y)-\phi(x))\,dy\\[10pt]
&&\displaystyle+\frac12\sum_{|\beta|=2}\int_{\mathbb{R}^2} J(x-y)D_x^\beta W(\xi,t)(y-x)^\beta(\phi(y)-\phi(x))\,dy.
\end{array}
$$
Estimating the factors involving derivatives of $W$ by~\eqref{estima-W} with $|\beta|=1$ and~\eqref{decaimiento-W} with $|\beta|=2$, the gradient of $\Gamma_\a$ by $|\nabla \Gamma_\a(x,t)|\le C |x|/t^2$,
and the difference of the values of $\phi$ at $y$ and $x$ by~\eqref{eq:differences.phi}, we get that
$\left|\mathcal{A}(x,t)\right|\le C/{t^2}$ if $x\in\mathbb{R}^2\setminus\H$. Therefore, since $W$ is a solution to problem~\eqref{eq-W},  we conclude that, for  $x\in\mathbb{R}^2\setminus\H$, $|x|^2\le 2at$,
\[
\begin{aligned}
|\partial_t v-Lv|(x,t)&\le 4M^*_\phi\left(\frac{\textrm{e}^{-t}J(x)\phi(x)}{(\log t)^2}+2\frac {\phi(x)W(x,t)}{t(\log t)^3}+\frac C{t^2(\log t)^2}\right)\\
&\le C\left(\frac{\textrm{e}^{-t}}{(\log t)^2}+\frac{\big|\log|x|\big|}{t^2(\log t)^3}+\frac 1{t^2(\log t)^2}\right)\le \frac C{t^2(\log t)^2}.
\end{aligned}
\]
\end{proof}

We now turn our attention to the correction term $R_\ep$. Notice that, in addition to having the required decay property specified in~\eqref{eq:def.omega}, $R_\ep$ should be such that $\omega^+_\ep$ and $\omega^-_\ep$ are respectively a super- and a sub-solution. To this aim, we need $\partial_t R_\ep -L R_\ep$ to be strictly positive, and with a slower decay than  $|\partial_t v-Lv|$.

Guided by our experience with the non-critical cases $N\ne 2$, our first attempt is to take a function in separated variables. A good try is to pick $\nu\in(0,1)$, and take
\begin{equation*}\label{super2}
R_\ep=\frac{\ep}{C_\nu}\underbrace{\frac{w^+_\nu(x)}{t(\log t)^2}}_{w_\nu(x,t)},\qquad C_\nu=\sup_{x\in \mathbb{R}^2\setminus\H}\left|\frac{w_\nu^+(x)}{\log|x|}\right|,
\end{equation*}
where $w^+_\nu$ is the super-solution to the stationary problem~\eqref{eq:stationary} given by Lemma~\ref{stationary-super2} for this value of $\nu$. Notice that this choice leads to the right decay for $R_\ep$. But we still have to check whether $\omega^+_\ep$ is a super-solution. Hence, we have to compute $\partial_t w_\nu-Lw_\nu$.

Using~\eqref{eq:stationary.super.2}, we get, for a certain constant $\kappa$,
\begin{equation}
\label{eq:expression.first.try}
(\partial_t w_\nu-Lw_\nu)(x,t)=-\frac{w_\nu^+(x)}{t^2(\log t)^2 }(1+2(\log t)^{-1})+\frac{\kappa}{|x|^2(\log|x|)^{2-\nu}\,t\,(\log t)^2}.
\end{equation}
Therefore,  using again~\eqref{eq:stationary.super.2}, we obtain that  there is a constant $\sigma>0$ such that
$$
(\partial_t w_\nu-Lw_\nu)(x,t)\ge \frac{\left(\kappa t-\sigma|x|^2(\log|x|)^2\right)}{|x|^2(\log |x|)^{2-\nu}\,t^2\,(\log t)^2},\quad x\in\mathbb{R}^2\setminus\H,\ t\text{ large enough}.
$$
Since $|x|\log|x|\le (\delta t)^{1/2}$ if $|x|\le \frac{(\delta t)^{1/2}}{\log t}$ and $t\ge \max\{\delta,\textrm{e}\}$, taking $\delta=\kappa/(2\sigma)$ we finally obtain, for some large enough $T$,
$$
(\partial_t w_\nu-Lw_\nu)(x,t)\ge\frac{\kappa }{2\delta t^2(\log t)^{2-\nu} },\quad x\in\mathbb{R}^2\setminus\H,\
|x|^2\le\frac{\delta t}{(\log t)^2},\ t\ge T,
$$
and we are done in this region. Unfortunately, we can not even guarantee that $w_\nu$ is as super-solution in the set $\left\{\frac{\delta t}{(\log t)^2}\le |x|^2\le 2at\right\}$, hence the need to look for an alternative.

A second possibility is to take $R_\ep=KV$, where $V$ is the super-solution that was constructed,  for a fixed $\gamma>2$, in~Lemma~\ref{lema:super1}, and $K$ is a positive  constant to be fixed. This choice also has the right decay specified in~\eqref{eq:def.omega}. Moreover,  we see from~\eqref{eq:estimate.evolutionary.supersolution}, that
$$
\partial_t V-LV\ge\frac{C}{t^2(\log t)^{\gamma-1}},\quad|x|^2\ge\frac{\delta t}{(\log t)^2},\ t\ge T.
$$
Hence, if $\gamma\in(0,3)$, this alternative works in this region. However, our estimate from below for $\partial_t V-L V$ decays too fast in the set $\left\{x\in\mathbb{R}^2\setminus\H:|x|^2\le \frac{\delta t}{(\log t)^2}\right\}$, and is not able to control the contribution of $v$ in order to make $\omega^+_\ep$ a super-solution there.

What will work is a combination of the two possibilities that we have considered,
$$
R_\ep=R_{\ep,K}(x,t)=\ep w_\nu(x,t) +K V(x,t),\quad K\ge1, \ \ep>0.
$$
 Indeed, since $V$ is a super-solution, by the very same computation performed above we have
$$
\partial_t \omega^+_\ep-L\omega^+_\ep\ge\frac{\kappa }{2\delta t^2(\log t)^{2-\nu} },\quad x\in\mathbb{R}^2\setminus\H,\
|x|^2\le\frac{\delta t}{(\log t)^2},\ t\ge T.
$$
On the other hand, we see from~\eqref{eq:expression.first.try} and~\eqref{eq:stationary.super.2} that, for some large enough $T$,
\[
|\partial_t w_\nu-Lw_\nu|(x,t)\le \frac{C}{t^2(\log t)^{2-\nu}}, \quad x\in\mathbb{R}^2\setminus\H,\ \frac{\delta^2 t}{(\log t)^2}\le |x|^2\le 2at,\ t\ge T.
\]
Thus, using~\eqref{eq:estimate.evolutionary.supersolution}, we see that there is   a large enough time $T>0$ such that, for every $K\ge 1$,
\[\begin{aligned}
\partial_t\omega^+_\ep-L\omega^+_\ep&\ge \left(\frac{C\log|x|}{t^2(\log t)^\gamma }-\frac C{t^2(\log t)^2}-\frac{\ep C}{t^2(\log t)^{2-\nu}}\right),\quad x\in\mathbb{R}^2\setminus\H:|x|^2\le 2a t,\ t\ge T.
\end{aligned}
\]
Therefore, if we first choose $2<\gamma<3$, and then $0<\nu<3-\gamma$, we finally obtain that $\omega^+_\ep$ is a super-solution in  $\{x\in\mathbb{R}^2\setminus\H:|x|^2\le 2a t,\ t\ge T_\ep\}$ for some $T_\ep>0$ large enough.
Analogously, for every $\ep>0$ and $K\ge 1$, $\omega^-_\ep$ is a sub-solution in the same set.

\begin{proof}[Proof of Theorem~\ref{thm:inner.W}. ]
Let  $\ep>0$. The outer behavior, Theorem~\ref{thm:outer}, together with~\eqref{estima-W} with $|\beta|=0$, implies that there exists a time $t_\ep$ such that
\[
t(\log t)\Big|u(x,t)-2M^*_\phi\frac{W(x,t)}{\log t}\Big|<2\ep,\qquad 2at\le |x|^2\le 4at, t\ge  t_\ep.
\]
Since $\frac{\log|x|}{\log t}\to \frac12$ and $\frac{\phi(x)}{\log |x|}\to 1$ uniformly in
$\{2at\le |x|^2\le 4at\}$ as $t\to\infty$,
\[\Big|u(x,t)-4M^*_\phi\frac{\phi(x)W(x,t)}{(\log t)^2}\Big|
\le4 \ep \frac{\log |x|}{t(\log t)^2}\le C\ep 4M^*_\phi\frac{\phi(x)W(x,t)}{(\log t)^2}\quad\mbox{if }2at\le|x|^2\le 4at,\ t>\bar t_\ep.
\]
Thus,  if $\ep$ is small,
\begin{equation*}\label{boundary}
(1-C\ep)(v(x,t)-R_{\ep,K}(x,t))\le u(x,t)\le (1+C\ep)(v(x,t)+R_{\ep,K}(x,t)),\quad 2at\le|x|^2\le 4at,\ t>\bar t_\ep,
\end{equation*}
since $R_\ep=R_{\ep,K}\ge0$. On the other hand, if we take $K\ge 1$ large enough,
\[
(1-C\ep)(v-R_{\ep,K})(x,\bar t_\ep)\le u(x,\bar t_\ep)\le (1+C\ep) (v+R_{\ep,K})(x,\bar t_\ep),\quad x\in\mathbb{R}^2\setminus\H,\  |x|^2\le 4a\bar t_\ep.
\]
Hence,
by the comparison principle we get,
\[
(1-C\ep)(v-R_{\ep,K})\le u\le (1+C\ep)(v+R_{\ep,K})\quad  x\in\mathbb{R}^2\setminus\H,\  |x|^2\le 2at,\  t\ge \bar t_\ep.
\]
Therefore, using the decay properties of $R_{\ep,K}$, we conclude that
\[ \limsup_{t\to\infty}t(\log t)^2\sup\left\{\frac{1}{\log |x|}\left|
u(x,t)-v(x,t)\right|: x\in\R^2\setminus\H,\ |x|^2\le 2at\right\}\le \widetilde C\ep.
\]
Since $\ep$ is arbitrary,  we get the desired result.
\end{proof}

\end{document}